\setlist{nolistsep}
  \theoremstyle{plain}
\newtheorem{proposition}{Proposition}[section]
\newtheorem{corollary}[proposition]{Corollary}
  \theoremstyle{remark}
\newtheorem{remark}[proposition]{Remark}
  \theoremstyle{definition}
\newtheorem{definition}[proposition]{Definition}
\newtheorem{notation}[proposition]{Notation}
\newtheorem{convention}[proposition]{Convention}
\newtheorem{question}[proposition]{Question}
\newtheorem{lemma}[proposition]{Lemma}
\newtheorem{example}[proposition]{Example}
\newcommand*{\wlhd}{\mathrel{\ensuremath{\widetilde{\lhd}}}}
\newcommand{\op}{\diamond}
\newcommand{\NN}{\mathbb{N}}
\newcommand{\ZZ}{\mathbb{Z}}
\newcommand{\RR}{\mathbb{R}}
\newcommand{\C}{\mathcal C}
\newcommand{\PP}{\mathcal P}
\renewcommand{\S}{\mathcal S}
\newcommand{\BW}{\mathcal{W}}
\newcommand{\ww}{\omega}
\newcommand{\w}{\chi}
\newcommand{\wl}{\lambda}
\newcommand{\wlinv}{\rotatebox[origin=c]{180}{\reflectbox{\ensuremath{\lambda}}}}
\newcommand{\f}{\varphi}
\renewcommand{\b}{\varepsilon}
\newcommand{\ob}{\widetilde{\b}}
\newcommand{\gen}{\theta}
\newcommand{\rrho}{\tau}
\newcommand{\sq}{\varsigma}
\newcommand{\D}{D}
\newcommand{\A}{\mathscr A}
\newcommand{\B}{\mathscr B}
\renewcommand{\Col}{\mathscr C}
\newcommand{\ColIs}{\Col^{iso}}
\newcommand{\FP}{Fix}
\newcommand{\St}{Stab}
\newcommand{\SF}{SF}
\newcommand{\ASF}{ABF}
\newcommand{\asf}{\chi}
\newcommand{\X}{X}
\newcommand{\XX}{\mathscr X}
\renewcommand{\L}{\Lambda}
\newcommand{\LL}{\mathscr L}
\renewcommand{\SS}{S}
\renewcommand{\P}{P}
\newcommand{\Q}{Q}
\newcommand{\G}{G}
\newcommand{\Cuff}{C}
\newcommand{\bw}{{\color{white}b}}
\newcommand{\Hom}{\operatorname{Hom}}
\newcommand{\Aut}{\operatorname{Aut}}
\newcommand{\Id}{\operatorname{Id}}
\renewcommand{\le}{\leqslant}
\renewcommand{\ge}{\geqslant}
\newcommand*\rcircled[1]{$\,$\tikz[baseline=(char.base)]{
            \node[shape=circle,draw,inner sep=1pt] (char) {#1};}}
\begin{document}

\title{Qualgebras and knotted $3$-valent graphs}

\author{Victoria Lebed \\ \itshape lebed.victoria@gmail.com}

\maketitle

\begin{abstract}
\footnotesize This paper is devoted to qualgebras and squandles, which are quandles enriched with a compatible binary/unary operation. Algebraically, they are modeled after groups with conjugation and multiplication/squaring operations. Topologically, qualgebras emerge as an algebraic counterpart of knotted $3$-valent graphs, just like quandles can be seen as an ``algebraization'' of knots; squandles in turn simplify the qualgebra algebraization of graphs. Knotted $3$-valent graph invariants are constructed by counting qualgebra/squandle colorings of graph diagrams, and are further enhanced using qualgebra/squandle $2$-cocycles. Some algebraic properties and the beginning of a cohomology theory are given for both structures. A classification of size $4$ qualgebras/squandles is presented, and their second cohomology groups are completely described.
\end{abstract}

{\bf Keywords:} {\footnotesize quandles; knotted $3$-valent graphs; qualgebras; squandles; colorings; counting invariants; Boltzmann weight; cocycle invariants; qualgebra cohomology.}


\section{Introduction}\label{sec:intro}

A \textit{quandle} is a set~$\Q$ endowed with two binary operations~$\lhd$ and~$\wlhd$ satisfying the following axioms:
\begin{align}
&\text{R}\mathrm{III} &\text{\textit{self-distributivity}: }\qquad\qquad & (a\lhd b)\lhd c = (a\lhd c)\lhd(b\lhd c), \label{E:SD}\tag{$Q_{SD}$}\\
&\text{R}\mathrm{II}  &\text{\textit{invertibility}: }\qquad\qquad & (a\lhd b){\wlhd} b = (a{\wlhd} b)\lhd b = a,\label{E:Inv}\tag{$Q_{Inv}$}\\
&\text{R}\mathrm{I} &\text{\textit{idempotence}: }\qquad\qquad & a\lhd a = a. \label{E:Idem}\tag{$Q_{Idem}$}
\end{align}
Since operation~$\wlhd$ can be deduced from~$\lhd$ using~\eqref{E:Inv}, we shall often omit it from the definition. Originating from the work of topologists D.Joyce and S.Matveev \cite{Joyce,Matveev}, this structure can be seen as an \textbf{algebraic counterpart of knots}. Indeed, consider colorings of the arcs of knot diagrams by elements of~$\Q$, according to the rule on Figure~\ref{pic:Colorings}\rcircled{A}. This coloring rule is compatible with Reidemeister moves (Figure~\ref{pic:RMoves}) if and only if Axioms \eqref{E:SD}-\eqref{E:Idem} are verified, each axiom corresponding to the Reidemeister move indicated in the left column above. Thus the number of diagram colorings by a fixed quandle defines an invariant of underlying knots and links. This invariant can be strengthened by endowing each colored crossing --- and hence, summing everything together, each diagram coloring --- with a weight (Figure~\ref{pic:Quandles}). The weights are calculated using a \textit{quandle $2$-cocycle} of~$\Q$ according to a procedure suggested by Carter-Jelsovsky-Kamada-Langford-Saito (\cite{QuandleHom}).
\begin{center}
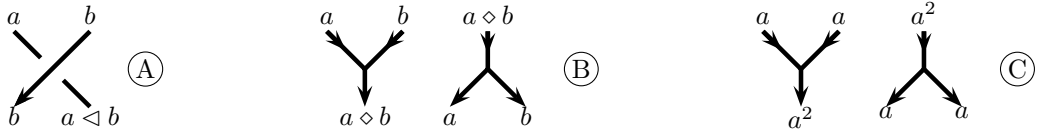

\begin{pspicture}(40,15)
\psline[linewidth=0.6](0,10)(10,0)
\psline[linewidth=0.6,border=1.8,arrowsize=2]{->}(10,10)(0,0)
\rput[b](0,11){$a$}
\rput[b](10,11){$b$}
\rput[t](0,0){$b$}
\rput[t](10,0){$a \lhd b$}
\rput(17,5){\rcircled{A}}
\end{pspicture}
\begin{pspicture}(15,15)
\psline[linewidth=0.6,arrowsize=2]{->}(0,10)(2.5,7.5)
\psline[linewidth=0.6,arrowsize=2]{->}(0,10)(5,5)(5,0)
\psline[linewidth=0.6,arrowsize=2]{->}(10,10)(7.5,7.5)
\psline[linewidth=0.6](10,10)(5,5)
\rput[b](0,11){$a$}
\rput[b](10,11){$b$}
\rput[t](5,0){$a \op b$}
\end{pspicture}
\begin{pspicture}(40,15)
\psline[linewidth=0.6,arrowsize=2]{->}(5,10)(5,7)
\psline[linewidth=0.6,arrowsize=2]{->}(5,10)(5,5)(0,0)
\psline[linewidth=0.6,arrowsize=2]{->}(5,5)(10,0)
\rput[b](5,11){$a \op b$}
\rput[t](0,0){$\bw a \bw$}
\rput[t](10,0){$b$}
\rput(17,5){\rcircled{B}}
\end{pspicture}
\begin{pspicture}(15,15)
\psline[linewidth=0.6,arrowsize=2]{->}(0,10)(2.5,7.5)
\psline[linewidth=0.6,arrowsize=2]{->}(0,10)(5,5)(5,0)
\psline[linewidth=0.6,arrowsize=2]{->}(10,10)(7.5,7.5)
\psline[linewidth=0.6](10,10)(5,5)
\rput[b](0,11){$a$}
\rput[b](10,11){$a$}
\rput[t](5,0){$a^2$}
\end{pspicture}
\begin{pspicture}(20,15)
\psline[linewidth=0.6,arrowsize=2]{->}(5,10)(5,7)
\psline[linewidth=0.6,arrowsize=2]{->}(5,10)(5,5)(0,0)
\psline[linewidth=0.6,arrowsize=2]{->}(5,5)(10,0)
\rput[b](5,11){$a^2$}
\rput[t](0,0){$a$}
\rput[t](10,0){$a$}
\rput(17,5){\rcircled{C}}
\end{pspicture}
\medskip
\captionof{figure}{Colorings by quandles, qualgebras and squandles}
\label{pic:Colorings}
\end{center}
From the algebraic viewpoint, the quandle structure can be regarded as an \textbf{axiomatization of the conjugation operation} in a group. Concretely, a group with the conjugation operation $a \lhd b = b^{-1}ab$ is always a quandle, and all the properties of conjugation that hold in every group are consequences of \eqref{E:SD}-\eqref{E:Idem}.

\newpage %
The purpose of this paper is to find an \textbf{algebraic counterpart of knotted $3$-valent graphs} (further simply called \textit{graphs} for brevity) which would develop the quandle ideas. To this end,  we introduce the \textit{qualgebra} structure. It is a quandle $(\Q, \lhd)$ endowed with an additional binary operation~$\op$ satisfying
\begin{align}
&\text{R}\mathrm{IV} &\text{\textit{translation composability}: }\qquad\qquad &a\lhd (b\op c) = (a\lhd b)\lhd c,\label{E:QA1}\tag{$QA_{Comp}$}\\
&\text{R}\mathrm{VI}&\text{\textit{distributivity}: }\qquad\qquad &(a\op b)\lhd c = (a\lhd c) \op (b\lhd c),\label{E:QA2}\tag{$QA_D$}\\
&\text{R}\mathrm{V}&\text{\textit{semi-commutativity}: }\qquad\qquad &a\op b = b \op (a\lhd b).\label{E:QAComm}\tag{$QA_{Comm}$}
\end{align}
Restricting oneself to \textit{well-oriented} graphs (i.e., having only zip and unzip vertices, cf. Figure~\ref{pic:Zip}) and extending the quandle coloring rules~\ref{pic:Colorings}\rcircled{A} to $3$-valent vertices as shown on Figure~\ref{pic:Colorings}\rcircled{B}, one gets rules compatible with Reidemeister moves for graphs (Figure~\ref{pic:RMovesGraphs}) if and only if Axioms \eqref{E:QA1}-\eqref{E:QAComm} are satisfied, each axiom corresponding to the Reidemeister move indicated on the left. Imitating what was done for quandle colorings of knots, one can thus define qualgebra \textit{counting invariants} for graphs. The latter can be upgraded to \textit{weight invariants} using the qualgebra $2$-cocycles introduced in this work. \textit{Qualgebra $2$-cocycles} consist of two maps, one of which is used for putting weights on crossings, and the other one for putting weights on $3$-valent vertices (Figures~\ref{pic:Quandles} and~\ref{pic:BoltzmannGraph}); the weight of a colored diagram is obtained, as usual, by summing everything together.

A group with the conjugation quandle operation becomes a qualgebra with the group multiplication as additional operation: $a \op b = ab$. Algebraically, the additional qualgebra axioms encode \textbf{the relations between conjugation and multiplication operations} in a group (see Table~\ref{tab:3levels}). Note that, however, our qualgebra axioms do not imply any of those used in the standard definition of a group. In particular, we shall give examples of $4$-element qualgebras for which the operation~$\op$ is non-cancellative, non-associative, and has no neutral element.

Besides defining qualgebras and constructing counting and weight invariants of graphs out of them, in this work we study some basic properties of qualgebras; give a complete classification of $4$-element qualgebras (showing that a single quandle can be the base of numerous qualgebra structures with significantly different properties); and suggest the beginning of a qualgebra cohomology theory, calculating in particular the second cohomology group for $4$-element qualgebras. Moreover, we compute certain qualgebra counting and weight invariants for some pairs of graphs, showing that these graphs can be distinguished using our methods.

In parallel with the qualgebra structure, we study the closely related \textit{squandle} structure. It is defined as a quandle $(\Q, \lhd)$ endowed with an additional unary operation~$a \mapsto a^2$, obeying the following axioms (modeled after the properties of conjugation and squaring operations in a group): 
\begin{align}
&\text{R}\mathrm{IV} & \qquad a\lhd b^2 &= (a\lhd b)\lhd b, \qquad\qquad\qquad \label{E:SQA1}\tag{$SQ_1$}\\
&\text{R}\mathrm{VI} & a^2\lhd b &= (a\lhd b)^2.\label{E:SQA2}\tag{$SQ_2$}
\end{align}
A qualgebra with the squaring operation $a^2 = a \op a$ is an example of squandle. The coloring rule from Figure~\ref{pic:Colorings}\rcircled{C} allows to construct invariants of graphs by counting squandle colorings of their diagrams; weight invariants are obtained with the help of \textit{squandle $2$-cocycles}.

The terms ``qualgebra'' and ``squandle'' both come from the names of the two operations participating in the definition of these structures, zipped together as indicated on Figure~\ref{pic:Term}.
\begin{center}
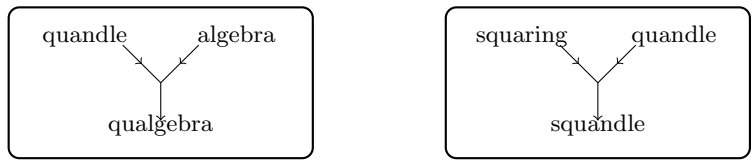

\begin{tikzpicture}
\draw [rounded corners,thick]  (-1.5,-0.5) -- (-1.5,-1.5) -- (2.5,-1.5) -- (2.5,0.5) -- (-1.5,0.5) -- (-1.5,-0.5);
\draw [->] (0,0)--(0.25,-0.25);
\draw [->] (0.25,-0.25)--(0.5,-0.5)--(0.5,-1);
\draw [->] (1,0)--(0.75,-0.25);
\draw (0.75,-0.25)--(0.5,-0.5);
\node at (0.5,-0.8) [below]  {\small {qualgebra}};
\node at (-0.5,-0.15) [above]  {\small {quandle}};
\node at (1.5,-0.15) [above]  {\small {algebra}};
\node at (4,-0.15) {};
\end{tikzpicture}
\begin{tikzpicture}
\draw [rounded corners,thick]  (-1.5,-0.5) -- (-1.5,-1.5) -- (2.5,-1.5) -- (2.5,0.5) -- (-1.5,0.5) -- (-1.5,-0.5);
\draw [->] (0,0)--(0.25,-0.25);
\draw [->] (0.25,-0.25)--(0.5,-0.5)--(0.5,-1);
\draw [->] (1,0)--(0.75,-0.25);
\draw (0.75,-0.25)--(0.5,-0.5);
\node at (0.5,-0.8) [below]  {\small {squandle}};
\node at (-0.5,-0.15) [above]  {\small {squaring}};
\node at (1.5,-0.15) [above]  {\small {quandle}};
\end{tikzpicture}
\captionof{figure}{The terms ``qualgebra'' and ``squandle''} \label{pic:Term}
\end{center}

The paper is organized as follows. The language of colorings, used throughout this paper, is developed in Section~\ref{sec:Colorings}. It is illustrated with the famous example of quandle colorings of knot diagrams, from which some of our further constructions take inspiration. We then turn to invariants of graphs which extend the quandle invariants of knots. In Section~\ref{sec:GraphInvar}, after a brief survey of such extensions found in the literature, we propose an original one based on qualgebra colorings. Our invariants are defined for well-oriented graphs only, but they are shown to induce invariants of unoriented graphs. We further show that groups give an important source of qualgebra examples. Constructions from \cite{IshiiMCQ} and \cite{DehornoyLDM,DrapalLDM,DehornoyFreeALDS}, close to but different from ours, are also discussed. The notion of squandle is introduced in  Section~\ref{sec:Isosceles}, motivated by the concept of special colorings (with isosceles qualgebra colorings as the major example here). Squandle colorings are then used for distinguishing Kinoshita-Terasaka and standard $\Theta$-curves. Section~\ref{sec:Qualgebras4} contains a short study of basic properties of qualgebras and squandles, applied to a complete classification of qualgebras/squandles with $4$ elements. One of the ``exotic'' structures obtained is next used for distinguishing two cuff graphs. Section~\ref{sec:QualgebraHom} is devoted to the notions of qualgebra/squandle $2$-cocycles and $2$-coboundaries, as well as to the induced weight invariants of graphs. Qualgebra/squandle $2$-cocycles and second cohomology groups are calculated for $4$-element structures. The last section contains several suggestions for a further development of the qualgebra ideas presented here. 

\subsubsection*{Acknowledgements}

The author is grateful to Seiichi Kamada and J\'{o}zef Przytycki for stimulating discussions, and to Arnaud Mortier for his comments on an earlier version of this manuscript. During the writing of this paper, the author was supported by a JSPS Postdoctral Fellowship For Foreign Researchers and by JSPS KAKENHI Grant 25$\cdot$03315.

\section{Colorings: generalities and the quandle example}\label{sec:Colorings}

One of the most natural and efficient methods of constructing
invariants of certain topological objects (such as knots, braids,
tangles, knotted graphs, knotted surfaces, etc.) consists in studying colorings of
their diagrams by certain sets of colors. If the coloring rules are
carefully chosen, one can extract invariants of underlying
topological objects by studying diagram colorings --- for instance, considering
their total number, or some more sophisticated coloring
characteristics. In this section we develop a general framework for
such coloring invariants and illustrate it with the celebrated
example of quandle colorings for knots. We prefer a narrative style to a list of definitions here for the sake of readability. The rest of the paper is devoted to several applications of these coloring ideas to knotted $3$-valent graphs.

\subsection*{Topological colorings, counting invariants and quandles}

Let us now fix a class of $1$-dimensional \textit{diagrams} on a surface (e.g., familiar knot diagrams in~$\RR^2$). For this class of diagrams, choose several types of \textit{special points}, with the local picture of a diagram around a special point being determined by the point type (crossing points, points of local maximum and graph vertices are typical examples). These local pictures are called \textit{type patterns} (see Figure~\ref{pic:Colorings} for the examples of oriented crossing point and $3$-valent vertex patterns).
We want to study diagrams up to special-point-preserving isotopy, and up to a set of local (i.e., realized inside a small ball) invertible moves, called \textit{R-moves} (the example inspiring the name is that of Reidemeister moves for knots, cf. Figure~\ref{pic:RMoves}). Diagrams related by isotopy and R-moves are called \textit{R-equivalent}. This defines an equivalence relation on the set of diagrams, which corresponds in the cases of interest to the isotopy equivalence for underlying topological objects.

\begin{center}
\begin{pspicture}(-7.5,-2.5)(17,13)
\pscircle[linewidth=0.2,linestyle=dotted](0,5){7.5}
\pscurve[linewidth=0.2](0,12.5)(0,5)(2,2)(4,4)(4,5)
\pscurve[linewidth=0.2,border=0.5](4,5)(4,6)(2,8)(0,5)(0,-2.5)
\rput(13,5){$\overset{\text{R}\mathrm{I}}{\longleftrightarrow}$}
\end{pspicture}
\begin{pspicture}(-7.5,-2.5)(15,13)
\pscircle[linewidth=0.2,linestyle=dotted](0,5){7.5}
\psline[linewidth=0.2](0,12.5)(0,-2.5)
\end{pspicture}
\begin{pspicture}(-7.5,-2.5)(17,13)
\pscircle[linewidth=0.2,linestyle=dotted](0,5){7.5}
\pscurve[linewidth=0.2](2,-2)(-2,5)(2,12)
\pscurve[linewidth=0.2,border=0.5](-2,-2)(2,5)(-2,12)
\rput(13,5){$\overset{\text{R}\mathrm{II}}{\longleftrightarrow}$}
\end{pspicture}
\begin{pspicture}(-7.5,-2.5)(15,13)
\pscircle[linewidth=0.2,linestyle=dotted](0,5){7.5}
\pscurve[linewidth=0.2](2,-2)(1,5)(2,12)
\pscurve[linewidth=0.2,border=0.5](-2,-2)(-1,5)(-2,12)
\end{pspicture}
\begin{pspicture}(-7.5,-2.5)(17,13)
\pscircle[linewidth=0.2,linestyle=dotted](0,5){7.5}
\pscurve[linewidth=0.2,border=0.5](3,-1.5)(1.5,5)(-3,11.5)
\pscurve[linewidth=0.2,border=0.5](0,-2.5)(-1.5,5)(0,12.5)
\pscurve[linewidth=0.2,border=0.5](3,11.5)(1.5,5)(-3,-1.5)
\rput(13,5){$\overset{\text{R}\mathrm{III}}{\longleftrightarrow}$}
\end{pspicture}
\begin{pspicture}(-7.5,-2.5)(15,13)
\pscircle[linewidth=0.2,linestyle=dotted](0,5){7.5}
\pscurve[linewidth=0.2,border=0.5](3,-1.5)(-1.5,5)(-3,11.5)
\pscurve[linewidth=0.2,border=0.5](0,-2.5)(1.5,5)(0,12.5)
\pscurve[linewidth=0.2,border=0.5](3,11.5)(-1.5,5)(-3,-1.5)
\end{pspicture}
\medskip

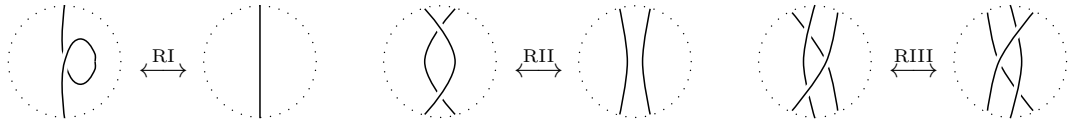
\captionof{figure}{Reidemeister moves for knot diagrams}
\label{pic:RMoves}
\end{center}

 An \textit{arc} is a part of a diagram delimited by special points. Fix a set~$\SS$ (possibly with some algebraic structure), which we think of as the coloring set. An \textit{$\SS$-coloring} of a diagram~$\D$ is a map
$$ \C: \A(\D) \longrightarrow \SS$$
from the set of its arcs to~$\SS$, satisfying some prescribed \textit{coloring rules} for arcs around special points. The set of such colorings of~$\D$ is denoted by~$\Col_{\SS}(\D)$. The notion of $\SS$-coloring extends from our class of diagrams to that of sub-diagrams (for instance, those involved in an R-move) in the obvious way. In the pictures, an arc~$\alpha$ is often decorated with its \textit{color} $\C(\alpha)$. 

\begin{definition}\label{D:TopColRules}
$\SS$-coloring rules are called \emph{topological} if for any (sub-)diagram~$\D$, any $\C \in \Col_{\SS}(\D)$ and any~$\D'$ obtained from~$\D$ by applying one R-move, there exists a unique coloring $\C' \in \Col_{\SS}(\D')$ coinciding with~$\C$ outside the small ball where the R-move was effectuated.
\end{definition} 

Such coloring rules allow one to construct invariants under R-equivalence. The most basic ones are \textit{counting invariants}:

\begin{lemma}\label{L:CountInvar}
Fix a class of diagrams, a set~$\SS$ and topological $\SS$-coloring rules. For any R-equivalent diagrams~$\D$ and~$\D'$, there exists a (non-canonical) bijection between their $\SS$-coloring sets:
\begin{equation}\label{E:ColBij}
\Col_{\SS}(\D) \overset{bij}{\longleftrightarrow} \Col_{\SS}(\D').
\end{equation}
In particular, the function $\D \mapsto \#\Col_{\SS}(\D)$ (where one allows the value~$\infty$) is well-defined on R-equivalence classes of diagrams.
\end{lemma}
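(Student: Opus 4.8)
The plan is to reduce the statement to a single elementary step and then invoke the uniqueness half of Definition~\ref{D:TopColRules}. First I would unwind the definition of R-equivalence: any two R-equivalent diagrams $\D$ and $\D'$ are joined by a finite chain $\D = \D_0, \D_1, \ldots, \D_n = \D'$ in which each consecutive pair $(\D_i,\D_{i+1})$ differs either by a special-point-preserving isotopy or by a single R-move. Since a composite of bijections is again a bijection, it suffices to produce a bijection $\Col_{\SS}(\D_i) \leftrightarrow \Col_{\SS}(\D_{i+1})$ for every elementary step and then concatenate to obtain~\eqref{E:ColBij}. So the whole argument splits into two cases, isotopy and a single move.

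The isotopy case I would dispatch immediately: a special-point-preserving isotopy carries the arcs of $\D_i$ bijectively onto those of $\D_{i+1}$ while preserving the local picture around each special point, hence transporting every coloring rule to itself. The induced map $\A(\D_i) \to \A(\D_{i+1})$ therefore sends $\Col_{\SS}(\D_i)$ bijectively onto $\Col_{\SS}(\D_{i+1})$, its inverse being induced by the reverse isotopy. The real content is the single-move case. There, for an R-move taking $\D_i$ to $\D_{i+1}$ inside a small ball $B$, Definition~\ref{D:TopColRules} assigns to each $\C \in \Col_{\SS}(\D_i)$ a \emph{unique} $\C' \in \Col_{\SS}(\D_{i+1})$ agreeing with $\C$ outside $B$; I would set $\Phi(\C) := \C'$. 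Because R-moves are invertible, the inverse move returns $\D_{i+1}$ to $\D_i$ within the same ball $B$, and applying Definition~\ref{D:TopColRules} to it produces in the same fashion a map $\Psi\colon \Col_{\SS}(\D_{i+1}) \to \Col_{\SS}(\D_i)$.

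The crux is then to check that $\Phi$ and $\Psi$ are mutually inverse. Given $\C \in \Col_{\SS}(\D_i)$, both $\Psi(\Phi(\C))$ and $\C$ are colorings of $\D_i$ that coincide with $\Phi(\C)$ outside $B$ --- the former by the defining property of $\Psi$, the latter because $\Phi(\C)$ agrees with $\C$ there. The uniqueness clause of Definition~\ref{D:TopColRules}, applied to the move $\D_{i+1} \to \D_i$, then forces $\Psi(\Phi(\C)) = \C$; the symmetric argument gives $\Phi(\Psi(\C'')) = \C''$ for every $\C'' \in \Col_{\SS}(\D_{i+1})$. Thus $\Phi$ is the bijection sought for this step, and concatenating the bijections over all elementary steps yields~\eqref{E:ColBij}.

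Finally, the cardinality statement is immediate once~\eqref{E:ColBij} is established: a bijection preserves cardinalities (matching $\infty$ with $\infty$), so $\#\Col_{\SS}(\D) = \#\Col_{\SS}(\D')$ whenever $\D$ and $\D'$ are R-equivalent, whence $\D \mapsto \#\Col_{\SS}(\D)$ descends to R-equivalence classes. I expect the only genuinely non-formal point to be the mutual-inverse verification in the previous paragraph, which rests entirely on the uniqueness half of Definition~\ref{D:TopColRules} (existence alone would merely give maps in both directions, not that they compose to the identity); everything else is bookkeeping about chains of moves and transport of colorings.
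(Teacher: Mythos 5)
Your proposal is correct and takes essentially the same route as the paper: reduce R-equivalence to a chain of single R-moves, take the map supplied by Definition~\ref{D:TopColRules} for each move, and compose. The only difference is that you spell out why the single-move map is a bijection (constructing the inverse from the inverse move and invoking the uniqueness clause), a point the paper leaves implicit when it simply calls it ``the bijection from the definition of topological coloring rules.''
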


Thus, if R-equivalence of diagrams corresponds to the isotopy equivalence for underlying topological objects, the lemma produces invariants of these topological objects.

\begin{proof}
If~$\D$ and~$\D'$ differ by a single R-move, one can take the bijection from the definition of topological coloring rules.
Composing these bijections, one gets the result for the case when~$\D$ and~$\D'$ differ by several R-moves.
\end{proof}

Before giving an example of topological coloring rules, we need a convention concerning orientations:

\begin{convention}\label{C:orientation}
In a class of oriented diagrams, using unoriented strands in R-moves or coloring rules means imposing these moves or rules for all possible orientations.
\end{convention}

\begin{example}\label{EX:QuandleCol}
Consider the class of oriented knot diagrams in~$\RR^2$, crossing points as the only type of
special points, Reidemeister moves from Figure~\ref{pic:RMoves} as R-moves, a set~$\Q$ endowed with a binary operation~$\lhd$ as the coloring set, and $\Q$-coloring rules from Figure~\ref{pic:Colorings}\rcircled{A}. From the
pioneer papers \cite{Joyce,Matveev}, these rules are known to be topological if and only if the structure $(\Q,\lhd)$ is a \textit{quandle}, i.e., satisfies Axioms \eqref{E:SD}-\eqref{E:Idem} (each of which corresponds to one Reidemeister move).
A typical example consists of a group $\G$ with the conjugation
operation $a\lhd b = b^{-1}ab$, called \textit{conjugation
quandle}. Counting invariants for such colorings even by simplest
finite quandles~$\Q$ appear to be rich and efficiently computable.
Note also that they are easily generalized to the diagrams of links
and tangles, as well as to their virtual versions.
\end{example}

\subsection*{Weight invariants and quandle $2$-cocycles}

Let us return to the general setting of a class of diagrams endowed with topological $\SS$-coloring rules. Counting invariants, though already very powerful for quandle colorings of knots, do not exploit the full potential of the bijection from~\eqref{E:ColBij}. More information can be extracted out of it using the following concept:

\begin{definition}\label{D:BW}
A \emph{weight function}~$\ww$ is a collection of maps, one for each
type of special points on our class of diagrams, associating an
integer to any $\SS$-colored pattern of the corresponding type. The
\emph{$\ww$-weight} of an $\SS$-colored (sub-)diagram $(\D,\C)$, denoted by $\BW_{\ww}(\D,\C)$, is the sum of the values of~$\ww$ on all its special points (we suppose the number of
the latter finite). If for any R-move the $\ww$-weights of the two involved sub-diagrams  correspondingly $\SS$-colored (in the sense of Definition~\ref{D:TopColRules}) coincide,
then $\ww$ is called a \emph{Boltzmann weight function}.
\end{definition}

Boltzmann weight functions allow to upgrade counting invariants to
what we call here \textit{weight invariants}:

\begin{lemma}\label{L:WeightedInvar}
Fix a class of diagrams, a set~$\SS$, topological $\SS$-coloring
rules and a Boltzmann weight function~$\ww$. Then the multi-sets of
$\ww$-weights of any R-equivalent diagrams~$\D$ and~$\D'$ coincide:
\begin{equation}\label{E:BWBij}
\{\BW_{\ww}(\D,\C) | \C \in \Col_{\SS}(\D)\} = \{\BW_{\ww}(\D',\C') | \C' \in \Col_{\SS}(\D')\}.
\end{equation}
In particular, restricted to the diagrams~$\D$ for which the set $\Col_{\SS}(\D)$ is finite, the function $$\D \mapsto \sum_{\C \in \Col_{\SS}(\D)} t^{\BW_{\ww}(\D,\C)}\in \ZZ[t^{\pm 1}]$$
is well-defined on R-equivalence classes of diagrams.
\end{lemma}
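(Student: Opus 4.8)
The plan is to leverage the bijection from Lemma~\ref{L:CountInvar} together with the defining property of a Boltzmann weight function, reducing the multi-set equality~\eqref{E:BWBij} to a local, R-move-by-R-move check. First I would recall that since $\D$ and $\D'$ are R-equivalent, they are connected by a finite sequence of isotopies and R-moves; by transitivity of multi-set equality it suffices to treat the case where $\D$ and $\D'$ differ by a single R-move (isotopy changes neither the colorings nor the special points, so it is trivial). For such a pair, let $\phi\colon \Col_{\SS}(\D) \to \Col_{\SS}(\D')$ denote the bijection from Lemma~\ref{L:CountInvar}: recall that $\phi$ is built from the unique-coloring property of topological rules (Definition~\ref{D:TopColRules}), so for each $\C \in \Col_{\SS}(\D)$ the coloring $\C' = \phi(\C)$ agrees with $\C$ on all arcs lying outside the small ball $B$ where the R-move is performed.

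The key step is to show $\BW_{\ww}(\D,\C) = \BW_{\ww}(\D',\phi(\C))$ for every $\C$. I would split the sum defining $\BW_{\ww}$ into special points outside $B$ and those inside $B$. Outside $B$ the two diagrams are literally identical and, since $\C$ and $\C'$ coincide there, these special points contribute the same colored patterns and hence the same $\ww$-values to both weights; they cancel. Inside $B$ we are exactly in the situation of the Boltzmann condition of Definition~\ref{D:BW}: the sub-diagrams inside $B$ are the two sides of the R-move, correspondingly $\SS$-colored (the inside-$B$ colorings are precisely the restrictions of $\C$ and $\C'$, which match on the boundary of $B$ by construction of $\phi$), so by hypothesis their $\ww$-weights agree. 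Adding back the common outside contribution gives the desired equality of total weights.

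Finally I would assemble the multi-set statement: because $\phi$ is a bijection and is weight-preserving in the sense just established, it maps the indexed family $\{\BW_{\ww}(\D,\C)\}_{\C \in \Col_{\SS}(\D)}$ bijectively onto $\{\BW_{\ww}(\D',\C')\}_{\C' \in \Col_{\SS}(\D')}$ with equal values, which is exactly the equality of multi-sets~\eqref{E:BWBij}. The ``in particular'' clause about the generating function $\D \mapsto \sum_{\C} t^{\BW_{\ww}(\D,\C)}$ then follows immediately, since this polynomial depends only on the multi-set of weights (when $\Col_{\SS}(\D)$ is finite), and equal multi-sets yield equal polynomials.

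I do not anticipate a genuinely hard obstacle here; the statement is a formal consequence of two earlier definitions. The only point requiring care is the bookkeeping of which special points lie inside versus outside the ball $B$, and the verification that $\phi$ really does fix the boundary colorings so that the inside-$B$ sub-diagrams are ``correspondingly colored'' in the precise sense demanded by the Boltzmann condition. Once that matching on $\partial B$ is pinned down, the cancellation of the outside contributions and the invocation of Definition~\ref{D:BW} are routine.
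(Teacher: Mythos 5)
Your proposal is correct and follows essentially the same route as the paper's proof: reduce to a single R-move, use the bijection furnished by Definition~\ref{D:TopColRules} (colorings agreeing outside the small ball), and invoke the Boltzmann property of~$\ww$ to match the weights, iterating over a sequence of moves. The paper states this tersely, while you make explicit the inside/outside-the-ball decomposition of $\BW_{\ww}$ that the paper's one-line appeal to Definition~\ref{D:BW} implicitly relies on; there is no substantive difference.
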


\begin{proof}
If~$\D$ and~$\D'$ differ by a single R-move, then
Definition~\ref{D:TopColRules} describes a bijection between
$\Col_{\SS}(\D)$ and  $\Col_{\SS}(\D')$ such that corresponding
colorings~$\C$ and~$\C'$ differ only in small balls where the R-move
is effectuated; Definition~\ref{D:BW} then gives $\BW_{\ww}(\D,\C) =
\BW_{\ww}(\D',\C')$, implying the desired multi-set equality. Iterating this argument, one gets the result for the case when~$\D$ and~$\D'$ differ by several R-moves.
\end{proof}

Note that Equality~\eqref{E:BWBij}, as well as most further examples and results, remain valid
if weight functions are allowed to take values in any Abelian group and not only in the group of integers~$\ZZ$.

\begin{example}\label{EX:QuandleCol2}
Continuing Example~\ref{EX:QuandleCol}, take a map~$\w:\Q\times\Q\rightarrow\ZZ$ and consider a weight function, still denoted by~$\w$, that depends only on two of the colors around a crossing point (which is the only type of special points here) as shown on Figure~\ref{pic:Quandles}. In~\cite{QuandleHom} this weight function was shown to be Boltzmann if and
only if it satisfies the following axioms for all elements of~$\Q$
(corresponding, respectively, to moves R$\mathrm{III}$
and R$\mathrm{I}$, the remaining one being automatic):
\begin{align}
&\w(a, b) +\w(a\lhd b, c) = \w(a\lhd c,b\lhd c) + \w(a, c), \label{E:BWR3}\\
&\w(a, a)= 0. \label{E:BWR1}
\end{align}
Moreover, these conditions were interpreted as the definition of
\textit{$2$-cocycles} from the celebrated \textit{quandle cohomology}
theory. In this theory, \textit{$2$-coboundaries} are defined by
\begin{align}
&\w_\f(a, b) = \f(a\lhd b) - \f(a) \label{E:BWB}
\end{align}
for any map $\f:\Q \rightarrow \ZZ$, and they are precisely the $2$-cocycles such that $\BW_\w$ vanishes on all $\Q$-colored knot diagrams.

\begin{center}
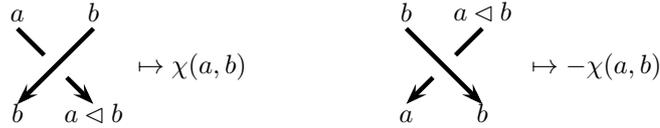

\begin{pspicture}(50,13)
\psline[linewidth=0.6,arrowsize=2]{->}(0,10)(10,0)
\psline[linewidth=0.6,border=1.8,arrowsize=2]{->}(10,10)(0,0)
\rput[b](0,11){$a$}
\rput[b](10,11){$b$}
\rput[t](0,0){$b$}
\rput[t](10,0){$a \lhd b$}
\rput(23,5){$\mapsto \w(a,b)$}
\end{pspicture}
\begin{pspicture}(40,13)
\psline[linewidth=0.6,arrowsize=2]{->}(10,10)(0,0)
\psline[linewidth=0.6,border=1.8,arrowsize=2]{->}(0,10)(10,0)
\rput[b](0,11){$b$}
\rput[b](10,11){$a \lhd b$}
\rput[t](0,0){$\bw a \bw$}
\rput[t](10,0){$b$}
\rput(25,5){$\mapsto -\w(a,b)$}
\end{pspicture}
\medskip
\captionof{figure}{Quandle $2$-cocycle weight function for knot diagrams} \label{pic:Quandles}
\end{center}

Weight invariants of knots constructed out of quandle $2$-cocycles
are known as \textit{quandle cocycle invariants}. They are even more
efficient than quandle counting invariants, since the same small
quandle can admit various $2$-cocycles. Moreover, they are strictly
stronger than quandle counting invariants since, contrary to the
latter, they can distinguish a knot from its mirror image. See \cite{QuandleHom,Kamada,CarterComputations,CarterDiagrammatic,NelsonLN,PrzytyckiTrefoilQuandle} and references therein for more details.
\end{example}

\section{Qualgebra coloring invariants of knotted $3$-valent graphs}\label{sec:GraphInvar}

We now turn to our main object of study, namely, to \textit{knotted $3$-valent graphs} (i.e., embeddings of abstract $3$-valent graphs into~$\RR^3$) and their diagrams in~$\RR^2$; see Figures~\ref{pic:theta} and~\ref{pic:cuffs} for typical examples. In what follows, the word ``graph'' is often used instead of ``knotted $3$-valent graph'' for brevity. Two types of special points are relevant for graph diagrams: crossing points and graph vertices. In 1989, L.H.Kauffman, S.Yamada and D.N.Yetter independently \cite{KauffmanGraphs,YamadaGraphs,YetterGraphs} extended the Reidemeister moves for knots (Figure~\ref{pic:RMoves}) by the three moves presented on Figure~\ref{pic:RMovesGraphs}, showing that the resulting 6 moves precisely describe  graph isotopy in $\RR^3$. We therefore choose them as R-moves here, noting that R-equivalence classes of graph diagrams now correspond to isotopy classes of represented graphs. The names of the moves are chosen here to visually resemble the sub-diagrams involved.

\begin{center}
\begin{pspicture}(-7.5,-2.5)(17,13)
\pscircle[linewidth=0.2,linestyle=dotted](0,5){7.5}
\pscurve[linewidth=0.2](3,-1.5)(-1.5,1.5)(-3,11.5)
\pscurve[linewidth=0.2](0.5,5)(-0.3,9)(0,12.5)
\pscurve[linewidth=0.2](0.5,5)(2.5,8)(3,11.5)
\psline[linewidth=0.2,border=0.6](0.5,5)(-3,-1.5)
\rput(13,5){$\overset{\text{R}\mathrm{IV}}{\longleftrightarrow}$}
\end{pspicture}
\begin{pspicture}(-7.5,-2.5)(15,13)
\pscircle[linewidth=0.2,linestyle=dotted](0,5){7.5}
\pscurve[linewidth=0.2](3,-1.5)(2,4)(1,6)(-3,11.5)
\pscurve[linewidth=0.2,border=0.6](-1.5,1.5)(-1,7)(0,12.5)
\pscurve[linewidth=0.2,border=0.6](-1.5,1.5)(2,4)(3,11.5)
\pscurve[linewidth=0.2](-1.5,1.5)(-1,7)(0,12.5)
\psline[linewidth=0.2](-1.5,1.5)(-3,-1.5)
\end{pspicture}
\begin{pspicture}(-7.5,-2.5)(17,13)
\pscircle[linewidth=0.2,linestyle=dotted](0,5){7.5}
\pscurve[linewidth=0.2](-3,11.5)(2,5)(0,2)
\pscurve[linewidth=0.2,border=0.6](3,11.5)(-2,5)(0,2)
\psline[linewidth=0.2](1,3)(0,2)(0,-2.5)
\rput(13,5){$\overset{\text{R}\mathrm{V}}{\longleftrightarrow}$}
\end{pspicture}
\begin{pspicture}(-7.5,-2.5)(15,13)
\pscircle[linewidth=0.2,linestyle=dotted](0,5){7.5}
\psline[linewidth=0.2](0,2)(0,-2.5)
\pscurve[linewidth=0.2](-3,11.5)(-2,5)(0,2)
\pscurve[linewidth=0.2](3,11.5)(2,5)(0,2)
\end{pspicture}
\begin{pspicture}(-7.5,-2.5)(17,13)
\pscircle[linewidth=0.2,linestyle=dotted](0,5){7.5}
\pscurve[linewidth=0.2](-0.5,5)(0.3,9)(0,12.5)
\pscurve[linewidth=0.2](-0.5,5)(-2.5,8)(-3,11.5)
\psline[linewidth=0.2](-0.5,5)(3,-1.5)
\pscurve[linewidth=0.2,border=0.6](-3,-1.5)(1.5,1.5)(3,11.5)
\rput(13,5){$\overset{\text{R}\mathrm{VI}}{\longleftrightarrow}$}
\end{pspicture}
\begin{pspicture}(-7.5,-2.5)(15,13)
\pscircle[linewidth=0.2,linestyle=dotted](0,5){7.5}
\pscurve[linewidth=0.2](1.5,1.5)(1,7)(0,12.5)
\pscurve[linewidth=0.2](1.5,1.5)(-2,4)(-3,11.5)
\psline[linewidth=0.2](1.5,1.5)(3,-1.5)
\pscurve[linewidth=0.2,border=0.6](-3,-1.5)(-2,4)(-1,6)(3,11.5)
\end{pspicture}
\medskip

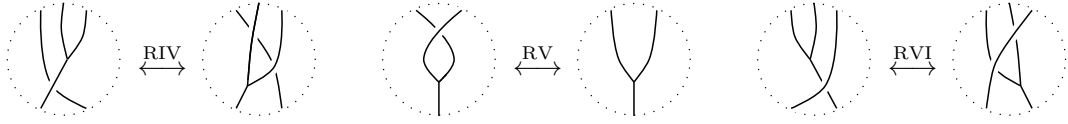
\captionof{figure}{Additional Reidemeister moves for knotted $3$-valent graph diagrams} \label{pic:RMovesGraphs}
\end{center}

Since quandles worked so well for knots, we would like to use a quandle $(\Q, \lhd)$ as the coloring set in the generalized setting of graphs as well. This section is thus devoted to the following question:

\begin{question}
How can one extend the $\Q$-coloring rule from Figure~\ref{pic:Colorings}\rcircled{A} to $3$-valent vertices so that the resulting coloring rules for graphs are topological?
\end{question}

After a short discussion of existing answers, we shall propose an original one. Since the coloring rule around crossing points will always be that from Figure~\ref{pic:Colorings}\rcircled{A} in this paper, we shall often omit it, restricting our study to rules around $3$-valent vertices.

\subsection*{Colorings for graphs: existing approaches}

Required coloring rules are easy to define geometrically for a conjugation quandle
$(\G, a \lhd b = b^{-1}ab)$.
Choose a basepoint~$p$ situated ``over'' a diagram~$\D$ of an oriented graph~$\Gamma$. Consider the  \textit{Wirtinger presentation} of the graph group $\pi_1(\RR^3\backslash \Gamma; p)$  with one generator~$\gen_\alpha$ for each arc~$\alpha$ of~$\D$, constructed according to Figure~\ref{pic:QuandlesForGraphs}\rcircled{A}. An (evident) relation is imposed on the generators around each special point. A representation of  $\pi_1(\RR^3\backslash \Gamma; p)$ in~$\G$ is now a map~$\PP$ from $\{\gen_\alpha | \alpha \in \A(\D)\}$ to~$\G$ respecting these relations. But for~$\PP$ to respect these relations is precisely the same thing as for the map $\C:\alpha \mapsto \PP(\gen_\alpha)$ to be a coloring with respect to coloring rules from Figures~\ref{pic:Colorings}\rcircled{A} and~\ref{pic:QuandlesForGraphs}\rcircled{B} (where in the relation a color
or its inverse should be chosen according to the arc being directed from or to the graph vertex).
The latter coloring rules are topological, as can be seen via this graph group representation interpretation, or by an easy direct verification. For any diagram~$\D$ of~$\Gamma$, one thus gets a bijection 
$$\Col_{\G}(\D) \overset{bij}{\longleftrightarrow} \Hom(\pi_1(\RR^3\backslash \Gamma),\G).$$

These conjugation quandle colorings for graphs can be generalized in several ways.
First, in 2010 M.Niebrzydowski \cite{NiebrzydowskiGraphs} extended the rules from Figure~\ref{pic:QuandlesForGraphs}\rcircled{B} to general
quandles, as shown on Figure~\ref{pic:QuandlesForGraphs}\rcircled{C} (here and
afterwards notation $\lhd^{+}$ stands for~$\lhd$, and $\lhd^{-}$
stands for~$\wlhd$; the choice in~$\pm$ depends, as usual, on
orientations). Another approach was proposed by A.Ishii in his
recent preprint \cite{IshiiMCQ}. He considered a quandle operation~$\lhd$ on a
disjoint union of groups $X = \bigsqcup_i \G_i$, which is the conjugation
operation when restricted to each~$\G_i$ and which satisfies some
additional conditions. Such structure is called a \textit{multiple
conjugation quandle (MCQ)}, and it includes as particular cases usual
conjugation quandles and \textit{$G$-families of quandles}, defined in 2012
by Ishii-Iwakiri-Jang-Oshiro \cite{Gfamilies}. The coloring rule from
Figure~\ref{pic:QuandlesForGraphs}\rcircled{B}, where one demands $a,b$
and~$c$ to lie in the same group~$\G_i$, is topological for MCQ.

\begin{center}
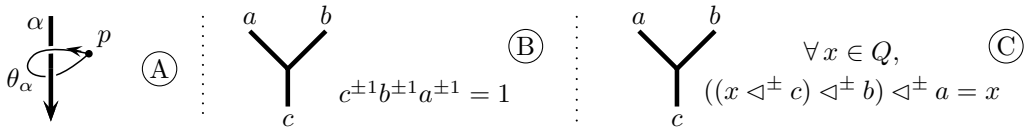

\begin{pspicture}(-7,0)(25,13)
\psline[linewidth=0.6](0,12)(0,0)
\pscurve[linewidth=0.2,border=0.4](5,7)(-2,7)(-3,5)(0,4)(5,7)
\psline[linewidth=0.2,arrowsize=1.5]{->}(4,7.3)(2,7.7)
\psline[linewidth=0.6,border=0.4,arrowsize=2]{->}(0,5)(0,-2)
\pscircle*(5,7){0.5}
\rput(7,9){$p$}
\rput(-2,11){$\alpha$}
\rput[t](-4,5){$\gen_\alpha$}
\rput(14,5){\rcircled{A}}
\psline[linestyle=dotted](20,-1)(20,12)
\end{pspicture}
\begin{pspicture}(50,15)
\psline[linewidth=0.6](0,10)(5,5)(5,0)
\psline[linewidth=0.6](10,10)(5,5)
\rput[b](0,11){$a$}
\rput[b](10,11){$b$}
\rput[t](5,-1){$c$}
\rput(23,2){$c^{\pm 1}b^{\pm 1}a^{\pm 1} = 1$}
\rput(36,8){\rcircled{B}}
\psline[linestyle=dotted](43,-2)(43,12)
\end{pspicture}
\begin{pspicture}(50,15)
\psline[linewidth=0.6](0,10)(5,5)(5,0)
\psline[linewidth=0.6](10,10)(5,5)
\rput[b](0,11){$a$}
\rput[b](10,11){$b$}
\rput[t](5,-1){$c$}
\rput(28,7){$\forall \, x \in \Q,$}
\rput(28,2){$((x \lhd^{\pm} c) \lhd^{\pm} b) \lhd^{\pm} a =x$}
\rput(48,8){\rcircled{C}}
\end{pspicture}
\captionof{figure}{Possible extensions of quandle colorings to graph
diagrams} \label{pic:QuandlesForGraphs}
\end{center}

\subsection*{Well-oriented $3$-valent graphs}

The coloring rule we introduce in this work is another generalization of conjugation quandle colorings of graphs to a broader class of quandles. It is defined for graphs oriented in a special way:

\begin{definition}\label{D:WellOriented}
An abstract or knotted oriented $3$-valent graph is called  \emph{well-oriented} if it has only \textit{zip} and \textit{unzip} vertices, cf. Figure~\ref{pic:Zip}.
\end{definition}

In other words, one forbids source and sink vertices.

\begin{center}
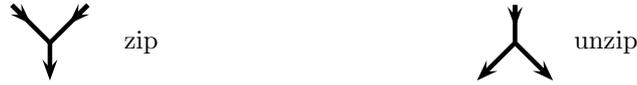

\begin{pspicture}(60,12)
\psline[linewidth=0.6,arrowsize=2]{->}(0,10)(2.5,7.5)
\psline[linewidth=0.6,arrowsize=2]{->}(0,10)(5,5)(5,0)
\psline[linewidth=0.6,arrowsize=2]{->}(10,10)(7.5,7.5)
\psline[linewidth=0.6](10,10)(5,5)
\rput(17,5){zip}
\end{pspicture}
\begin{pspicture}(20,12)
\psline[linewidth=0.6,arrowsize=2]{->}(5,10)(5,7)
\psline[linewidth=0.6,arrowsize=2]{->}(5,10)(5,5)(0,0)
\psline[linewidth=0.6,arrowsize=2]{->}(5,5)(10,0)
\rput(17,5){unzip}
\end{pspicture}
\captionof{figure}{Zip and unzip vertices for $3$-valent graphs} \label{pic:Zip}
\end{center}

For well-oriented graph diagrams, some of the R-moves can be discarded using the so called \textit{Turaev's trick} (see also \cite{PolyakRMoves} for a detailed and careful study of minimal generating sets of Reidemeister moves in the knot case):

\begin{lemma}\label{L:RMovesForWellOrGraphs}
Reidemeister moves $\mathrm{IV}$-$\mathrm{VI}$ with orientations as in Figure~\ref{pic:RMovesForWellOrGraphs}, together with all oriented versions of moves R$\mathrm{I}$-R$\mathrm{III}$, imply all remaining well-oriented versions of moves R$\mathrm{IV}$-R$\mathrm{VI}$.
\end{lemma}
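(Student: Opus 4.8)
The plan is to argue by \textbf{Turaev's trick}: starting from a nonstandard well-oriented version of one of the moves R$\mathrm{IV}$--R$\mathrm{VI}$, I would reduce it to the reference version of Figure~\ref{pic:RMovesForWellOrGraphs} at the cost of a few oriented R$\mathrm{I}$--R$\mathrm{III}$ moves, which are all available by hypothesis. First I would enumerate the well-oriented orientation patterns of each move. Since well-orientedness forces every vertex to be a zip or an unzip (Figure~\ref{pic:Zip}), the three edges meeting at the vertex are not free: once the vertex type is fixed, their flow is determined, so only the vertex type and the orientation of the \emph{free} strand (the arc sliding past the vertex in R$\mathrm{IV}$ and R$\mathrm{VI}$, and the analogous free arc in R$\mathrm{V}$) remain as parameters. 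This leaves only finitely many patterns to treat for each move.

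Next I would cut down the case analysis using the global symmetry that reverses the flow on \emph{every} strand simultaneously. This operation sends well-oriented diagrams to well-oriented diagrams, interchanges zip and unzip vertices, carries each oriented move to an oriented move, and preserves the full set of oriented R$\mathrm{I}$--R$\mathrm{III}$ moves; hence it preserves the relation ``is derivable from the generators''. After checking that the reference moves of Figure~\ref{pic:RMovesForWellOrGraphs} are compatible with this symmetry (i.e.\ that their orientation-reverses are again among the available moves), I would thereby reduce to the variants with one fixed vertex type, the others following formally by applying the symmetry to a derivation.

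The heart of the argument is reversing the orientation of the free strand. Here I would apply Turaev's trick: just outside the disk in which the move takes place, insert a cap--cup detour produced by a single R$\mathrm{II}$ move (together with an R$\mathrm{I}$ kink if the strand must be turned rather than merely rerouted), so that \emph{inside} the enlarged disk the free strand now carries the reference orientation; then apply the standard R$\mathrm{IV}$, R$\mathrm{V}$ or R$\mathrm{VI}$ from the figure; then delete the detour by the inverse R$\mathrm{II}$ (and R$\mathrm{I}$). The net effect rewrites the nonstandard-orientation version as a composite of oriented R$\mathrm{I}$--R$\mathrm{II}$ moves and one reference move, as desired. Running this for each of R$\mathrm{IV}$, R$\mathrm{V}$, R$\mathrm{VI}$ completes the finite check.

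The main obstacle is the bookkeeping needed to keep every intermediate diagram inside the well-oriented class. The detour may be applied only to the free strand: reversing the flow along a vertex-incident edge is not realizable by an isotopy together with R$\mathrm{I}$--R$\mathrm{III}$ (it would change the vertex type or, worse, create a forbidden source or sink vertex), so the trick cannot be used there, and the vertex type must instead be handled by the global symmetry of the previous step. Thus the delicate point is to verify, pattern by pattern, that the reference orientations chosen in Figure~\ref{pic:RMovesForWellOrGraphs} are exactly the ones for which the Turaev detour lands back in the permitted class and invokes only the allowed moves; this compatibility is what makes the reduction go through.
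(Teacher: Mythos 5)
Your core mechanism --- the R$\mathrm{II}$ (plus R$\mathrm{I}$) detour that reverses the orientation of the strand sliding past the vertex, followed by the reference move and deletion of the detour --- is exactly the paper's Turaev-trick derivation: Figure~\ref{pic:RMovesForWellOrGraphsProof} does precisely this for R$\mathrm{IV}^u$. But your case enumeration, and hence the reduction scheme built on it, has a genuine gap. It is not true that once the vertex type is fixed ``their flow is determined'': a zip (resp.\ unzip) vertex only fixes that two of its edges are incoming and one outgoing (resp.\ the reverse), not \emph{which} of the three geometric positions inside the move carries the odd orientation. So besides the vertex type and the free-strand orientation there is a third parameter --- the position of the odd edge --- giving well-oriented versions of R$\mathrm{IV}$--R$\mathrm{VI}$ that your scheme never reaches: the free-strand detour leaves the vertex arcs untouched, and the global orientation reversal only exchanges zip with unzip (moreover, the six moves of Figure~\ref{pic:RMovesForWellOrGraphs} already form three reversal-related pairs, so the symmetry produces nothing beyond what the references give directly).

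The failure is most visible for R$\mathrm{V}$, where \emph{every} strand is vertex-incident --- there is no free arc at all, contrary to your ``analogous free arc in R$\mathrm{V}$'' --- so all four nonreference well-oriented versions of R$\mathrm{V}$ fall outside your method. This is not a marginal case: the paper's Figure~\ref{pic:RMovesForWellOrGraphsProof2} treats exactly such a variant (an unzip vertex whose incoming arc sits at a top position rather than at the bottom), and the derivation required is more than a detour. One first applies an already-available R$\mathrm{IV}^u$ to reroute a vertex-incident arc around the vertex, then an R$\mathrm{I}$ kink, then the reference R$\mathrm{V}^u$, then R$\mathrm{II}$; in effect R$\mathrm{IV}$ combined with R$\mathrm{I}$/R$\mathrm{II}$ is used to ``rotate'' the vertex so that the odd edge moves into the reference position. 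This idea is absent from your proposal, and is in fact excluded by your explicit assertion that vertex-incident edges need no treatment beyond the global symmetry. The same issue, less dramatically, leaves untreated the R$\mathrm{IV}$ and R$\mathrm{VI}$ variants whose vertex arcs are in a nonreference arrangement, so the proposal as written proves only a proper subset of the lemma.
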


\begin{center}
\begin{pspicture}(-7.5,-2.5)(17,13)
\pscircle[linewidth=0.2,linestyle=dotted](0,5){7.5}
\pscurve[linewidth=0.2,arrowsize=1.5]{->}(-3,11.5)(-1.5,1.5)(3,-1.5)
\psline[linewidth=0.2,arrowsize=1.5]{->}(3,11.5)(2.5,9)
\pscurve[linewidth=0.2](2.5,9)(2.3,8)(1.5,6)(0.5,5)
\psline[linewidth=0.2,arrowsize=1.5]{->}(0,12.5)(-0.5,10)
\pscurve[linewidth=0.2](-0.5,10)(-0.6,9.5)(-0.4,7)(0.5,5)
\psline[linewidth=0.2,border=0.6,arrowsize=1.5]{->}(0.5,5)(-3,-1.5)
\rput(13,5){$\overset{\text{R}\mathrm{IV}^z}{\longleftrightarrow}$}
\end{pspicture}
\begin{pspicture}(-7.5,-2.5)(15,13)
\pscircle[linewidth=0.2,linestyle=dotted](0,5){7.5}
\pscurve[linewidth=0.2,arrowsize=1.5]{->}(-3,11.5)(1,6)(2,4)(3,-1.5)
\psline[linewidth=0.2,arrowsize=1.5]{->}(3,11.5)(2.8,8.5)
\pscurve[linewidth=0.2,border=0.6](2.8,8.5)(2,4)(-1.5,1.5)
\psline[linewidth=0.2,arrowsize=1.5]{->}(0,12.5)(-0.5,10)
\pscurve[linewidth=0.2,border=0.6](-0.5,10)(-1.2,6)(-1.5,1.5)
\pscurve[linewidth=0.2](2.8,8.5)(2,4)(-1.5,1.5)
\psline[linewidth=0.2,arrowsize=1.5]{->}(-1.5,1.5)(-3,-1.5)
\end{pspicture}
\begin{pspicture}(-7.5,-2.5)(17,13)
\pscircle[linewidth=0.2,linestyle=dotted](0,5){7.5}
\pscurve[linewidth=0.2,arrowsize=1.5]{->}(3,-1.5)(-1.5,1.5)(-3,11.5)
\pscurve[linewidth=0.2,arrowsize=1.5]{->}(0.5,5)(-0.3,9)(0,12.5)
\pscurve[linewidth=0.2,arrowsize=1.5]{->}(0.5,5)(2.5,8)(3,11.5)
\psline[linewidth=0.2,border=0.6,arrowsize=1.5]{->}(-3,-1.5)(0.5,5)
\rput(13,5){$\overset{\text{R}\mathrm{IV}^u}{\longleftrightarrow}$}
\end{pspicture}
\begin{pspicture}(-7.5,-2.5)(15,13)
\pscircle[linewidth=0.2,linestyle=dotted](0,5){7.5}
\pscurve[linewidth=0.2,arrowsize=1.5]{->}(3,-1.5)(2,4)(1,6)(-3,11.5)
\pscurve[linewidth=0.2,border=0.6,arrowsize=1.5]{->}(-1.5,1.5)(-1,7)(0,12.5)
\pscurve[linewidth=0.2,border=0.6,arrowsize=1.5]{->}(-1.5,1.5)(2,4)(3,11.5)
\pscurve[linewidth=0.2](-1.5,1.5)(-1,7)(0,12.5)
\psline[linewidth=0.2,arrowsize=1.5]{->}(-3,-1.5)(-1.5,1.5)
\end{pspicture}
\begin{pspicture}(-7.5,-2.5)(17,13)
\pscircle[linewidth=0.2,linestyle=dotted](0,5){7.5}
\pscurve[linewidth=0.2,arrowsize=1.5]{->}(-3,11.5)(2,5)(0,2)
\pscurve[linewidth=0.2,border=0.6,arrowsize=1.5]{->}(3,11.5)(-2,5)(0,2)
\psline[linewidth=0.2,arrowsize=1.5]{->}(1,3)(0,2)(0,-2.5)
\rput(13,5){$\overset{\text{R}\mathrm{V}^z}{\longleftrightarrow}$}
\end{pspicture}
\begin{pspicture}(-7.5,-2.5)(15,13)
\pscircle[linewidth=0.2,linestyle=dotted](0,5){7.5}
\psline[linewidth=0.2,arrowsize=1.5]{->}(0,2)(0,-2.5)
\pscurve[linewidth=0.2,arrowsize=1.5]{->}(-3,11.5)(-2,5)(0,2)
\pscurve[linewidth=0.2,arrowsize=1.5]{->}(3,11.5)(2,5)(0,2)
\end{pspicture}
\medskip

\begin{pspicture}(-7.5,-2.5)(17,13)
\pscircle[linewidth=0.2,linestyle=dotted](0,5){7.5}
\pscurve[linewidth=0.2,arrowsize=1.5]{->}(0,2)(2,5)(-3,11.5)
\pscurve[linewidth=0.2,border=0.6,arrowsize=1.5]{->}(0,2)(-2,5)(3,11.5)
\psline[linewidth=0.2,arrowsize=1.5]{->}(0,-2.5)(0,2)
\psline[linewidth=0.2](0,2)(1,3)
\rput(13,5){$\overset{\text{R}\mathrm{V}^u}{\longleftrightarrow}$}
\end{pspicture}
\begin{pspicture}(-7.5,-2.5)(15,13)
\pscircle[linewidth=0.2,linestyle=dotted](0,5){7.5}
\psline[linewidth=0.2,arrowsize=1.5]{->}(0,-2.5)(0,2)
\pscurve[linewidth=0.2,arrowsize=1.5]{->}(0,2)(-2,5)(-3,11.5)
\pscurve[linewidth=0.2,arrowsize=1.5]{->}(0,2)(2,5)(3,11.5)
\end{pspicture}
\begin{pspicture}(-7.5,-2.5)(17,13)
\pscircle[linewidth=0.2,linestyle=dotted](0,5){7.5}
\psline[linewidth=0.2,arrowsize=1.5]{->}(-3,11.5)(-2.5,9)
\pscurve[linewidth=0.2](-2.5,9)(-2.3,8)(-1.5,6)(-0.5,5)
\psline[linewidth=0.2,arrowsize=1.5]{->}(0,12.5)(0.5,10)
\pscurve[linewidth=0.2](0.5,10)(0.6,9.5)(0.4,7)(-0.5,5)
\psline[linewidth=0.2,arrowsize=1.5]{->}(-0.5,5)(3,-1.5)
\pscurve[linewidth=0.2,border=0.6,arrowsize=1.5]{->}(3,11.5)(1.5,1.5)(-3,-1.5)
\rput(13,5){$\overset{\text{R}\mathrm{VI}^z}{\longleftrightarrow}$}
\end{pspicture}
\begin{pspicture}(-7.5,-2.5)(15,13)
\pscircle[linewidth=0.2,linestyle=dotted](0,5){7.5}
\psline[linewidth=0.2,arrowsize=1.5]{->}(-3,11.5)(-2.8,8.5)
\pscurve[linewidth=0.2](-2.8,8.5)(-2,4)(1.5,1.5)
\psline[linewidth=0.2,arrowsize=1.5]{->}(0,12.5)(0.5,10)
\pscurve[linewidth=0.2](0.5,10)(1.2,6)(1.5,1.5)
\psline[linewidth=0.2,arrowsize=1.5]{->}(1.5,1.5)(3,-1.5)
\pscurve[linewidth=0.2,border=0.6,arrowsize=1.5]{->}(3,11.5)(-1,6)(-2,4)(-3,-1.5)
\end{pspicture}
\begin{pspicture}(-7.5,-2.5)(17,13)
\pscircle[linewidth=0.2,linestyle=dotted](0,5){7.5}
\pscurve[linewidth=0.2,arrowsize=1.5]{->}(-0.5,5)(0.3,9)(0,12.5)
\pscurve[linewidth=0.2,arrowsize=1.5]{->}(-0.5,5)(-2.5,8)(-3,11.5)
\psline[linewidth=0.2,arrowsize=1.5]{->}(3,-1.5)(-0.5,5)
\pscurve[linewidth=0.2,border=0.6,arrowsize=1.5]{->}(-3,-1.5)(1.5,1.5)(3,11.5)
\rput(13,5){$\overset{\text{R}\mathrm{VI}^u}{\longleftrightarrow}$}
\end{pspicture}
\begin{pspicture}(-7.5,-2.5)(15,13)
\pscircle[linewidth=0.2,linestyle=dotted](0,5){7.5}
\pscurve[linewidth=0.2,arrowsize=1.5]{->}(1.5,1.5)(1,7)(0,12.5)
\pscurve[linewidth=0.2,arrowsize=1.5]{->}(1.5,1.5)(-2,4)(-3,11.5)
\psline[linewidth=0.2,arrowsize=1.5]{->}(3,-1.5)(1.5,1.5)
\pscurve[linewidth=0.2,border=0.6,arrowsize=1.5]{->}(-3,-1.5)(-2,4)(-1,6)(3,11.5)
\end{pspicture}
\medskip

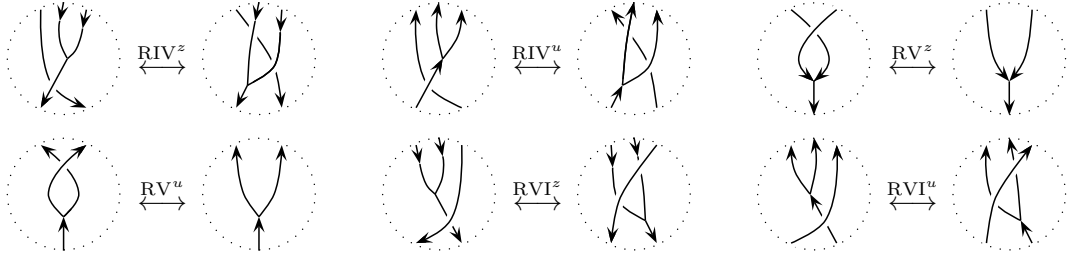
\captionof{figure}{Reidemeister moves for well-oriented graph diagrams} \label{pic:RMovesForWellOrGraphs}
\end{center}

Superscripts~$^z$ and~$^u$ refer to the zip or unzip vertex involved in the move.

\begin{proof}
Move R$\mathrm{IV}^u$ for another orientation is treated in Figure~\ref{pic:RMovesForWellOrGraphsProof}; an alternative orientation of R$\mathrm{V}^u$ is dealt with in Figure~\ref{pic:RMovesForWellOrGraphsProof2}. Other moves and orientations can be treated in a similar way.

\begin{center}
\begin{pspicture}(-5,-2.5)(12,13)
\pscurve[linewidth=0.2,arrowsize=1.5]{->}(-3,12.5)(-1.5,1.5)(3,-1.5)
\pscurve[linewidth=0.2,arrowsize=1.5]{->}(0.5,5)(-0.3,9)(0,12.5)
\pscurve[linewidth=0.2,arrowsize=1.5]{->}(0.5,5)(2.5,8)(3,12.5)
\psline[linewidth=0.2,border=0.6,arrowsize=1.5]{->}(-3,-1.5)(0.5,5)
\rput(10,5){$\overset{\text{R}\mathrm{II}}{\longleftrightarrow}$}
\end{pspicture}
\begin{pspicture}(-7,-2.5)(15,18)
\pscurve[linewidth=0.2,arrowsize=1.5]{->}(-3,12.5)(4.5,6)(-2.5,9)(-1.5,1.5)(3,-1.5)
\pscurve[linewidth=0.2,border=0.6,arrowsize=1.5]{->}(0.5,5)(-0.3,9)(0,12.5)
\pscurve[linewidth=0.2,border=0.6,arrowsize=1.5]{->}(0.5,5)(2.5,8)(3,12.5)
\psline[linewidth=0.2,border=0.6,arrowsize=1.5]{->}(-3,-1.5)(0.5,5)
\pscurve[linewidth=0.2,arrowsize=1.5]{->}(0.5,5)(-0.3,9)(0,12.5)
\pscurve[linewidth=0.2,arrowsize=1.5]{->}(0.5,5)(2.5,8)(3,12.5)
\rput(13,5){$\overset{\text{R}\mathrm{IV}^u}{\longleftrightarrow}$}
\end{pspicture}
\begin{pspicture}(-7,-2.5)(15,18)
\pscurve[linewidth=0.2,arrowsize=1.5]{->}(-3,12.5)(2.5,8)(4.5,2)(-3.5,2)(-2,1)(3,-1.5)
\pscurve[linewidth=0.2,border=0.6,arrowsize=1.5]{->}(0.5,5)(-0.3,9)(0,12.5)
\pscurve[linewidth=0.2,border=0.6,arrowsize=1.5]{->}(0.5,5)(2.5,8)(3,12.5)
\psline[linewidth=0.2,border=0.6,arrowsize=1.5]{->}(-3,-1.5)(0.5,5)
\pscurve[linewidth=0.2,arrowsize=1.5]{->}(0.5,5)(-0.3,9)(0,12.5)
\pscurve[linewidth=0.2,arrowsize=1.5]{->}(0.5,5)(2.5,8)(3,12.5)
\rput(13,5){$\overset{\text{R}\mathrm{II}}{\longleftrightarrow}$}
\end{pspicture}
\begin{pspicture}(-7,-2.5)(12,7)
\pscurve[linewidth=0.2,arrowsize=1.5]{->}(-3,12.5)(1,6)(2,4)(3,-1.5)
\pscurve[linewidth=0.2,border=0.6,arrowsize=1.5]{->}(-1.5,1.5)(-1,7)(0,12.5)
\pscurve[linewidth=0.2,border=0.6,arrowsize=1.5]{->}(-1.5,1.5)(2,4)(3,12.5)
\pscurve[linewidth=0.2](-1.5,1.5)(-1,7)(0,12.5)
\psline[linewidth=0.2,arrowsize=1.5]{->}(-3,-1.5)(-1.5,1.5)
\end{pspicture}
\captionof{figure}{Reidemeister move~$\mathrm{IV}^u$ for another orientation} \label{pic:RMovesForWellOrGraphsProof}
\end{center}

\begin{center}
\begin{pspicture}(-7,-2.5)(13,13)
\pscurve[linewidth=0.2,arrowsize=1.5]{->}(0,2)(2,5)(-3,12.5)
\pscurve[linewidth=0.2,border=0.6,arrowsize=1.5]{->}(3,12.5)(-2,5)(0,2)
\psline[linewidth=0.2,arrowsize=1.5]{->}(0,2)(0,-2.5)
\psline[linewidth=0.2](0,2)(1,3)
\rput(9,5){$\overset{\text{R}\mathrm{IV}^u}{\longleftrightarrow}$}
\end{pspicture}
\begin{pspicture}(-7,-2.5)(17,13)
\pscurve[linewidth=0.2,arrowsize=1.5]{->}(0,7)(2,6)(3.5,2)(5,4)(-2,6)(-3,12.5)
\psline[linewidth=0.2,border=0.6,arrowsize=1.5]{->}(0,7)(0,-2.5)
\pscurve[linewidth=0.2,border=0.6](0,7)(2,6)(3.5,2)(5,4)
\psline[linewidth=0.2](0,7)(0,-2.5)
\pscurve[linewidth=0.2,arrowsize=1.5]{->}(3,12.5)(-2,10)(0,7)
\rput(13,5){$\overset{\text{R}\mathrm{I}}{\longleftrightarrow}$}
\end{pspicture}
\begin{pspicture}(-7,-2.5)(16,13)
\pscurve[linewidth=0.2,arrowsize=1.5]{->}(0,7)(2,6)(3.5,4)(2,2)(-2,6)(-3,12.5)
\psline[linewidth=0.2,border=0.6,arrowsize=1.5]{->}(0,7)(0,-2.5)
\psline[linewidth=0.2](0,7)(1,6.5)
\psline[linewidth=0.2](0,7)(0,-2.5)
\psline[linewidth=0.2,arrowsize=1.5]{->}(3,12.5)(0,7)
\rput(12,5){$\overset{\text{R}\mathrm{V}^u}{\longleftrightarrow}$}
\end{pspicture}
\begin{pspicture}(-7,-2.5)(16,13)
\pscurve[linewidth=0.2,arrowsize=1.5]{->}(0,7)(-2,5)(3,1.5)(-1.5,1)(-3.5,5)(-3,12.5)
\psline[linewidth=0.2,border=0.6,arrowsize=1.5]{->}(0,7)(0,-2.5)
\psline[linewidth=0.2](0,7)(-1,6.3)
\psline[linewidth=0.2](0,7)(0,-2.5)
\psline[linewidth=0.2,arrowsize=1.5]{->}(3,12.5)(0,7)
\rput(12,5){$\overset{\text{R}\mathrm{II}}{\longleftrightarrow}$}
\end{pspicture}
\begin{pspicture}(-7.5,-2.5)(15,13)
\psline[linewidth=0.2,arrowsize=1.5]{->}(0,2)(0,-2.5)
\pscurve[linewidth=0.2,arrowsize=1.5]{->}(0,2)(-2,5)(-3,12.5)
\pscurve[linewidth=0.2,arrowsize=1.5]{->}(3,12.5)(2,5)(0,2)
\end{pspicture}

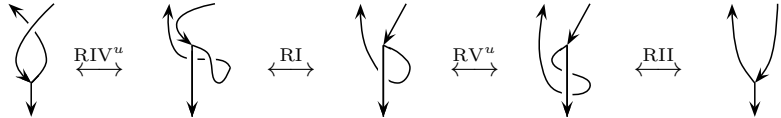
\captionof{figure}{Reidemeister move~$\mathrm{V}^u$ for another orientation} \label{pic:RMovesForWellOrGraphsProof2}
\end{center}
\end{proof}

Although our orientation restriction prevents one from working with
arbitrary oriented graphs, unoriented graphs can be dealt with
thanks to the following observation:

\begin{proposition}\label{P:UnVsWellOrient}
Any abstract or knotted $3$-valent graph can be well-oriented.
\end{proposition}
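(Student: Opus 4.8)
The plan is to forget the knotting entirely: an orientation of a knotted graph is just an orientation of its underlying abstract $3$-valent (multi)graph $G$, and the embedding in~$\RR^3$ imposes no constraint on which orientations are admissible. Components of~$G$ that are circles carrying no vertex may be oriented arbitrarily, so I only need to handle the cubic part. Being well-oriented means precisely that every vertex has in-degree~$1$ or~$2$ (equivalently, out-degree~$2$ or~$1$): a zip vertex is the case in-degree~$2$, out-degree~$1$, and an unzip vertex the case in-degree~$1$, out-degree~$2$, while the forbidden source and sink are exactly in-degree~$0$ and~$3$. Thus the statement reduces to the purely combinatorial claim that a finite cubic multigraph admits an orientation with all in-degrees in~$\{1,2\}$.

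First I would record that the number of vertices is even: summing degrees gives $3|V| = 2|E|$, so $3|V|$ is even, hence $|V|$ is even. This lets me fix an arbitrary pairing of the vertices into $|V|/2$ pairs and add one new edge for each pair; call the resulting set of new edges~$M$, a perfect matching on the vertex set (parallel edges created in the process cause no trouble). Every vertex now has degree~$4$ in the enlarged multigraph $G' = G \cup M$.

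Next, since $G'$ is $4$-regular, each of its connected components has all vertices of even degree and therefore carries an Euler circuit. Orienting the edges of every component along its Euler circuit yields an orientation of~$G'$ in which in-degree equals out-degree at each vertex; as the degree is~$4$, every vertex acquires in-degree~$2$ and out-degree~$2$. Loops are traversed once, contributing one unit to both the in- and the out-degree, so this step remains correct in the presence of loops and parallel edges.

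Finally I would restrict this orientation to the edges of~$G$, that is, delete the matching edges~$M$. Since~$M$ meets every vertex in exactly one edge, deleting~$M$ removes a single incident edge at each vertex, and removing one edge from a balanced $(2,2)$ vertex leaves either $(1,2)$ or $(2,1)$, according to whether the deleted edge pointed inward or outward. In every case the in-degree lands in~$\{1,2\}$, so each vertex of~$G$ becomes a zip or an unzip vertex and the graph is well-oriented. The only points needing care are the evenness of~$|V|$, which is what makes the auxiliary matching available, and the behaviour of loops and parallel edges in the Eulerian step; both are handled as indicated, and I do not expect any genuine obstacle beyond this bookkeeping.
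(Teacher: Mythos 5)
Your proof is correct, but it takes a genuinely different route from the paper's. You reduce to the abstract cubic multigraph, observe that $3|V|=2|E|$ forces $|V|$ to be even, adjoin an arbitrary perfect matching $M$ on the vertex set to obtain a $4$-regular multigraph, orient each component along an Euler circuit (so every vertex becomes balanced with in- and out-degree $2$), and then delete $M$, leaving every vertex with in-degree in $\{1,2\}$; all the degenerate cases (loops, parallel edges, vertex-free circle components) are handled correctly. The paper instead argues greedily and self-containedly: it decomposes the edge set into maximal trails, using the parity of valencies to show that a maximal trail exhausts the valency of its distinct endpoints (otherwise it could be prolonged), so that every vertex is \emph{internal} to the first trail through it and hence acquires at least one incoming and one outgoing edge once each trail is oriented along its direction of travel. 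Your argument buys a clean reduction to a standard, citable fact (existence of Euler circuits in connected even-degree multigraphs) and produces an orientation that is as balanced as possible; the paper's argument buys independence from any auxiliary construction (no added edges, no evenness-of-$|V|$ step) and works directly for any graph whose vertices all have odd valency, which is the form in which the paper states and iterates its deletion argument. The two are comparable in length, and the parity bookkeeping you do at the matching stage plays the same role as the parity argument the paper makes about deleted valencies.
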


\begin{proof}
Take an abstract unoriented graph~$\Gamma$. Suppose all its vertices to be of odd valency. We call a \textit{path} a sequence of its pairwise distinct edges $e_1, \ldots, e_k$, the endpoints $(s_i,t_i)$ of each~$e_i$ being ordered,  such that $t_i$ and $s_{i+1}$ coincide for each $i$. 
Choose a \textit{maximal path}~$\gamma$ in~$\Gamma$ --- i.e., a path which is not a sub-path of a longer one. Deleting~$\gamma$ from~$\Gamma$ and forgetting all the isolated vertices possibly formed after that, one gets a graph~$\Gamma\setminus\gamma$, whose vertices are still of odd valency. Indeed, the valency subtracted from internal vertices of~$\gamma$ is even (since we enter and leave them the same number of times); the same argument works for the first and last vertices if they coincide (in which case we call them internal as well); if they are distinct, then their full valencies are subtracted --- otherwise~$\gamma$ could be prolonged, which would contradict its maximality --- and so they are discarded. Now let~$\Gamma$ be $3$-valent. Iterating the argument above, one presents~$\Gamma$ as a disjoint union of paths, each vertex occurring in at most two paths and being internal for the first path it belongs to. Orienting each edge~$e_i$ in each path from~$s_i$ to~$t_i$, one well-orients~$\Gamma$.  
\end{proof}

Thus, in order to compare two unoriented graphs, it is sufficient to
compare the sets of their well-oriented versions.

\subsection*{A new coloring approach via qualgebras}

Now, for well-oriented graph diagrams, consider coloring rule from Figure~\ref{pic:Colorings}\rcircled{B}, where~$\op$ is another binary operation on the quandle $(\Q,\lhd)$. Trying to render these rules topological, one arrives to the notion of qualgebra, central to this paper.

\begin{definition}\label{D:Qualgebra}
A set~$\Q$ endowed with two binary operations~$\lhd$ and~$\op$ is called a \emph{qualgebra} if it satisfies Axioms \eqref{E:SD}-\eqref{E:QAComm} (see page~\pageref{E:SD}).
\end{definition}

 The term ``qualgebra'' comes from terms ``quandle'' and ``algebra'' zipped together, as shown on Figure~\ref{pic:Term}. It underlines the presence of two interacting operations in this structure.

 Algebraically, this definition can be restated in a more structural way. Namely, consider a set~$\Q$ endowed with two binary operations~$\lhd$ and~$\op$, and define an operator 
\begin{align*}
\sigma_\lhd: \Q \times \Q &\longrightarrow \Q \times \Q,\\
(a,b) & \longmapsto (b,a \lhd b).
\end{align*} 
Then $(\Q,\lhd,\op)$ is a qualgebra if and only if $(\Q,\sigma_\lhd,\op)$ is a \textit{braided algebra} which is \textit{braided-commutative} but not necessarily associative, and such that the Yang-Baxter operator~$\sigma_\lhd$ preserves the diagonal of~$\Q$.
 
 Remark that Axiom~\eqref{E:SD} could be omitted from the definition, as it is a consequence of~\eqref{E:QA1} and~\eqref{E:QAComm}:  
$$(a\lhd b)\lhd c \overset{\eqref{E:QA1}}{=} a\lhd (b\op c) \overset{\eqref{E:QAComm}}{=} a\lhd (c\op (b \lhd c)) \overset{\eqref{E:QA1}}{=} (a\lhd c)\lhd (b \lhd c);$$
we will include or omit this axiom according to our needs.

For further reference, let us also note the compatibility relations between operations~$\op$ and~$\wlhd$.

\begin{lemma}\label{L:QualgebraProperties}
A qualgebra $(\Q,\lhd,\op)$ enjoys the following properties:
\begin{align}
&a\wlhd (b\op c) = (a\wlhd c)\wlhd b,\label{E:QA1'}\\
&(a\op b)\wlhd c = (a\wlhd c) \op (b\wlhd c),\label{E:QA2'}\\
&(a \wlhd b)\op b = b \op a.\label{E:QAComm'}
\end{align}
\end{lemma}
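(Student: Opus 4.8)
The plan is to recognize the three identities \eqref{E:QA1'}--\eqref{E:QAComm'} as the ``mirror images'' of the qualgebra axioms \eqref{E:QA1}--\eqref{E:QAComm}, obtained by replacing $\lhd$ with its inverse operation $\wlhd$. The single tool driving every step is the invertibility axiom \eqref{E:Inv}, which asserts that for each fixed $c \in \Q$ the maps $x \mapsto x \lhd c$ and $x \mapsto x \wlhd c$ are mutually inverse bijections of~$\Q$. In particular, to establish an identity of the shape $w \wlhd c = z$ it suffices to prove $z \lhd c = w$, since $\lhd c$ is injective.

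For \eqref{E:QA1'}, I would set $z = (a \wlhd c)\wlhd b$ and apply $\lhd (b \op c)$ to it. Using \eqref{E:QA1} to split $z \lhd (b \op c) = (z \lhd b)\lhd c$, and then \eqref{E:Inv} twice (first $((a\wlhd c)\wlhd b)\lhd b = a \wlhd c$, then $(a \wlhd c)\lhd c = a$), one gets $z \lhd (b \op c) = a$, whence $z = a \wlhd (b \op c)$. The computation for \eqref{E:QA2'} is of the same flavour but shorter: applying $\lhd c$ to $z = (a \wlhd c)\op (b \wlhd c)$ and invoking the distributivity axiom \eqref{E:QA2} yields $z \lhd c = ((a\wlhd c)\lhd c)\op((b\wlhd c)\lhd c) = a \op b$ after two uses of \eqref{E:Inv}, so $z = (a\op b)\wlhd c$.

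Identity \eqref{E:QAComm'} does not even need the ``apply the inverse'' trick: it follows by a direct substitution into the semi-commutativity axiom \eqref{E:QAComm}. Replacing $a$ by $a \wlhd b$ in $a \op b = b \op (a \lhd b)$ gives $(a\wlhd b)\op b = b \op ((a \wlhd b)\lhd b)$, and the inner expression collapses to $a$ by \eqref{E:Inv}, leaving $(a\wlhd b)\op b = b \op a$.

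The argument is essentially mechanical, so there is no serious obstacle; the only point requiring care is \eqref{E:QA1'}. There one must apply the \emph{composite} translation $\lhd(b\op c)$ rather than a single $\lhd c$, and keep the order of $b$ and $c$ straight: the right-hand side $(a\wlhd c)\wlhd b$ applies $\wlhd c$ before $\wlhd b$, which is exactly the reversal forced by inverting the composite ``$\lhd b$ then $\lhd c$'' hidden inside \eqref{E:QA1}. Once this ordering is recognised, all three verifications are immediate.
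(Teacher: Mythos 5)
Your proposal is correct and follows essentially the same route as the paper: each mirror identity is obtained from the corresponding axiom \eqref{E:QA1}, \eqref{E:QA2}, or \eqref{E:QAComm} by exploiting the invertibility axiom \eqref{E:Inv} (the paper proves \eqref{E:QA1'} by applying \eqref{E:QA1} to $a\wlhd(b\op c)$, $b$, $c$ and then inverting with $x\mapsto(x\wlhd c)\wlhd b$, which is just your argument read in the opposite direction, and declares the other two cases similar). The only difference is presentational: you spell out all three verifications, including the direct substitution $a\mapsto a\wlhd b$ for \eqref{E:QAComm'}, where the paper leaves them to the reader.
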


\begin{proof}
Let us show~\eqref{E:QA1'}, the proof for the remaining relations being similar. Applying~\eqref{E:QA1} to elements $a \wlhd (b\op c)$, $b$ and $c$, one gets
$$(a \wlhd (b \op c)) \lhd (b \op c) = ((a \wlhd (b \op c)) \lhd b) \lhd c.$$
The left-hand side equals~$a$ because of~\eqref{E:Inv}. Now, apply the map $x \mapsto (x \wlhd c) \wlhd b$ to both sides:
$$(a \wlhd c) \wlhd b = ((((a \wlhd (b \op c)) \lhd b) \lhd c) \wlhd c) \wlhd b.$$
Using~\eqref{E:Inv} for the right-hand side this time, one obtains~\eqref{E:QA1'}.
\end{proof}

\medskip

Now, returning to colorings of graphs, one gets

\begin{proposition}\label{P:QualgebraColor}
Take a set~$\Q$ endowed with two binary operations~$\lhd$ and~$\op$. Coloring rules from Figure~\ref{pic:Colorings}\rcircled{A}\&\rcircled{B} are topological if and only if $(\Q,\lhd,\op)$ is a qualgebra.
\end{proposition}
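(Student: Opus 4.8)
The plan is to read Definition~\ref{D:TopColRules} as a condition imposed separately by each R-move and to verify it move by move. Fixing one R-move, a coloring $\C'$ of the new diagram agreeing with a given $\C$ outside the ball is the same datum as a coloring of the local ``after'' sub-diagram that restricts to the prescribed colors on the arcs meeting the boundary of the ball; topologicality of that move thus means that, for every boundary coloring realized by the ``before'' sub-diagram, there is \emph{exactly one} coloring of the ``after'' sub-diagram inducing the same boundary colors (and, running the move backwards, vice versa). So for each move I must check two things: that the internal arcs of each local picture are \emph{uniquely determined} by the boundary colors, and that the constraints on the boundary colors imposed by the two pictures \emph{coincide}.

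First I would cut down the moves to check. Each single move supplies a bijection $\Col_{\SS}(\D)\leftrightarrow\Col_{\SS}(\D')$ fixing the exterior, and such bijections compose; hence topologicality for a generating set of moves propagates to every move expressible through it. By Lemma~\ref{L:RMovesForWellOrGraphs} it therefore suffices to treat the oriented versions of R$\mathrm I$--R$\mathrm{III}$ together with the specific moves R$\mathrm{IV}^{z,u}$, R$\mathrm{V}^{z,u}$, R$\mathrm{VI}^{z,u}$ of Figure~\ref{pic:RMovesForWellOrGraphs}; conversely, topologicality of \emph{all} moves forces it for these generators, so no generality is lost either way. The moves R$\mathrm I$--R$\mathrm{III}$ involve crossings only, hence only the rule of Figure~\ref{pic:Colorings}\rcircled{A}; by Example~\ref{EX:QuandleCol} they are topological if and only if $(\Q,\lhd)$ is a quandle, i.e.\ \eqref{E:SD}--\eqref{E:Idem} hold. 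This is also where invertibility provides the inverse operation~$\wlhd$, which I use below.

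It remains to analyse the three vertex moves. In each picture I propagate colors along the oriented strands: at a crossing the rule of Figure~\ref{pic:Colorings}\rcircled{A} and its $\wlhd$-inverse turn an incoming color into a unique outgoing one in either direction, and at a zip vertex the rule of Figure~\ref{pic:Colorings}\rcircled{B} sends the two inputs to the single output $a\op b$. Care is needed only at the unzip vertices, where the single ``merged'' arc does not determine the two ``split'' ones: there I read the local picture starting from the two-strand side, whose colors are fixed on the boundary, and propagate \emph{towards} the single arc, so that the vertex yields one scalar equation rather than an ambiguous factorization of~$\op$. With the propagation direction so chosen, the internal arcs are uniquely forced in both the ``before'' and ``after'' pictures of each move, and the single surviving boundary constraint reads
\begin{align*}
\text{R}\mathrm{IV}:&\quad a\lhd (b\op c) = (a\lhd b)\lhd c,\\
\text{R}\mathrm{V}:&\quad a\op b = b \op (a\lhd b),\\
\text{R}\mathrm{VI}:&\quad (a\op b)\lhd c = (a\lhd c) \op (b\lhd c),
\end{align*}
that is, \eqref{E:QA1}, \eqref{E:QAComm} and \eqref{E:QA2}, each quantified over all colors since the free boundary colors of the ``before'' picture range over all of~$\Q$. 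Hence each vertex move is topological exactly when its axiom holds universally, and collecting the six conditions recovers precisely Definition~\ref{D:Qualgebra}.

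The main obstacle is the uniqueness half at the unzip vertices, together with the bookkeeping of orientations and over/under strands: one must ensure that in every generating picture the colors can be propagated in a direction that never forces one to ``factor'' the operation~$\op$, so that the move imposes a single equation (the axiom) rather than an existence-and-uniqueness condition on an internal factorization. The reduction via Lemma~\ref{L:RMovesForWellOrGraphs} is what keeps this analysis finite; once the propagation direction is fixed correctly, matching the surviving equation with the stated axiom is a direct reading off Figure~\ref{pic:RMovesForWellOrGraphs}.
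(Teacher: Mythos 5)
Your proposal is correct and follows essentially the same route as the paper's proof: reduce to the generating moves of Lemma~\ref{L:RMovesForWellOrGraphs}, dispose of R$\mathrm{I}$--R$\mathrm{III}$ via Example~\ref{EX:QuandleCol}, and for each vertex move observe that the boundary colors on one side of the move determine all internal colors uniquely, so that topologicality of that move amounts to a single universally quantified equation.

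One imprecision is worth fixing. For the \emph{unzip} versions of the moves, the propagation you describe (from the two-strand side, using $\wlhd$ when a crossing is traversed against orientation) does not literally produce Axioms~\eqref{E:QA1}, \eqref{E:QA2}, \eqref{E:QAComm}, but their $\wlhd$-counterparts \eqref{E:QA1'}--\eqref{E:QAComm'}: for instance, R$\mathrm{IV}^u$ yields $a\wlhd(b\op c)=(a\wlhd c)\wlhd b$ rather than $a\lhd(b\op c)=(a\lhd b)\lhd c$. These primed relations are equivalent to the stated axioms only in the presence of invertibility~\eqref{E:Inv} --- which is available at that point, since R$\mathrm{II}$ has already forced the quandle axioms --- and this identification is precisely Lemma~\ref{L:QualgebraProperties}, which the paper invokes explicitly in its proof. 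So your claim that ``collecting the six conditions recovers precisely Definition~\ref{D:Qualgebra}'' holds only up to this easy but non-vacuous extra step, and your write-up should say so; as stated, a reader carrying out your propagation on R$\mathrm{IV}^u$, R$\mathrm{V}^u$, R$\mathrm{VI}^u$ would find equations that do not match the ones you list.
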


\begin{proof}
The equivalence between the compatibility of the coloring rule~\ref{pic:Colorings}\rcircled{A} with Reidemeister moves $\mathrm{I}$-$\mathrm{III}$ on the one hand, and Axioms \eqref{E:SD}-\eqref{E:Idem} on the other hand, was discussed in Example~\ref{EX:QuandleCol}. Let us turn to the remaining three moves, with orientations from Lemma~\ref{L:RMovesForWellOrGraphs}. Analyzing move R$\mathrm{IV}^z$ (Figure~\ref{pic:Qualgebras}), one notices that on each side the three colors on the top completely determine all the remaining colors, in particular the colors on the bottom. Then, the coloring bijection from Definition~\ref{D:TopColRules} takes place if and only if the induced bottom colors coincide on the two sides, which is equivalent to Axiom~\eqref{E:QA1}. An analogous argument shows that for move R$\mathrm{IV}^u$, the coloring bijection is equivalent to Axiom~\eqref{E:QA1'}, which, in the presence of~\eqref{E:Inv}, is the same as~\eqref{E:QA1} (cf. the proof of Lemma~\ref{L:QualgebraProperties}). Similarly, one checks that for both the zip and unzip versions of R$\mathrm{VI}$ (respectively, R$\mathrm{V}$) the coloring bijection is equivalent to Axiom~\eqref{E:QA2} (respectively, \eqref{E:QAComm}). 

\begin{center}
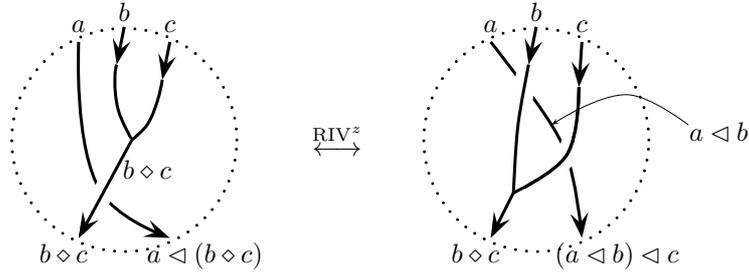
 
\psset{unit=2mm}
\begin{pspicture}(-7.5,-3.5)(19,13.5)
\pscircle[linewidth=0.2,linestyle=dotted](0,5){7.5}
\pscurve[linewidth=0.2,arrowsize=1.2]{->}(-3,11.5)(-1.5,1.5)(3,-1.5)
\psline[linewidth=0.2,arrowsize=1.2]{->}(3,11.5)(2.5,9)
\pscurve[linewidth=0.2](2.5,9)(2.3,8)(1.5,6)(0.5,5)
\psline[linewidth=0.2,arrowsize=1.2]{->}(0,12.5)(-0.5,10)
\pscurve[linewidth=0.2](-0.5,10)(-0.6,9.5)(-0.4,7)(0.5,5)
\psline[linewidth=0.2,border=0.6,arrowsize=1.2]{->}(0.5,5)(-3,-1.5)
\rput(-3,12.5){$a$}
\rput(0,13.5){$b$}
\rput(3,12.5){$c$}
\rput(-4,-2.5){$b \op c$}
\rput(1.5,3){$b \op c$}
\rput(5.3,-2.7){$a \lhd (b \op c)$}
\rput(14,5){$\overset{\text{R}\mathrm{IV}^z}{\longleftrightarrow}$}
\end{pspicture}
\begin{pspicture}(-7.5,-3.5)(15,13.5)
\pscircle[linewidth=0.2,linestyle=dotted](0,5){7.5}
\pscurve[linewidth=0.2,arrowsize=1.2]{->}(-3,11.5)(1,6)(2,4)(3,-1.5)
\psline[linewidth=0.2,arrowsize=1.2]{->}(3,11.5)(2.8,8.5)
\pscurve[linewidth=0.2,border=0.6](2.8,8.5)(2,4)(-1.5,1.5)
\psline[linewidth=0.2,arrowsize=1.2]{->}(0,12.5)(-0.5,10)
\pscurve[linewidth=0.2,border=0.6](-0.5,10)(-1.2,6)(-1.5,1.5)
\pscurve[linewidth=0.2](2.8,8.5)(2,4)(-1.5,1.5)
\psline[linewidth=0.2,arrowsize=1.2]{->}(-1.5,1.5)(-3,-1.5)
\rput(-3,12.5){$a$}
\rput(0,13.5){$b$}
\rput(3,12.5){$c$}
\rput(-4,-2.5){$b \op c$}
\rput(5.3,-2.7){$(a \lhd b) \lhd c$}
\rput(12,5.5){$a \lhd b$}
\pscurve[linewidth=0.05]{->}(10,6)(6,8)(1,6)
\end{pspicture}
\psset{unit=1mm}
\captionof{figure}{Qualgebra axioms via coloring rules for graph diagrams}\label{pic:Qualgebras}\qedhere
\end{center}
\end{proof}

\begin{remark}
Certainly, we could have used different operations~$\op_z$ and~$\op_u$ for coloring rules around zip and unzip vertices. However, our simplified choice already produces powerful invariants; moreover, it is natural if one thinks in terms of generalizations of (multiple) conjugation quandle colorings of graphs.
\end{remark}

Lemma~\ref{L:CountInvar} now allows one to construct qualgebra coloring invariants for graphs:
 
\begin{corollary}\label{L:QualgebraCountInvar}
Take a qualgebra $(\Q,\lhd,\op)$ and consider $\Q$-coloring rules from Figure~\ref{pic:Colorings}\rcircled{A}\&\rcircled{B}. The (possibly infinite) quantity $\#\Col_{\Q}(\D)$ does not depend on the choice of a diagram~$\D$ representing a well-oriented $3$-valent knotted graph~$\Gamma$.
\end{corollary}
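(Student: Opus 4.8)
The plan is to assemble the corollary from the general machinery already in place rather than to check any Reidemeister move by hand. The key is the combination of two facts: Proposition~\ref{P:QualgebraColor}, which tells us that the $\Q$-coloring rules of Figure~\ref{pic:Colorings}\rcircled{A}\&\rcircled{B} are topological in the sense of Definition~\ref{D:TopColRules} precisely because $(\Q,\lhd,\op)$ is a qualgebra; and Lemma~\ref{L:CountInvar}, which upgrades topological coloring rules into an invariant of R-equivalence classes. Once both ingredients are available, the only remaining task is to certify that any two diagrams of a single well-oriented graph~$\Gamma$ are R-equivalent through the moves for which topological-ness has been verified.

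First I would recall the R-move setup for graphs. By the Kauffman--Yamada--Yetter theorem the six moves of Figure~\ref{pic:RMovesGraphs} generate isotopy of knotted $3$-valent graphs in~$\RR^3$, so R-equivalence classes of graph diagrams coincide with isotopy classes of represented graphs. Since~$\Gamma$ is fixed and well-oriented, any two of its diagrams~$\D$ and~$\D'$ differ by a finite sequence of these moves, each performed in the well-oriented setting, so that no source or sink vertex ever appears.

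Next I would reduce to the generating set of moves handled in Proposition~\ref{P:QualgebraColor}. That proposition verified the coloring bijection of Definition~\ref{D:TopColRules} only for the specific orientations of R$\mathrm{IV}$--R$\mathrm{VI}$ displayed in Figure~\ref{pic:RMovesForWellOrGraphs}, together with the oriented versions of R$\mathrm{I}$--R$\mathrm{III}$. By Lemma~\ref{L:RMovesForWellOrGraphs} (Turaev's trick) these moves imply all remaining well-oriented versions of R$\mathrm{IV}$--R$\mathrm{VI}$; concretely, every such move is realized as a finite composition of the generating ones. Because a composition of coloring bijections is again a bijection coinciding with the original coloring outside the affected balls, the topological property automatically propagates from the generating moves to all well-oriented moves. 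Hence the coloring rules are topological for the entire well-oriented R-move set.

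Finally, I would invoke Lemma~\ref{L:CountInvar}: for topological $\Q$-coloring rules, R-equivalent diagrams have equinumerous coloring sets, so $\#\Col_{\Q}(\D) = \#\Col_{\Q}(\D')$ (with the value~$\infty$ allowed). Applied to the R-equivalent diagrams~$\D$ and~$\D'$ of~$\Gamma$, this yields the claim. The one point demanding care --- and the nearest thing to an obstacle --- is the legitimacy of the reduction in the previous paragraph: one must make sure that restricting attention to well-oriented diagrams never forces a move outside the well-oriented class, which is exactly what Lemma~\ref{L:RMovesForWellOrGraphs} guarantees, and that Proposition~\ref{P:QualgebraColor} indeed covers every generator of this restricted move set.
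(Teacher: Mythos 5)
Your proposal is correct and follows essentially the same route as the paper: Proposition~\ref{P:QualgebraColor} (the rules are topological) combined with Lemma~\ref{L:CountInvar} (topological rules yield counting invariants of R-equivalence classes) and the Kauffman--Yamada--Yetter correspondence between R-equivalence and isotopy. The extra care you take with Lemma~\ref{L:RMovesForWellOrGraphs} is already built into the paper's proof of Proposition~\ref{P:QualgebraColor}, which only verifies the generating orientations of R$\mathrm{IV}$--R$\mathrm{VI}$ for exactly this reason, so your argument is a slightly more explicit version of the same proof.
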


\begin{proof}
Proposition~\ref{P:QualgebraColor} guarantees that the coloring rules in question are topological.
Lemma~\ref{L:CountInvar} then tells that the function $\D \mapsto \#\Col_{\Q}(\D)$ is well-defined on R-equivalence classes of diagrams, which, according to \cite{KauffmanGraphs,YamadaGraphs,YetterGraphs}, correspond to isotopy classes of graphs.
\end{proof}

One thus gets a systematic way of producing invariants of well-oriented (or unoriented, cf. Proposition~\ref{P:UnVsWellOrient}) graphs.

\subsection*{Group qualgebras}

We now show that groups are an important source of qualgebras, playing also a significant motivational role.

\begin{example}\label{EX:GroupQualgebra}
A conjugation quandle together with the group multiplication operation $a \op b = ab$ is a qualgebra, called a \textit{group qualgebra}; a direct verification of all the axioms is easy. For this qualgebra, the coloring rule from Figure~\ref{pic:Colorings}\rcircled{B} repeats that from Figure~\ref{pic:QuandlesForGraphs}\rcircled{B}. Thus our qualgebra coloring rules and resulting graph invariants generalize the group coloring rules and corresponding invariants.
\end{example}

While from the topological perspective quandle axioms \eqref{E:SD}-\eqref{E:Idem} can be viewed as algebraic incarnations of Reidemeister moves for knots, from the algebraic viewpoint they are often interpreted as an axiomatization of the conjugation operation in a group. Concretely, if a relation involving only conjugation holds in every group, then it can be deduced from the quandle axioms (cf. \cite{Joyce,Dehornoy2}). In a similar way, as shown in the (proof of) Proposition~\ref{P:QualgebraColor}, topologically additional qualgebra axioms \eqref{E:QA1}-\eqref{E:QAComm} can be regarded as algebraic incarnations of specific R-moves for $3$-valent graphs. Algebraically, they encode major relations between conjugation and multiplication operations in a group (cf. Table~\ref{tab:3levels}). However, we shall see below that not all the conjugation/multiplication relations are captured by the qualgebra structure.

\begin{center}
\begin{tabular}{|c|c|c|}
\hline
\textbf{abstract level} & quandle axioms & specific qualgebra axioms \\
\hline
\textbf{group level} & conjugation & conjugation/multiplication interaction \\
\hline
\textbf{topological level} & moves R$\mathrm{I}$-R$\mathrm{III}$ & moves R$\mathrm{IV}$-R$\mathrm{VI}$ \\
\hline
\end{tabular}
\captionof{table}{Different viewpoints on quandles and qualgebras} \label{tab:3levels}
\end{center}

A slight variation of Example~\ref{EX:GroupQualgebra} is first due:

\begin{example}\label{EX:SubGroupQualgebra}
New examples of qualgebras can be derived by considering sub-qualgebras of given qualgebras. In the case of group qualgebras, these are simply subsets closed under conjugation and multiplication operations, but not necessarily under taking inverse. For instance, positive integers~$\NN$ form a sub-qualgebra of the group qualgebra of~$\ZZ$.
\end{example}

Note that sub-qualgebras of group qualgebras do not necessarily contain the neutral element or inverses. However, they clearly remain associative:

\begin{definition}\label{D:AssQualgebra}
A qualgebra $(\Q,\lhd,\op)$ is called \emph{associative} if the operation~$\op$ is such, i.e., if for all elements of~$\Q$ one has
\begin{equation}\label{E:Ass}
(a \op b) \op c = a \op (b \op c).
\end{equation}
\end{definition}

Examples of non-associative qualgebras will be given in Section~\ref{sec:Qualgebras4}.

Recall that in the quandle setting, the free quandle on a set~$\SS$ can be seen as the $\SS$-generated sub-quandle of the conjugation quandle of the free group on~$\SS$. This explains the fundamental role of conjugation quandles among all quandles. One would expect a similar result in the associative qualgebra setting (the necessity to impose the associativity is explained above). However, this is false:

\begin{proposition}\label{P:QualgebraAxiomatize}
Take a set~$\SS$ with at least $2$ elements. Consider the map from the free associative qualgebra $FAQA_{\SS}$ on~$\SS$ to the group qualgebra of the free group $FG_{\SS}$ on~$\SS$, sending every $a \in \SS$ to itself. This map is not injective.
\end{proposition}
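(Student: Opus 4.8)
I read the statement as equivalent to an assertion about equational laws. Since $FAQA_{\SS}$ is \emph{free} among associative qualgebras, two terms $s,t$ are equal in it exactly when $s=t$ is derivable from \eqref{E:SD}--\eqref{E:QAComm} together with associativity \eqref{E:Ass}; and since $FG_{\SS}$ is the free group on~$\SS$, the image equality $\pi(s)=\pi(t)$ holds precisely when the identity $s=t$ holds in \emph{every} group qualgebra. Hence non-injectivity of the canonical map $\pi\colon FAQA_{\SS}\to FG_{\SS}$ is equivalent to producing an equational law valid in all group qualgebras that is \emph{not} a consequence of the associative-qualgebra axioms. My plan is therefore to exhibit two explicit terms $s\neq t$ in $FAQA_{\SS}$ (two generators $a,b\in\SS$ suffice) with $\pi(s)=\pi(t)$, and to certify both halves separately.

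The natural source of such a collapse is that group multiplication is cancellative whereas the free $\op$ need not be. Concretely, it suffices to find $x,y,c$ with $x\op c = y\op c$ \emph{derivable} (so it holds in $FAQA_{\SS}$) while $x\ne y$: then in the group $\pi(x)\pi(c)=\pi(y)\pi(c)$ forces $\pi(x)=\pi(y)$, so the pair $(x,y)$ witnesses non-injectivity. Equivalently one exploits that in a group $a\lhd b$ is the \emph{unique} solution $z$ of $b\op z = a\op b$, and looks for two qualgebra terms that are both such solutions in $FG_{\SS}$ but are not provably equal. For the half ``$\pi(s)=\pi(t)$'' one simply computes in the free group. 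For the half ``$s\ne t$ in $FAQA_{\SS}$'' the key point is that a \emph{group} qualgebra can never help: any homomorphism from $FAQA_{\SS}$ into a group qualgebra factors through $\pi$, so it cannot separate $\pi$-equal elements. Thus the certificate of inequality must be a homomorphism $h\colon FAQA_{\SS}\to Q$ into a \emph{non-group} associative qualgebra~$Q$, with $h(s)\ne h(t)$; constructing such a $Q$ is the real content.

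The main obstacle is exactly this construction, and it is genuinely delicate for two reasons. First, the obvious candidate coincidences are all already derivable, so they do not serve: for instance $a\lhd(a\op b)=a\lhd b$ follows from \eqref{E:QA1} and \eqref{E:Idem}; $(a\wlhd b)\op b = b\op a$ is Lemma~\eqref{E:QAComm'}; and $a\op(b\lhd a)=b\op a$ is just \eqref{E:QAComm}. The sought witness must be a coincidence \emph{not} reachable by axiom rewriting, which (because the axioms are surprisingly complete on short words) one expects only for longer terms. Second, there is a structural tension constraining $Q$: if the underlying quandle of $Q$ is faithful conjugation in a group, then \eqref{E:QA1} forces $\op$ to coincide with the group product modulo the centre, so $Q$ is again group-like and useless; and if instead the conjugation is trivial, then \eqref{E:QAComm} forces $\op$ to be commutative, so $h$ only records the abelianization and again cannot separate $\pi$-equal terms. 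Hence $Q$ must have a deliberately \emph{exotic} quandle part together with a non-cancellative $\op$.

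Concretely I would proceed as follows. I would set up a rewriting system generated by \eqref{E:SD}--\eqref{E:QAComm} and \eqref{E:Ass}, aim for a normal form deciding equality in $FAQA_{\SS}$, and in parallel evaluate candidate positive words in $FG_{\SS}$; the least length at which ``equal in $FG_{\SS}$'' and ``not axiom-equivalent'' first diverge yields the pair $(s,t)$. Simultaneously I would search low-order structures for a small associative non-cancellative qualgebra $Q$ (necessarily with degenerate quandle part, by the tension above) to serve as the separating target, and then verify $h(s)\ne h(t)$ there while $\pi(s)=\pi(t)$ by a direct free-group computation. The expected difficulty is entirely concentrated in pinning down this first non-derivable collapse and its non-group certificate; once $Q$ and the pair are in hand, both verifications are routine.
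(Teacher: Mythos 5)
Your reduction of the statement is sound: non-injectivity amounts to exhibiting terms $s\neq t$ in $FAQA_{\SS}$ with $\pi(s)=\pi(t)$, the inequality certificate cannot come from any group qualgebra (every morphism to a group qualgebra factors through~$\pi$), and your sanity checks on short words are correct. But what you have written is a research plan, not a proof: both of its essential ingredients are left unexecuted. You never produce the witness pair, and you never establish either of the two possible certificates of inequality --- a solved word problem for $FAQA_{\SS}$, or a separating non-group associative qualgebra. The paper supplies the first ingredient explicitly, namely
$(b \lhd a) \op (a \lhd b)$ and $((a \wlhd b) \lhd a) \op b$, both mapping to $a^{-1}bab^{-1}ab$ in $FG_{\SS}$ (Relation~\eqref{E:FreeQA}); and for the second it proves a genuine structure theorem (Lemma~\ref{L:FreeQA}): every element of $FAQA_{\SS}$ is a product $t_1\op\cdots\op t_n$ of reduced $\lhd$-terms, uniquely up to moves implementing~\eqref{E:QAComm}. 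The inequality then follows from a combinatorial invariant (the ``tail'' relation of Lemma~\ref{L:FreeQA2}) preserved by all such moves, which shows the left-hand product form can never be rewritten into the right-hand one. None of this is ``routine'': the normal-form lemma is hard enough that the paper defers its proof to a separate publication, and your proposal gives no indication of how your rewriting system would be shown confluent and terminating.

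Your fallback route --- searching for a small associative, non-cancellative qualgebra $Q$ and a morphism $h$ with $h(s)\neq h(t)$ --- is moreover demonstrably at risk of failing. One can check directly that the identity
$(b \lhd a) \op (a \lhd b) = ((a \wlhd b) \lhd a) \op b$
holds under \emph{every} assignment of $a,b$ in \emph{all nine} $4$-element qualgebras of Proposition~\ref{P:Qualgebras4} (there all right translations are involutions, $\op$ is commutative, and $r$ absorbs everything, which forces both sides to agree case by case), and it holds trivially in all trivial qualgebras, as you yourself observe. So a low-order search finds nothing through size $4$, and neither you nor the paper exhibits any finite separating model; the only certificate anyone has is the syntactic one inside the free object itself. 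The gap, concretely: identifying the collapse ``valid in groups but not derivable'' and certifying non-derivability is the entire content of the proposition, and your proposal contains neither.
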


The proof of this result is slightly technical and is therefore presented in Appendix~\ref{A:Proof}.

\subsection*{Related constructions and ``non-qualgebrizable'' quandles}

Group qualgebras and their sub-qualgebras are far from covering all examples of qualgebra structure. We have just seen a manifestation of this fact: Relation~\eqref{E:FreeQA}, even though automatic in group qualgebras, fails in some other associative qualgebras. Moreover, in Section~\ref{sec:Qualgebras4} we shall show that even in small size there are some exotic qualgebras exhibiting very ``non-group-like'' properties: they are neither cancellative, nor associative, nor unital. Our choice of qualgebra axioms, resulting in the structure's richness (illustrated in particular by such exotic examples), was dictated by the desired applications to graph invariants. Here we mention some related structures from the literature, appearing in different frameworks and exhibiting  dissimilar properties. 

First, observe that the associativity, absent from our topological picture, does become relevant when one works with \textit{handlebody-knots} (cf. \cite{IshiiHKnots}). In particular it appears, together with Axioms \eqref{E:SD}, \eqref{E:Inv}, \eqref{E:QA1} and \eqref{E:QA2}, in A.Ishii's definition of \textit{multiple conjugation quandle}, the latter being tailored for producing handlebody-knot invariants. Remark that algebraically, MCQs inherit many properties of groups, since they are formed by gluing several groups together.

Besides the topological and algebraic settings described above, Axioms \eqref{E:QA1}-\eqref{E:QAComm} also emerge in a completely different set-theoretical context. Namely, together with the associativity of~$\op$ and the existence of a neutral element~$1$ for~$\op$ satisfying moreover $1 \lhd a = 1$ and $a \lhd 1 = a$ for all $a \in \Q$, they define a \textit{(right-)distributive monoid} (or, in other sources, RD algebra). The examples of elementary embeddings, Laver tables and extended braids, all of which admit rich distributive monoid structures, have motivated an extensive study of the concept (cf. for instance \cite{DehornoyLDM,DrapalLDM,DrapalLDM2,DehornoyBinf}, or Chapter~$\mathrm{XI}$ of~\cite{Dehornoy2} for a comprehensive exposition). A weaker \textit{augmented (right-)distributive system} structure of P.Dehornoy obeys only three axioms: \eqref{E:SD}, \eqref{E:QA1}, and \eqref{E:QA2}; the major example here is that of parenthesized braids (cf. \cite{DehornoyParBr,DehornoyFreeALDS}). Our qualgebras are particular cases of augmented distributive systems.

\medskip
We finish with some remarks concerning the relations between quandle and qualgebra structures. Any quandle can be embedded (as a sub-quandle) into a qualgebra (cf. \cite{LebedQAAlg}). Further, some quandles can be upgraded to qualgebras using several different operations~$\op$ (cf. Section~\ref{sec:Qualgebras4} for examples). Here we give an example of a family of quandles  which can not be turned into qualgebras, and of a quandle admitting exactly one compatible operation~$\op$.

\begin{example}\label{EX:NonQualgebras}
A \textit{dihedral quandle} is the set $\ZZ/n\ZZ$ endowed with the operation $a \lhd b = 2b -a$ $(\operatorname{mod} n)$. Suppose that $\ZZ/n\ZZ$ can be endowed with an additional operation~$\op$ satisfying \eqref{E:QA1}. Then for all $a,b,c \in \ZZ/n\ZZ$, the element $(a\lhd b)\lhd c = 2c-2b +a$ would coincide with $a\lhd (b\op c) = 2(b\op c) -a$, thus $2a = 2(b\op c)-2c+2b$ would not depend on~$a$, which is impossible if $n \neq 2$.
\end{example}

\begin{example}\label{EX:UniqueQualgebras}
Consider the conjugation quandle of the symmetric group~$S_3$. As usual, operation $a \op b = ab$ turns it into a group quandle. Let us show that this is the only qualgebrization of this quandle. Indeed, Axiom~\eqref{E:QA1} imposes the values of $(12) \lhd (a \op b)$ and $(123) \lhd (a \op b)$ for all $a,b \in S_3$; it remains to show that the values $(12) \lhd x$ and $(123) \lhd x$ uniquely identify an $x\in S_3$. This follows by direct computations:
\begin{align*}
(12) \lhd x &= \begin{cases} (12) \text{ if } x \in \{\Id, (12)\}, \\
(23) \text{ if } x \in \{ (132), (13)\}, \\
(13) \text{ if } x \in \{ (123), (23)\}; \end{cases} &
(123) \lhd x &= \begin{cases} (123) \text{ if } x \in \{\Id, (123), (132)\}, \\
(213) \text{ if } x \in \{ (12), (23), (13)\}. \end{cases} 
\end{align*}
\end{example}

\section{Isosceles colorings and squandles}\label{sec:Isosceles}

In concrete situations, one sometimes has to deal with pairs of graphs for which the $\Q$-coloring counting invariants  from Corollary~\ref{L:QualgebraCountInvar} coincide for certain qualgebras~$\Q$, but which can be distinguished if only a particular kind of colorings is taken into account. After a short survey of the development of such ``special coloring'' ideas in the literature, we introduce a particular kind of qualgebra colorings, allowing one to distinguish, for instance, the two theta-curves from Figure~\ref{pic:theta}. 

\subsection*{Special colorings}

Start with group coloring rules for arbitrary oriented graphs (Figures~\ref{pic:Colorings}\rcircled{A} and~\ref{pic:QuandlesForGraphs}\rcircled{B}). The most natural particular kind of corresponding colorings is the one where the colors of arcs adjacent to the same vertex coincide, up to orientations. This means using the coloring rule from Figure~\ref{pic:SpecialCol}\rcircled{A}, where color~$a$ should be chosen for arcs oriented from the vertex, and color~$a^{-1}$ for the remaining ones. Such colorings can be traced back to C.Livingston's 1995 study of vertex constant graph groups (\cite{Livingston}). These ideas were generalized in 2007 by T.Fleming and B.Mellor (\cite{VirtualGraphs}) to the case of \textit{symmetric quandle}. The latter is a quandle~$\Q$ endowed with a \textit{good involution}, i.e., a map $\rho:\Q \rightarrow \Q$ satisfying, for all elements of~$\Q$, 
\begin{align}
& \rho(\rho(a)) = a,\label{E:SymQu1}\\
& \rho(a) \lhd b = \rho(a \lhd b),\label{E:SymQu2}\\
& a \lhd \rho(b) = a \wlhd b.\label{E:SymQu3}
\end{align}
Symmetric quandles were defined by S.Kamada in \cite{KamadaSymm}. The basic example is our favourite conjugation quandle, with $\rho(a)= a^{-1}$. Now, for a symmetric quandle~$\Q$, Fleming-Mellor's coloring rule for graphs is presented on Figure~\ref{pic:SpecialCol}\rcircled{B}; notations $a^{+1}=a$, $a^{-1}=\rho(a)$ are used here, and the choice in~$\pm 1$ is controlled by the same rule as for group colorings. This rule generalizes that from Figure~\ref{pic:SpecialCol}\rcircled{A}, and corresponding colorings can be seen as special among the quandle colorings in the sense of~\ref{pic:QuandlesForGraphs}\rcircled{C}. To see that one gets topological coloring rules, it suffices to check that a special coloring remains such after an R-move and the corresponding coloring change, which is done by an easy direct verification (cf. the proof of Proposition~\ref{P:SpecialQualgebraColor}). M.Niebrzydowski further generalized these ideas to an arbitrary quandle case (see \cite{NiebrzydowskiGraphs}).

\begin{center}
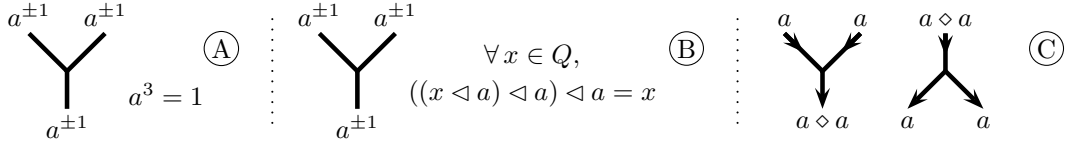

\begin{pspicture}(37,15)
\psline[linewidth=0.6](0,10)(5,5)(5,0)
\psline[linewidth=0.6](10,10)(5,5)
\rput[b](0,11){$a^{\pm 1}$}
\rput[b](10,11){$a^{\pm 1}$}
\rput[t](5,-1){$a^{\pm 1}$}
\rput(18,2){$a^3 = 1$}
\rput(25,8){\rcircled{A}}
\psline[linestyle=dotted](32,-2)(32,12)
\end{pspicture}
\begin{pspicture}(60,15)
\psline[linewidth=0.6](0,10)(5,5)(5,0)
\psline[linewidth=0.6](10,10)(5,5)
\rput[b](0,11){$a^{\pm 1}$}
\rput[b](10,11){$a^{\pm 1}$}
\rput[t](5,-1){$a^{\pm 1}$}
\rput(28,7){$\forall \, x \in \Q,$}
\rput(28,2){$((x \lhd a) \lhd a) \lhd a =x$}
\rput(48,8){\rcircled{B}}
\psline[linestyle=dotted](55,-2)(55,12)
\end{pspicture}
\begin{pspicture}(15,15)
\psline[linewidth=0.6,arrowsize=2]{->}(0,10)(2.5,7.5)
\psline[linewidth=0.6,arrowsize=2]{->}(0,10)(5,5)(5,0)
\psline[linewidth=0.6,arrowsize=2]{->}(10,10)(7.5,7.5)
\psline[linewidth=0.6](10,10)(5,5)
\rput[b](0,11){$a$}
\rput[b](10,11){$a$}
\rput[t](5,-1){$a \op a$}
\end{pspicture}
\begin{pspicture}(22,15)
\psline[linewidth=0.6,arrowsize=2]{->}(5,10)(5,7)
\psline[linewidth=0.6,arrowsize=2]{->}(5,10)(5,5)(0,0)
\psline[linewidth=0.6,arrowsize=2]{->}(5,5)(10,0)
\rput[t](0,-1){$a$}
\rput[t](10,-1){$a$}
\rput[b](5,11){$a \op a$}
\rput(18,8){\rcircled{C}}
\end{pspicture}
\medskip

\captionof{figure}{Examples of special coloring} \label{pic:SpecialCol}
\end{center}

\subsection*{Isosceles colorings}

We now return to qualgebra colorings for well-oriented graphs. The class of special colorings we propose to study here is the following:

\begin{definition}\label{D:IsoscelesColor}
Take a qualgebra~$(\Q,\lhd,\op)$ and a $\Q$-colored well-oriented graph diagram~$(\D,\C)$. A $3$-valent vertex of~$\D$ is called \textit{$\C$-isosceles} if~$\C$ assigns the same colors to its two adjacent co-oriented arcs. The coloring~$\C$ itself is called \emph{isosceles} if all vertices of~$\D$ are $\C$-isosceles.
\end{definition}

In other words, working with isosceles colorings means considering coloring rule~\ref{pic:SpecialCol}\rcircled{C}.

\begin{proposition}\label{P:SpecialQualgebraColor}
Given a qualgebra~$(\Q,\lhd,\op)$, the coloring rules from Figures~\ref{pic:Colorings}\rcircled{A} and~\ref{pic:SpecialCol}\rcircled{C} are topological.
\end{proposition}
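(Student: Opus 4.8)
The plan is to piggy-back on Proposition~\ref{P:QualgebraColor}. That result already provides, for each single R-move, a bijection $\Phi$ between the \emph{unrestricted} qualgebra colorings of the two sub-diagrams which leaves every color outside the small ball unchanged. Since an isosceles coloring is exactly an unrestricted coloring satisfying one extra equation at each vertex, I would prove the statement by showing that $\Phi$ carries isosceles colorings to isosceles colorings in both directions, i.e.\ that $\C$ is isosceles if and only if $\Phi(\C)$ is. Granting this, existence and uniqueness of an isosceles $\C'$ extending a given isosceles $\C$ are immediate: $\C':=\Phi(\C)$ is isosceles and agrees with $\C$ outside the ball, and any isosceles coloring agreeing with $\C$ outside the ball is a fortiori an unrestricted one, hence coincides with $\Phi(\C)$ by the uniqueness clause already supplied by Proposition~\ref{P:QualgebraColor}.

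Next I would localize the problem. Isoscelesness is imposed vertex by vertex, and $\Phi$ changes colors only inside the ball, so every vertex outside the ball keeps both of its co-oriented adjacent colors and stays (non-)isosceles. Hence only the vertices \emph{inside} the ball matter. For R$\mathrm{I}$--R$\mathrm{III}$ there are none, so these moves are handled for free. For each of R$\mathrm{IV}$--R$\mathrm{VI}$ in the orientations of Lemma~\ref{L:RMovesForWellOrGraphs} there is exactly one zip/unzip vertex in the ball, and I would compare its isoscelesness across the move.

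I expect three cases of increasing interest. For R$\mathrm{IV}$ the in-ball vertex fuses (resp.\ splits) the same pair of colors $b,c$ on both sides --- this can be read directly off Figure~\ref{pic:Qualgebras}, where $b\op c$ labels an arc on either side --- so its isoscelesness is the single equation $b=c$ regardless of the side. For R$\mathrm{VI}$ (distributivity~\eqref{E:QA2}) the vertex fuses $a,b$ on one side and $a\lhd c,\,b\lhd c$ on the other; since $x\mapsto x\lhd c$ is a bijection by invertibility~\eqref{E:Inv}, the conditions $a=b$ and $a\lhd c=b\lhd c$ coincide. For R$\mathrm{V}$ (semi-commutativity~\eqref{E:QAComm}) the vertex fuses $a,b$ on the uncrossed side and $b,\,a\lhd b$ on the crossed side; here $a=b$ gives $a\lhd b=b\lhd b=b$ by idempotence~\eqref{E:Idem}, while $a\lhd b=b$ gives $a=(a\lhd b)\wlhd b=b\wlhd b=b$ on applying~\eqref{E:Inv} (using $b\wlhd b=b$), so the two isoscelesness conditions are again equivalent. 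The unzip variants of R$\mathrm{V}$ and R$\mathrm{VI}$ should run identically once the relevant equalities are read on the two \emph{outgoing} arcs instead of the incoming ones.

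The only genuine computation is this last vertex bookkeeping for R$\mathrm{V}$ and R$\mathrm{VI}$, and the main (mild) obstacle is purely diagrammatic: correctly identifying, from the well-oriented pictures, which ordered pair of colors is fused or split at the in-ball vertex on each side. Once that is pinned down, the equivalence of the two isoscelesness conditions reduces --- as above --- to the quandle facts that translation by a fixed element is invertible and that elements are idempotent.
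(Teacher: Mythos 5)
Your proposal is correct and follows essentially the same route as the paper: both reduce the statement to Proposition~\ref{P:QualgebraColor} and then verify, move by move, that the induced coloring bijection preserves the isosceles condition in both directions (the paper details R$\mathrm{VI}^u$ via the bijectivity of $x \mapsto x \wlhd c$, exactly as you do for R$\mathrm{VI}$). The only difference is one of bookkeeping: the paper dismisses R$\mathrm{V}$ as obvious since the colors around an isosceles vertex are unchanged, whereas you spell out the two-directional equivalence $a=b \Leftrightarrow a\lhd b = b$ using idempotence and invertibility --- a slightly more careful rendering of the same observation.
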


\begin{proof}
Since isosceles colorings are particular instances of those from Proposition~\ref{P:QualgebraColor}, which are controlled by topological rules, it suffices to check that an isosceles coloring remains such after an R-move and the corresponding coloring change. For moves R$\mathrm{I}$-R$\mathrm{III}$ and R$\mathrm{V}$ it is obvious, since they do not change the colors around isosceles trivalent vertices. Move R$\mathrm{VI}^u$ is treated on Figure~\ref{pic:RVIu}: the top three colors determine all the remaining ones (note that the bottom colors coincide due to~\eqref{E:QA2'}), and for any of the two diagrams being isosceles means satisfying $a=b$ (since the map $x \mapsto x \wlhd c$ is a bijection on~$\Q$). Moves  R$\mathrm{VI}^z$ and R$\mathrm{IV}$ are treated similarly.

\begin{center}
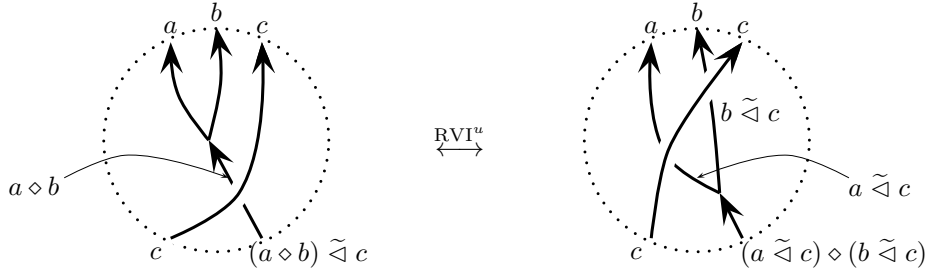

\psset{unit=2mm}
\begin{pspicture}(-15,-3.5)(23,13.5)
\pscircle[linewidth=0.2,linestyle=dotted](0,5){7.5}
\pscurve[linewidth=0.2,arrowsize=1.5]{->}(-0.5,5)(0.3,9)(0,12.5)
\pscurve[linewidth=0.2,arrowsize=1.5]{->}(-0.5,5)(-2.5,8)(-3,11.5)
\psline[linewidth=0.2,arrowsize=1.5]{->}(3,-1.5)(-0.5,5)
\pscurve[linewidth=0.2,border=0.6,arrowsize=1.5]{->}(-3,-1.5)(1.5,1.5)(3,11.5)
\rput(-3,12.5){$a$}
\rput(0,13.5){$b$}
\rput(3,12.5){$c$}
\rput(-4,-2.5){$c$}
\rput(-12,2){$a \op b$}
\pscurve[linewidth=0.05]{->}(-10,2.5)(-6,4)(0.8,2.5)
\rput(6,-2.5){$(a \op b) \wlhd c$}
\rput(16,5){$\overset{\text{R}\mathrm{VI}^u}{\longleftrightarrow}$}
\end{pspicture}
\begin{pspicture}(-8,-3.5)(18,13.5)
\pscircle[linewidth=0.2,linestyle=dotted](0,5){7.5}
\pscurve[linewidth=0.2,arrowsize=1.5]{->}(1.5,1.5)(1,7)(0,12.5)
\pscurve[linewidth=0.2,arrowsize=1.5]{->}(1.5,1.5)(-2,4)(-3,11.5)
\psline[linewidth=0.2,arrowsize=1.5]{->}(3,-1.5)(1.5,1.5)
\pscurve[linewidth=0.2,border=0.6,arrowsize=1.5]{->}(-3,-1.5)(-2,4)(-1,6)(3,11.5)
\rput(-3,12.5){$a$}
\rput(0,13.5){$b$}
\rput(3,12.5){$c$}
\rput(-4,-2.5){$c$}
\rput(3.5,7){$b \wlhd c$}
\rput(12,2){$a \wlhd c$}
\pscurve[linewidth=0.05]{->}(10,2.5)(6,4)(0,2.5)
\rput(9,-2.5){$(a \wlhd c) \op (b \wlhd c)$}
\end{pspicture}
\psset{unit=1mm}
\captionof{figure}{Reidemeister move~$\mathrm{VI}^u$ and induced colorings}
\label{pic:RVIu}
\end{center}
\end{proof}

\begin{corollary}\label{L:IsoQualgebraCountInvar}
Take a qualgebra $(\Q,\lhd,\op)$. An invariant of well-oriented $3$-valent knotted graphs can be constructed by assigning to such a graph the number of isosceles $\Q$-colorings $\#\ColIs_{\Q}(\D)$ of any of its diagrams~$\D$.
\end{corollary}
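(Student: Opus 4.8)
The plan is to recognize this corollary as the isosceles counterpart of Corollary~\ref{L:QualgebraCountInvar}, obtained by feeding the isosceles coloring rules into the same general counting-invariant machinery developed in Section~\ref{sec:Colorings}. Since the essential verification has already been carried out in Proposition~\ref{P:SpecialQualgebraColor}, the proof reduces to assembling existing results in the correct order.

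First I would invoke Proposition~\ref{P:SpecialQualgebraColor}, which guarantees that the coloring rules from Figures~\ref{pic:Colorings}\rcircled{A} and~\ref{pic:SpecialCol}\rcircled{C} --- that is, precisely the rules defining isosceles $\Q$-colorings --- are topological in the sense of Definition~\ref{D:TopColRules}. The conceptual point to state explicitly here is that $\ColIs_\Q$ is nothing but $\Col_\SS$ for the coloring set $\SS = \Q$ equipped with these particular, more restrictive, rules at trivalent vertices. Lemma~\ref{L:CountInvar} was proved for an arbitrary set of topological coloring rules and does not require the rules to be the most general qualgebra rules, so the restriction to isosceles vertices causes no difficulty.

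Next I would apply Lemma~\ref{L:CountInvar} directly: because the isosceles coloring rules are topological, the function $\D \mapsto \#\ColIs_\Q(\D)$ is well-defined on R-equivalence classes of well-oriented graph diagrams. Finally, invoking the Kauffman--Yamada--Yetter theorem \cite{KauffmanGraphs,YamadaGraphs,YetterGraphs}, R-equivalence classes of such diagrams correspond precisely to isotopy classes of the represented well-oriented $3$-valent knotted graphs, whence $\#\ColIs_\Q(\D)$ depends only on the graph and not on the chosen diagram.

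I do not anticipate any genuine obstacle: the argument is a near-verbatim transcription of the proof of Corollary~\ref{L:QualgebraCountInvar}, with $\Col_\Q$ replaced by $\ColIs_\Q$ and Proposition~\ref{P:QualgebraColor} replaced by Proposition~\ref{P:SpecialQualgebraColor}. The only subtlety worth a sentence is confirming that isosceles colorings genuinely fit the abstract coloring framework --- i.e.\ that imposing the local rule of Figure~\ref{pic:SpecialCol}\rcircled{C} at every vertex yields a bona fide $\SS$-coloring scheme --- which is immediate from the definition, since the isosceles condition is itself a local constraint prescribed per special point.
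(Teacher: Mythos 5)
Your proof is correct and follows exactly the route the paper intends: Proposition~\ref{P:SpecialQualgebraColor} establishes that the isosceles coloring rules are topological, Lemma~\ref{L:CountInvar} then makes $\D \mapsto \#\ColIs_{\Q}(\D)$ well-defined on R-equivalence classes, and the Kauffman--Yamada--Yetter correspondence identifies those classes with isotopy classes of graphs --- this is a verbatim adaptation of the paper's proof of Corollary~\ref{L:QualgebraCountInvar}, which is precisely why the paper states this corollary without a separate proof. Your added remark that the isosceles condition is a bona fide local coloring rule fitting the framework of Section~\ref{sec:Colorings} is a sensible (if brief) point the paper leaves implicit.
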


\begin{example}\label{EX:theta}

The Kinoshita-Terasaka $\Theta$-curve~$\Theta_{KT}$ and the standard $\Theta$-curve~$\Theta_{st}$ (Figure~\ref{pic:theta}) often serve as a litmus test for new graph invariants. One of the reasons is the following: when any edge is removed from~$\Theta_{KT}$, the remaining two ones form the unknot, just like for~$\Theta_{st}$; however, the three edges of~$\Theta_{KT}$ are knotted, in the sense that~$\Theta_{KT}$ is not isotopic to~$\Theta_{st}$. These ``partial unknottedness'' phenomena are of the same nature as those exhibited by the Borromean rings. 

Now, for these two $\Theta$-curves, consider the isosceles $\Q$-colorings of their diagrams~$\D_{KT}$ and~$D_{st}$, depicted on Figure~\ref{pic:theta}. Diagram~$\D_{st}$ (as well as all the other well-oriented versions of the underlying unoriented diagram) has $\#\Q$ isosceles $\Q$-colorings: the co-oriented arcs can be colored by any color~$x$, and the remaining arc gets the color $x \op x$. As for~$\D_{KT}$, the coloring rule~\ref{pic:SpecialCol}\rcircled{C} around $3$-valent vertices is taken into consideration in Figure~\ref{pic:theta}, and the rule~\ref{pic:Colorings}\rcircled{A}  around crossing points gives relations
$$ \left\lbrace \begin{array}{r c l}
 a &=& x \lhd  (y \op y)   =  y \lhd x, \\
 b &=& x \wlhd y  =  y \wlhd (x \op x), \\
 c &=& (y \op y) \lhd x   =  (x \op x) \wlhd y.
 \end{array}\right.$$
Thus, $\#\ColIs_{\Q}(\D_{KT})$ is the number of the solutions of the above system in~$x$ and~$y$. One easily checks that $x=y=q$ is a solution for any~$q \in \Q$ (cf.  Lemma~\ref{L:LocTrivial}). In order to find other isosceles colorings of~$\D_{KT}$, let us try the simplest case of a group qualgebra~$\Q$ and of its order $3$ elements~$x$ and~$y$. The three relations above are now equivalent to a single one, namely
$$ xyx = yxy.$$
In the symmetric group~$S_4$ for example, distinct order $3$ elements $x = (123)$ and $y= (432)$ give a solution to the above equation. One thus obtains
$$\#\ColIs_{S_4}(\D_{KT}) > \# S_4 = \#\ColIs_{S_4}(\D_{st}).$$
Since, as mentioned above, $\#\ColIs_{S_4}(\D_{st})$ is the same for all well-oriented versions of~$\D_{st}$, one concludes that~$\Theta_{KT}$ and~$\Theta_{st}$ are distinct as unoriented graphs.


\begin{center}
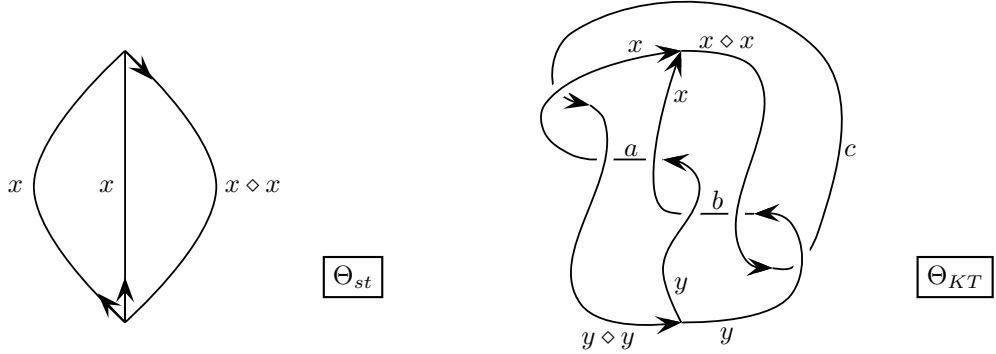

\psset{unit=1.2mm}
\begin{pspicture}(-15,-4)(45,35)
\psline[linewidth=0.2,arrowsize=2]{->}(0,0)(0,5)
\psline[linewidth=0.2](0,0)(0,30)
\pscurve[linewidth=0.2](0,0)(-10,15)(0,30)
\pscurve[linewidth=0.2](0,30)(10,15)(0,0)
\psline[linewidth=0.2,arrowsize=2]{->}(0,0)(-3,3.2)
\psline[linewidth=0.2,arrowsize=2]{->}(0,30)(3,26.8)
\rput(-12,15){$x$}
\rput(-2,15){$x$}
\rput(14,15){$x \op x$}
\rput(25,5){\psframebox{$\Theta_{st}$}}
\end{pspicture}
\begin{pspicture}(-15,-4)(35,35)
\psline[linewidth=0.2](-10,18)(-2,18)
\psline[linewidth=0.2](8,12)(0,12)
\pscurve[linewidth=0.2,arrowsize=2]{->}(10,6)(12,6)(15,10)(15,30)(-12,32)(-14,26)(-13,25)(-10,24)
\pscurve[linewidth=0.2,border=0.8,arrowsize=2]{->}(-10,18)(-12,18.5)(-15,21)(-15,24)(0,30)
\pscurve[linewidth=0.2,border=0.8,arrowsize=2]{->}(0,12)(-2,12.5)(-3,15)(0,30)
\pscurve[linewidth=0.2,border=0.8,arrowsize=2]{->}(0,30)(8,28)(7,7)(10,6)
\pscurve[linewidth=0.2,border=0.8,arrowsize=2]{->}(0,0)(12,3)(12,11)(8,12)
\pscurve[linewidth=0.2,border=0.8,arrowsize=2]{->}(0,0)(-2,6)(2,15)(1,17)(-2,18)
\pscurve[linewidth=0.2,border=0.8,arrowsize=2]{->}(-10,24)(-8.5,22.5)(-11,2)(0,0)
\rput(-5,30.5){$x$}
\rput(0,25){$x$}
\rput(5,31){$x \op x$}
\rput(-8,-2){$y \op y$}
\rput(5,-1.5){$y$}
\rput(0,4){$y$}
\rput(-5.5,19){$a$}
\rput(4,13.5){$b$}
\rput(18.5,19){$c$}
\rput(30,5){\psframebox{$\Theta_{KT}$}}
\end{pspicture}
\psset{unit=1mm}
\captionof{figure}{Isosceles colorings for diagrams of standard and Kinoshita-Terasaka $\Theta$-curves}
\label{pic:theta}
\end{center}

\end{example}

\subsection*{A variation of qualgebra ideas}

Restricting our attention to isosceles colorings only, we do not exploit the whole structure of qualgebra. Indeed, the only values of $a \op b$ we need are those for $a = b$. In other words, we use only the ``squaring'' part $\sq:a \mapsto a \op a$ of the operation~$\op$. Pursuing this remark, let us try to determine for which unary operations~$\sq$ the coloring rule~\ref{pic:Colorings}\rcircled{C} is topological. 

One arrives to the following notion:

\begin{definition}\label{D:Squandle}
A set~$\Q$ endowed with a binary operation~$\lhd$ and a unary operation~$\sq$ (which we often denote by $a \mapsto a^2$) is called a \emph{squandle} if it satisfies Axioms \eqref{E:SD}-\eqref{E:Idem} and \eqref{E:SQA1}-\eqref{E:SQA2} (see page~\pageref{E:SD}).
\end{definition}

The term ``squandle'' (similarly to the term ``qualgebra'') comes from terms ``square'' and ``quandle''  zipped together, cf. Figure~\ref{pic:Term}.

Let us also note the compatibility relations between operations~$\sq$ and~$\wlhd$:

\begin{lemma}\label{L:SQualgebraProperties}
A squandle $(\Q,\lhd,\sq)$ enjoys the following properties:
\begin{align}
&a\wlhd b^2 = (a\wlhd b)\wlhd b,\label{E:SQA1'}\\
&a^2 \wlhd b = (a\wlhd b)^2.\label{E:SQA2'}
\end{align}
\end{lemma}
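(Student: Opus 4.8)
The plan is to mirror the proof of Lemma~\ref{L:QualgebraProperties} verbatim, adapting it to the squaring operation. The identities \eqref{E:SQA1'} and \eqref{E:SQA2'} are precisely the $\wlhd$-analogues of the defining axioms \eqref{E:SQA1} and \eqref{E:SQA2}, and the key tool is that, by \eqref{E:Inv}, for every fixed $b \in \Q$ the maps $x \mapsto x \lhd b$ and $x \mapsto x \wlhd b$ are mutually inverse bijections of~$\Q$. Consequently, to establish an identity it suffices to apply to both of its sides a well-chosen composition of such maps and verify equality after all the cancellations have been carried out. In each case the recipe is the same: feed the already-known axiom a cleverly substituted element, collapse one side to something simple using \eqref{E:Inv}, and then strip off the spurious $\lhd b$'s by applying $\wlhd b$ the appropriate number of times.

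For \eqref{E:SQA1'}, I would apply \eqref{E:SQA1} to the element $a\wlhd b^2$ (keeping the same~$b$), obtaining $(a\wlhd b^2)\lhd b^2 = ((a\wlhd b^2)\lhd b)\lhd b$. The left-hand side equals~$a$ by \eqref{E:Inv}. Applying the map $x \mapsto (x\wlhd b)\wlhd b$ to both sides and using \eqref{E:Inv} twice on the right collapses the right-hand side to $a\wlhd b^2$, while the left-hand side becomes $(a\wlhd b)\wlhd b$; this is exactly \eqref{E:SQA1'}.

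For \eqref{E:SQA2'}, I would apply \eqref{E:SQA2} to the element $a\wlhd b$, which gives $(a\wlhd b)^2\lhd b = ((a\wlhd b)\lhd b)^2$. Here the right-hand side simplifies to $a^2$, since $(a\wlhd b)\lhd b = a$ by \eqref{E:Inv}. Applying $x \mapsto x\wlhd b$ to both sides and cancelling once via \eqref{E:Inv} on the left yields $(a\wlhd b)^2 = a^2\wlhd b$, which is \eqref{E:SQA2'}.

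There is no genuine obstacle here: both derivations are purely formal manipulations built on the invertibility axiom. The only point demanding care is the bookkeeping of the cancellations, namely ensuring that the composition of maps applied to each side is exactly the inverse of the operations one wants to remove and that \eqref{E:Inv} is invoked in the correct nesting order. Alternatively, and perhaps more transparently, one may view each argument simply as substituting $a \mapsto a\wlhd b^2$ (respectively $a \mapsto a\wlhd b$) into the original axiom and then rewriting with \eqref{E:Inv}.
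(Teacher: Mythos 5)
Your proof is correct and follows exactly the approach the paper intends: the lemma is stated without proof precisely because it is the squandle analogue of Lemma~\ref{L:QualgebraProperties}, and your derivations (substituting $a\wlhd b^2$ into~\eqref{E:SQA1} and $a\wlhd b$ into~\eqref{E:SQA2}, collapsing one side via~\eqref{E:Inv}, then stripping off the remaining $\lhd b$'s with inverse translations) mirror the paper's proof of that qualgebra lemma verbatim. Both cancellation arguments check out, so there is nothing to fix.
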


\begin{example}\label{EX:QualgebraAsSquandle}
A qualgebra $(\Q,\lhd,\op)$ always gives rise to a squandle $(\Q,\lhd,\sq:a \mapsto a \op a)$. Moreover, the sub-squandles of the latter (which are not necessarily sub-qualgebras) can be of interest. In particular, conjugation and squaring operation $a \mapsto a^2$ in a group form a squandle, called a \emph{group squandle}. Axioms \eqref{E:SQA1}-\eqref{E:SQA2} can now be seen as an abstraction of the relations between conjugation and squaring operations in a group.
\end{example}

Now, considering squandle colorings, one gets the following results, with the statements and proofs analogous to the qualgebra case:

\begin{proposition}\label{P:SquandleColor}
Take a set~$\Q$ endowed with a binary operation~$\lhd$ and a unary operation~$\sq$. Coloring rules from Figure~\ref{pic:Colorings}\rcircled{A}\&\rcircled{C} are topological if and only if $(\Q,\lhd,\sq)$ is a squandle.
\end{proposition}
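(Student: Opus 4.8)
The plan is to mimic exactly the structure of the proof of Proposition~\ref{P:QualgebraColor}, since the statement asserts the analogous equivalence for the squaring operation in place of~$\op$. The coloring rule around crossings is unchanged from Figure~\ref{pic:Colorings}\rcircled{A}, so the equivalence between compatibility with moves R$\mathrm{I}$--R$\mathrm{III}$ and the quandle axioms \eqref{E:SD}--\eqref{E:Idem} is already established in Example~\ref{EX:QuandleCol}. Thus the only new content concerns the three graph moves R$\mathrm{IV}$--R$\mathrm{VI}$ (in the well-oriented versions fixed in Lemma~\ref{L:RMovesForWellOrGraphs}), and one checks that each of these is equivalent to one of the squandle axioms \eqref{E:SQA1}--\eqref{E:SQA2}.

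First I would treat move R$\mathrm{IV}^z$: reading the coloring rule \ref{pic:Colorings}\rcircled{C} around the zip vertex, the top colors $a,b$ of the two co-oriented strands feed into a vertex producing color~$b^2$ (when $a=b$) on the zipped arc --- but here we are in the general colored setting, so I must be careful that the squaring rule only constrains the vertex, while the two sides of R$\mathrm{IV}^z$ differ in whether a strand passes the vertex before or after a crossing. Propagating colors down both diagrams, the induced bottom colors coincide precisely when $a \lhd b^2 = (a\lhd b)\lhd b$, i.e.\ Axiom~\eqref{E:SQA1}. The same analysis for R$\mathrm{IV}^u$ yields Axiom~\eqref{E:SQA1'}, which by Lemma~\ref{L:SQualgebraProperties} is equivalent to \eqref{E:SQA1} in the presence of \eqref{E:Inv}. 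For R$\mathrm{VI}^z$ and R$\mathrm{VI}^u$, the distributivity of conjugation over squaring is what is at stake, and comparing the two induced colorings gives Axiom~\eqref{E:SQA2} (respectively \eqref{E:SQA2'}, again equivalent via Lemma~\ref{L:SQualgebraProperties}).

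The remaining move to address is R$\mathrm{V}$, which in the qualgebra case encoded semi-commutativity \eqref{E:QAComm}. Here I expect this to be the crucial simplification: move R$\mathrm{V}$ slides a strand past a vertex, and with the squaring rule the two co-oriented arcs entering the vertex carry equal colors by construction, so the semi-commutativity constraint degenerates. I anticipate that R$\mathrm{V}$ imposes no new condition beyond the quandle axioms --- this is precisely why the squandle needs only the two extra axioms \eqref{E:SQA1}--\eqref{E:SQA2} rather than three. Verifying that R$\mathrm{V}$ is automatic (given \eqref{E:Idem} and \eqref{E:Inv}) is the one place where the squandle argument genuinely diverges from the qualgebra one, and so it is the main point I would check with care rather than by rote analogy. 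Because the paper explicitly signals that the statement and proof are ``analogous to the qualgebra case,'' I would present the argument compactly, referring back to Proposition~\ref{P:QualgebraColor} for the shared bookkeeping and spelling out only the correspondence of each graph move with the relevant squandle axiom, together with the observation that R$\mathrm{V}$ contributes no constraint.
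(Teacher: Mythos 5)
Your proposal is correct and is essentially the paper's own argument: the paper gives no separate proof here, deferring to the analogy with Proposition~\ref{P:QualgebraColor}, and your execution of that analogy (moves R$\mathrm{I}$-R$\mathrm{III}$ giving the quandle axioms, R$\mathrm{IV}^z$ giving \eqref{E:SQA1} with R$\mathrm{IV}^u$ reduced to it via \eqref{E:Inv}, and R$\mathrm{VI}$ giving \eqref{E:SQA2}) is exactly the intended one. You also correctly isolate the one non-mechanical point: R$\mathrm{V}$ is vacuous because on one side the vertex constraint reads $a\lhd b = b$, which by injectivity of $x\mapsto x\lhd b$ (Axiom~\eqref{E:Inv}) together with $b\lhd b=b$ (Axiom~\eqref{E:Idem}) forces $a=b$, the constraint on the other side --- matching how the paper disposes of R$\mathrm{V}$ for isosceles colorings in the proof of Proposition~\ref{P:SpecialQualgebraColor}.
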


\begin{corollary}\label{L:SquandleCountInvar}
Take a squandle $(\Q,\lhd,\sq)$ and consider $\Q$-coloring rules~\ref{pic:Colorings}\rcircled{A}\&\rcircled{C}. The (possibly infinite) quantity $\#\Col_{\Q}(\D)$ does not depend on the choice of a diagram~$\D$ representing a well-oriented $3$-valent knotted graph~$\Gamma$.
\end{corollary}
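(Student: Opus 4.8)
The plan is to prove Corollary~\ref{L:SquandleCountInvar} by the exact same two-step mechanism already used for the qualgebra case in Corollary~\ref{L:QualgebraCountInvar}, invoking the general counting-invariant machinery of Lemma~\ref{L:CountInvar}. First I would note that Proposition~\ref{P:SquandleColor} guarantees that the coloring rules from Figure~\ref{pic:Colorings}\rcircled{A}\&\rcircled{C} are topological precisely because $(\Q,\lhd,\sq)$ is a squandle; this is the content that does the real work and it is assumed available. Once topologicality is in hand, Lemma~\ref{L:CountInvar} immediately yields that for any R-equivalent diagrams $\D$ and $\D'$ there is a bijection $\Col_{\Q}(\D) \overset{bij}{\longleftrightarrow} \Col_{\Q}(\D')$, so the function $\D \mapsto \#\Col_{\Q}(\D)$ (possibly taking the value $\infty$) descends to R-equivalence classes.

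Second, I would translate R-equivalence into graph isotopy: by the results of Kauffman, Yamada and Yetter \cite{KauffmanGraphs,YamadaGraphs,YetterGraphs}, the R-moves (Reidemeister moves for $3$-valent graph diagrams) generate exactly the isotopy relation on knotted $3$-valent graphs, so R-equivalence classes of diagrams correspond bijectively to isotopy classes of the represented graphs. For well-oriented graphs one may further appeal to Lemma~\ref{L:RMovesForWellOrGraphs}, which reduces the well-oriented R-moves to be checked to a minimal generating set. Combining these, $\#\Col_{\Q}(\D)$ depends only on the isotopy class of the well-oriented graph~$\Gamma$ underlying~$\D$, not on the chosen diagram.

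There is essentially no obstacle here: the statement is a formal corollary, structurally identical to Corollary~\ref{L:QualgebraCountInvar}, with the squandle coloring rule~\rcircled{C} playing the role formerly played by the qualgebra rule~\rcircled{B}. The only genuine content has been pushed into Proposition~\ref{P:SquandleColor}, whose proof the text explicitly states is analogous to the qualgebra case (Proposition~\ref{P:QualgebraColor}); that analogous proof would check, move by move, that the induced colorings on the two sides of each generating R-move agree if and only if the squandle axioms \eqref{E:SQA1}--\eqref{E:SQA2} (and their $\wlhd$-variants \eqref{E:SQA1'}--\eqref{E:SQA2'} from Lemma~\ref{L:SQualgebraProperties}) hold. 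Given that proposition, the corollary is a one-line consequence, so my write-up would simply cite Proposition~\ref{P:SquandleColor}, then Lemma~\ref{L:CountInvar}, then \cite{KauffmanGraphs,YamadaGraphs,YetterGraphs}, mirroring the earlier proof verbatim.
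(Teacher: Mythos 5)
Your proof is correct and is essentially identical to the paper's: the paper proves this corollary (via the explicitly analogous Corollary~\ref{L:QualgebraCountInvar}) by citing Proposition~\ref{P:SquandleColor} for topologicality of the rules, Lemma~\ref{L:CountInvar} for invariance under R-equivalence, and \cite{KauffmanGraphs,YamadaGraphs,YetterGraphs} to identify R-equivalence classes with isotopy classes of graphs. Your extra appeal to Lemma~\ref{L:RMovesForWellOrGraphs} is harmless but unnecessary at this level, since it is already absorbed into the proof of Proposition~\ref{P:SquandleColor}.
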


\begin{example}\label{EX:thetaSquandle}
Let us resume Example~\ref{EX:theta}. In the symmetric group~$S_4$, consider the subset~$S^3_4$ of cycles of length~$3$. It contains $8$ elements, and it is closed under conjugation and squaring. Hence~$S^3_4$, endowed with conjugation and squaring operations, is a size~$8$ squandle (but not a qualgebra, since it does not contain $\Id = (123)^3$). Calculations from Example~\ref{EX:theta} show that $\#\Col_{S^3_4}(\D_{st}) = \# S^3_4 = 8,$ and that $\#\Col_{S^3_4}(\D_{KT})$ is the number of solutions of $xyx=yxy$ in~$S^3_4$. Now, for any~$x$, the pair $(x,x)$ is a solution, while $(x,x^{-1})$ is not. Further, we have seen that cycles $(123)$ and $(432)$ form a solution, and one checks that  $(123)$ and $(423)$ do not. A conjugation argument allows to conclude that for a fixed~$x_0$, precisely a half of the pairs $(x_0,y)$ are solutions, which totals to $\#\Col_{S^3_4}(\D_{KT}) = 8\cdot 4 = 32$. Thus, although this example gives nothing new about the graphs~$\Theta_{KT}$ and~$\Theta_{st}$ (the group qualgebra of~$S_4$ was sufficient to distinguish them), it does show that with squandle colorings, actual computation of counting invariants can be much easier.
\end{example}

\section{Qualgebras and squandles with $4$ elements}\label{sec:Qualgebras4}

In this section we completely describe qualgebras and squandles with $4$ elements. Compared to quandles, these new structures come with abundant examples even in such a small size.

\subsection*{General properties}

Some general facts about qualgebras and squandles are necessary before proceeding to classification questions.

\begin{notation}\label{D:Translations}
Given a quandle $(\Q,\lhd)$ (in particular, a qualgebra or squandle) and an $a \in \Q$, denote by~$\S_a$ the \textit{right translation} map $x \mapsto x \lhd a$. We write quandle maps on the right of their arguments, e.g., $(x)\S_a =  x \lhd a$.
\end{notation} 

Most axioms of quandle-like structures can be expressed in terms of these right translations, allowing one to work with symmetric groups instead of abstract structures. This approach was extensively used for quandles in \cite{LopesRoseman}. Here we apply similar ideas to qualgebras and squandles.

\begin{lemma}\label{L:Sa}
Given a qualgebra $(\Q,\lhd,\op)$ or a squandle $(\Q,\lhd,\sq)$, the map
\begin{align}
\S: \Q &\longrightarrow \Aut(\Q),\label{E:MapS}\\
a &\longmapsto \S_a \notag
\end{align}
is a well-defined qualgebra/squandle morphism from~$\Q$ to $\Aut(\Q)$, the latter being the group qualgebra/squandle of qualgebra/squandle automorphisms of~$\Q$. 
\end{lemma}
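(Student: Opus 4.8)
The plan is to verify the statement in two stages: first that each right translation~$\S_a$ is itself a qualgebra/squandle automorphism of~$\Q$ (so that $\S$ genuinely lands in $\Aut(\Q)$), and then that the assignment $a \mapsto \S_a$ is compatible with both operations, i.e.\ is a morphism into the group qualgebra/squandle structure on $\Aut(\Q)$. Following Examples~\ref{EX:GroupQualgebra} and~\ref{EX:QualgebraAsSquandle}, the operations on $\Aut(\Q)$ are conjugation $f \lhd g = g^{-1} f g$ together with composition $f \op g = fg$, respectively squaring $f^2 = ff$; throughout I use the right-action convention of Notation~\ref{D:Translations}, namely $(x)(fg) = ((x)f)g$.

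First I would check that $\S_a$ is a bijection, with inverse the map $\widetilde{\S}_a : x \mapsto x \wlhd a$: indeed $(x \lhd a)\wlhd a = (x \wlhd a)\lhd a = x$ by invertibility~\eqref{E:Inv}. That $\S_a$ preserves~$\lhd$ is exactly self-distributivity, $(x \lhd y)\lhd a = (x \lhd a)\lhd(y \lhd a)$, i.e.\ \eqref{E:SD}. That it preserves the second operation is read off directly from the remaining structural axiom: in the qualgebra case $(x \op y)\lhd a = (x\lhd a)\op(y\lhd a)$ is distributivity~\eqref{E:QA2}, and in the squandle case $x^2 \lhd a = (x \lhd a)^2$ is~\eqref{E:SQA2}. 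Hence $\S_a \in \Aut(\Q)$ and $\S$ is well defined as a map $\Q \to \Aut(\Q)$.

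The remaining work is to see that $\S$ respects the operations. Compatibility with~$\op$ (resp.\ squaring) is immediate from translation composability: $(x)\S_{a\op b} = x \lhd(a\op b) = (x\lhd a)\lhd b = (x)(\S_a \S_b)$ by~\eqref{E:QA1}, so $\S_{a\op b} = \S_a \S_b = \S_a \op \S_b$; in the squandle case $(x)\S_{a^2} = x\lhd a^2 = (x\lhd a)\lhd a = (x)\S_a^2$ by~\eqref{E:SQA1}, so $\S_{a^2} = \S_a^2$. The one computation requiring a little manipulation---and the step I expect to be the main (minor) obstacle---is showing that $\S$ respects~$\lhd$, i.e.\ that $\S_{a \lhd b} = \S_b^{-1}\S_a\S_b$. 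Unfolding the right-hand side gives $(x)(\S_b^{-1}\S_a\S_b) = ((x\wlhd b)\lhd a)\lhd b$; writing $y = x \wlhd b$ so that $x = y \lhd b$, the left-hand side $(x)\S_{a\lhd b}$ becomes $(y\lhd b)\lhd(a\lhd b)$, which equals $(y\lhd a)\lhd b$ by self-distributivity~\eqref{E:SD}. The two sides thus agree for all~$x$. Combining this with the previous identities completes the proof in both the qualgebra and squandle cases; I would remark that idempotence~\eqref{E:Idem} plays no role beyond guaranteeing we are in a quandle, and that the whole argument amounts to each structural axiom translating into exactly one of the required morphism identities.
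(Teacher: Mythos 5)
Your proposal is correct and follows essentially the same route as the paper's proof: invertibility of $\S_a$ from~\eqref{E:Inv}, preservation of~$\lhd$ and~$\op$ (resp.~$\sq$) from~\eqref{E:SD} and~\eqref{E:QA2} (resp.~\eqref{E:SQA2}), the morphism identity for~$\op$ from~\eqref{E:QA1}, and the conjugation identity $\S_{a\lhd b}=\S_b^{-1}\S_a\S_b$ via the same substitution $x=(x\wlhd b)\lhd b$ followed by self-distributivity. The only cosmetic differences are that the paper proves the qualgebra case and declares the squandle case analogous, whereas you spell out both, and your closing remark that~\eqref{E:Idem} is not needed in the computation is accurate.
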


\begin{proof}
We prove the qualgebra version of the assertion, the squandle one being analogous.

One should first show that any~$\S_a$ is a qualgebra automorphism. Indeed, it is invertible due to Axiom~\eqref{E:Inv}, its inverse~$\S_a^{-1}$ being the map $x \mapsto x \wlhd a$, and it respects operations~$\lhd$ and~$\op$ due to~\eqref{E:SD} and~\eqref{E:QA2} respectively.

It remains to prove that~$\S$ is a qualgebra morphism. Relation $\S_{a \op b} = \S(a) \S(b)$ directly follows from~\eqref{E:QA1}. Next, for any $x \in \Q$ one calculates (using quandle Axioms \eqref{E:SD}-\eqref{E:Idem})
$$(x)\S_{a \lhd b} = x \lhd (a \lhd b) = ((x\wlhd b) \lhd b) \lhd (a \lhd b) = ((x\wlhd b) \lhd a) \lhd b = (((x) \S_b^{-1})\S_a)\S_b = (x) (\S_a \lhd \S_b),$$
since in the group qualgebra $\Aut(\Q)$ operation~$\lhd$ is the conjugation. Hence, $\S_{a \lhd b} = \S(a) \lhd \S(b)$.  
\end{proof}

\begin{lemma}\label{L:Sa'}
For a finite qualgebra $\Q$, the image $\S(\Q)$ of the map~\eqref{E:MapS} is a subgroup of $\Aut(\Q)$.
\end{lemma}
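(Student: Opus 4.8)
The plan is to leverage Lemma~\ref{L:Sa}, which already tells us that $\S$ is a qualgebra morphism into the group qualgebra $\Aut(\Q)$. In particular, the relation $\S_{a \op b} = \S_a \S_b$ shows that the image $\S(\Q)$ is closed under the composition of automorphisms, i.e. under the group multiplication of $\Aut(\Q)$. Since $\Q$ is finite and nonempty, $\S(\Q)$ is thus a nonempty finite subset of the group $\Aut(\Q)$ that is stable under multiplication. It therefore suffices to prove the elementary fact that any such subset is automatically a subgroup.

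To establish this, I would fix an element $\S_a \in \S(\Q)$ and consider left multiplication $L \colon \S(\Q) \to \S(\Q)$, $g \mapsto \S_a g$. This map is well-defined by closure under multiplication and injective by cancellation in the group $\Aut(\Q)$; since $\S(\Q)$ is finite, $L$ is therefore a bijection. Surjectivity yields some $g_0 \in \S(\Q)$ with $\S_a g_0 = \S_a$, forcing $g_0 = \Id$, so the identity automorphism lies in $\S(\Q)$; applying surjectivity once more gives some $g_1 \in \S(\Q)$ with $\S_a g_1 = \Id$, so that $\S_a^{-1} = g_1 \in \S(\Q)$. As $\S_a$ was arbitrary, $\S(\Q)$ contains $\Id$ and is closed under both multiplication and inverses, hence is a subgroup.

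The point worth emphasizing, and where I expect the only genuine subtlety to lie, is why the statement is not completely trivial. Although each $\S_a$ is invertible in $\Aut(\Q)$---with inverse the automorphism $x \mapsto x \wlhd a$ supplied by Axiom~\eqref{E:Inv}---this inverse need not itself be a right translation $\S_b$ by any element $b \in \Q$. Thus closure of $\S(\Q)$ under inverses cannot be read off directly from the qualgebra structure or from the morphism property of~$\S$; it is precisely the finiteness hypothesis on~$\Q$ that supplies it, via the bijectivity of left multiplication above. This is the one step that essentially uses finiteness, and it is where the substance of the argument is concentrated.
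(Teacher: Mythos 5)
Your proposal is correct and takes essentially the same route as the paper: both use Lemma~\ref{L:Sa} to get that $\S(\Q)$ is a nonempty finite subset of $\Aut(\Q)$ closed under composition (via $\S_{a \op b} = \S_a \S_b$), and then invoke the elementary fact that such a subset of a group is automatically a subgroup. The only immaterial difference lies in the proof of that elementary fact: the paper argues via powers, writing $1 = a^p$ and $a^{-1} = a^{p-1}$ where $p$ is the order of $a$ in the ambient group, whereas you argue via the bijectivity of left translation on a finite set; both are standard and correct, and you rightly identify finiteness as the crux, since the inverse $x \mapsto x \wlhd a$ of a right translation need not itself be a right translation.
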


\begin{proof}
Since~$\S$ is a qualgebra morphism (Lemma~\ref{L:Sa}), its image $\S(\Q)$ is a sub-qualgebra of the group qualgebra $\Aut(\Q)$, which is finite since~$\Q$ is finite. Let us now show that, in general, a non-empty finite sub-qualgebra~$R$ of a group qualgebra~$G$ is in fact a subgroup. Indeed, $R$ is stable under product since it is a sub-qualgebra; it contains the unit~$1$ of the group~$G$ since $1 = a^p$, where~$a$ is any element of~$R$ and~$p$ is its order in~$G$; and it contains all the inverses, since, with the previous notation, $a^{-1} = a^{p-1}$.
\end{proof}

Note that this lemma is false for squandles in general: a counter-example will be given below.

In a study of a qualgebra or squandle, the understanding of its local structure can be useful.

\begin{notation}\label{D:Local}
Take a qualgebra or a squandle~$\Q$ and an $a \in \Q$.
\begin{itemize}
\item The sub-qualgebra/sub-squandle of~$\Q$ generated by~$a$ is denoted by~$\Q_a$.
\item The set of fixed points~$x$ of~$\S_a$ (i.e., $(x)\S_a = x$) is denoted by $\FP(a)$.
\item The set of elements~$x$ of~$\Q$ fixing~$a$ (in the sense that $(a)\S_x = a$) is denoted by $\St(a)$.
\end{itemize}
\end{notation}

\begin{lemma}\label{L:FPandStab}
Take a qualgebra $(\Q,\lhd,\op)$ or a squandle $(\Q,\lhd,\sq)$, and an $a \in \Q$. The sets $\FP(a)\subseteq \Q$ and $\St(a)\subseteq \Q$ are both sub-qualgebras/sub-squandles of~$\Q$ containing~$\Q_a$.
\end{lemma}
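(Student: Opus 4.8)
The plan is to derive everything from the morphism $\S\colon \Q \to \Aut(\Q)$ of Lemma~\ref{L:Sa} and from the fact, also recorded there, that each $\S_a$ is a qualgebra/squandle automorphism. I would treat the qualgebra case in detail; the squandle case is word-for-word the same once every $\op$-closure argument is replaced by the corresponding $\sq$-closure argument. Before anything else I would dispose of the containment $\Q_a \subseteq \FP(a) \cap \St(a)$: since $\Q_a$ is by definition the smallest sub-qualgebra (resp.\ sub-squandle) containing $a$, once $\FP(a)$ and $\St(a)$ are shown to be sub-structures it suffices to note that both contain $a$ itself. This is immediate from idempotence \eqref{E:Idem}, as $a \lhd a = a$ says exactly that $\S_a$ fixes $a$ (so $a \in \FP(a)$) and that $a$ fixes $a$ (so $a \in \St(a)$).

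For $\FP(a)$ the key observation is that it is precisely the fixed-point set of the automorphism $\S_a$. I would then invoke the general fact that the fixed-point set of any qualgebra automorphism $\f$ is a sub-qualgebra: if $\f(x)=x$ and $\f(y)=y$, then $\f(x \lhd y) = \f(x)\lhd\f(y) = x\lhd y$, and likewise $\f(x \op y) = x \op y$ and $\f(x \wlhd y) = x \wlhd y$, the last because an automorphism of $(\Q,\lhd)$ automatically preserves $\wlhd$ (this operation being determined by $\lhd$ through \eqref{E:Inv}). Specialising $\f = \S_a$ gives that $\FP(a)$ is a sub-qualgebra, and the same argument with $\sq$ in place of $\op$ gives the sub-squandle statement.

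For $\St(a)$ the cleanest route is to write $\St(a) = \S^{-1}(H)$, where $H = \{\,g \in \Aut(\Q) \mid (a)g = a\,\}$ is the point-stabiliser of $a$. This $H$ is a subgroup of $\Aut(\Q)$, hence a sub-qualgebra (resp.\ sub-squandle) of the group qualgebra (resp.\ group squandle) $\Aut(\Q)$, being closed under conjugation and multiplication (resp.\ squaring). Since $\S$ is a qualgebra/squandle morphism, the preimage of a sub-structure is a sub-structure, and we are done. Should I prefer a self-contained verification, the closures can be spelled out directly: $\op$-closure is immediate from \eqref{E:QA1}, namely $a \lhd (x\op y) = (a\lhd x)\lhd y = a$ when $x,y \in \St(a)$ (and $\sq$-closure likewise from \eqref{E:SQA1}), while $\lhd$- and $\wlhd$-closure follow from the conjugation formula $\S_{x\lhd y} = \S_y^{-1}\S_x\S_y$ of Lemma~\ref{L:Sa}, which shows $(a)\S_{x\lhd y} = a$ whenever $\S_x$ and $\S_y$ both fix $a$.

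The content here is light, and I expect no serious obstacle. The only place demanding a little care is the closure of $\St(a)$ under $\lhd$ and $\wlhd$, since, unlike the $\op$-case, these do not collapse to a single axiom but genuinely require the interpretation of $\S$ as a map into a conjugation structure. Packaging $\St(a)$ as the preimage of the point-stabiliser subgroup sidesteps this entirely and, as a bonus, handles the qualgebra and squandle versions in one stroke.
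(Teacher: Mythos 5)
Your proposal is correct and follows essentially the same route as the paper: $\FP(a)$ is handled as the fixed-point set of the automorphism $\S_a$, $\St(a)$ as the preimage under the morphism $\S$ of the point-stabiliser subgroup of $\Aut(\Q)$, and the containment of $\Q_a$ follows since both are sub-structures containing $a$ by idempotence. The self-contained verification you sketch as a fallback is a fine alternative, but the packaged argument you lead with is exactly the paper's.
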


\begin{proof}
The assertion about $\FP(a)$ being a sub-qualgebra/sub-squandle of~$\Q$ holds true because~$\S_a$ is a qualgebra/squandle automorphism of~$\Q$. As for $\St(a)$, note that the set $\widetilde{\St}(a)$ of maps in $\Aut(\Q)$ stabilizing~$a$ is a subgroup of $\Aut(\Q)$, hence also a sub-qualgebra/sub-squandle, so $\St(a)$, which is its pre-image $\S^{-1}(\widetilde{\St}(a))$ along the qualgebra/squandle morphism~$\S$, is a sub-qualgebra/sub-squandle of~$\Q$ (cf. Lemma~\ref{L:Sa}).

Further, both $\FP(a)$ and $\St(a)$ contain~$a$ due to the idempotence axiom~\eqref{E:Idem}. Since they were both shown to be sub-qualgebras/sub-squandles of~$\Q$, they have to include the whole~$\Q_a$.
\end{proof}

\begin{lemma}\label{L:TrivialQuandle}
Consider a set~$\Q$ endowed with a {trivial} quandle operation $a \lhd_0 b = a$. Then any unary operation~$\sq$ completes it into a squandle. Further, a binary operation~$\op$ completes it into a qualgebra if and only if~$\op$ is commutative. 
\end{lemma}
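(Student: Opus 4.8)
The plan is to verify the squandle and qualgebra axioms by direct substitution, exploiting the fact that under the trivial operation any subexpression of the form $x \lhd_0 y$ collapses to its leftmost argument $x$. First I would record a preliminary observation: the companion operation $\wlhd$ attached to $\lhd_0$ is itself trivial. Indeed, invertibility \eqref{E:Inv} together with $a \lhd_0 b = a$ forces $(a \wlhd b) \lhd_0 b = a$, and since the left-hand side equals $a \wlhd b$, we get $a \wlhd b = a$. Consequently every iterated $\lhd_0$- or $\wlhd$-expression evaluates to its leftmost argument, and the quandle axioms \eqref{E:SD}--\eqref{E:Idem} hold (as they must, $(\Q,\lhd_0)$ being a quandle by hypothesis).

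For the squandle statement I would substitute into \eqref{E:SQA1} and \eqref{E:SQA2} for an arbitrary unary operation~$\sq$. In \eqref{E:SQA1} the left-hand side $a \lhd_0 b^2$ and the right-hand side $(a \lhd_0 b) \lhd_0 b$ both reduce to $a$; in \eqref{E:SQA2} the left-hand side $a^2 \lhd_0 b$ and the right-hand side $(a \lhd_0 b)^2$ both reduce to $a^2$. Thus both axioms hold irrespective of the choice of~$\sq$, giving the first assertion.

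For the qualgebra statement I would examine the three additional axioms \eqref{E:QA1}, \eqref{E:QA2} and \eqref{E:QAComm}. Axiom \eqref{E:QA1} holds automatically, since $a \lhd_0 (b \op c)$ and $(a \lhd_0 b) \lhd_0 c$ both equal~$a$. Axiom \eqref{E:QA2} also holds for every~$\op$: the left-hand side $(a \op b) \lhd_0 c$ equals $a \op b$, and the right-hand side $(a \lhd_0 c) \op (b \lhd_0 c)$ equals $a \op b$ as well. The only axiom carrying information is semi-commutativity \eqref{E:QAComm}: using $a \lhd_0 b = a$ it reads $a \op b = b \op (a \lhd_0 b) = b \op a$, so it is equivalent to the commutativity of~$\op$. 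This yields the desired equivalence in both directions.

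There is no real obstacle here beyond the bookkeeping; the single point that must not be overlooked is the preliminary reduction of~$\wlhd$ to a trivial operation, which is what makes the unambiguous evaluation of both sides of each axiom immediate.
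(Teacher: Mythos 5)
Your proof is correct and follows essentially the same route as the paper's: a direct check that, under the trivial operation, every qualgebra and squandle axiom holds automatically except semi-commutativity~\eqref{E:QAComm}, which collapses to $a \op b = b \op a$. The paper states this in one sentence; your version merely makes the verification (including the triviality of~$\wlhd$) explicit.
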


\begin{proof}
With the trivial quandle operation, all qualgebra and squandle axioms automatically hold true except for~\eqref{E:QAComm}, which is equivalent to the commutativity of~$\op$. 
\end{proof}

\begin{definition}\label{D:TrivialQuandle}
The qualgebras/squandles from the lemma above are called \emph{trivial}.
\end{definition}

Observe that colorings by trivial qualgebras/squandles do not distinguish over-crossings from under-crossings, hence the corresponding counting invariants can capture only the underlying abstract graph and not the way it is knotted in~$\RR^3$. However, weight invariants can be sensible to the knotting information even for trivial structures.

In size $3$, all qualgebras/squandles turn out to be trivial:

\begin{proposition}\label{L:Qualgebra3}
A non-trivial qualgebra or squandle has at least $4$ elements.
\end{proposition}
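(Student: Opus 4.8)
The plan is to reduce everything to squandles and then argue with right translations. Every qualgebra $(\Q,\lhd,\op)$ becomes a squandle $(\Q,\lhd,\sq)$ with $a^2 = a\op a$ (Example~\ref{EX:QualgebraAsSquandle}), and the two structures share the same underlying quandle; moreover, by Lemma~\ref{L:TrivialQuandle} triviality depends only on the underlying quandle (a squandle is trivial iff its quandle is, and a qualgebra with trivial quandle is forced to be trivial since $\op$ must then be commutative). Consequently a non-trivial qualgebra induces a non-trivial squandle on the same set, so it suffices to prove the statement for squandles, in the sharp form: \emph{a squandle whose underlying quandle is non-trivial has at least $4$ elements}, equivalently every squandle with $\#\Q \le 3$ has trivial operation $a\lhd b = a$.

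First I would rewrite the relevant axioms via the right translations $\S_a$ of Notation~\ref{D:Translations}. Recall that each $\S_a$ is a bijection of~$\Q$ (Axiom~\eqref{E:Inv}) fixing~$a$ (Axiom~\eqref{E:Idem}) and respecting~$\lhd$ (Axiom~\eqref{E:SD}). Axiom~\eqref{E:SQA1} reads exactly as $\S_{a^2} = \S_a^2$, while Axiom~\eqref{E:SQA2}, specialized to $b=a$, gives $a^2 \lhd a = (a\lhd a)^2 = a^2$, i.e. $a^2 \in \FP(a)$.

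The core is then a short uniform argument. Suppose $\Q$ is a squandle with $\#\Q \le 3$ whose quandle is non-trivial; then some translation $\S_a$ is not the identity. As $\S_a$ is a bijection fixing~$a$, it permutes $\Q\setminus\{a\}$, a set of at most two elements, and being non-trivial it must transpose two distinct elements $b,c$; hence $\#\Q = 3$, $\S_a = (b\,c)$, and $\FP(a) = \{a\}$. Now $a^2 \in \FP(a)$ forces $a^2 = a$, so $\S_{a^2} = \S_a \neq \Id$; but \eqref{E:SQA1} gives $\S_{a^2} = \S_a^2 = \Id$, a transposition squaring to the identity --- a contradiction. Thus no non-trivial squandle of size $\le 3$ exists, the cases $\#\Q \in \{1,2\}$ being subsumed (there the only bijection fixing~$a$ on at most one remaining point is the identity, so every $\S_a = \Id$). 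Translating back through the reduction, a non-trivial qualgebra or squandle has $\ge 4$ elements.

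The only point needing care --- the main, and minor, obstacle --- is the elementary permutation fact that a non-identity bijection of an at most $3$-element set fixing one chosen point must be the transposition of the remaining two; this pins $\FP(a)$ down to $\{a\}$ and drives the contradiction. I note that for qualgebras one could instead invoke Lemma~\ref{L:Sa'}, forcing $\S(\Q)$ to be a subgroup of $\Aut(\Q)$, which fails for the two non-trivial $3$-element quandles (the dihedral one of Example~\ref{EX:NonQualgebras} and the quandle with a single non-trivial right translation); but the squandle argument above is shorter and disposes of both structures at once.
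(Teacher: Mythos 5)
Your proof is correct and uses essentially the same ingredients as the paper's: the translation identity $\S_{a^2}=\S_a^2$ and the fact $a^2\in\FP(a)$ (Lemma~\ref{L:FPandStab}), with the fixed-point/moved-point count merely reorganized as a contradiction in size $\le 3$ instead of the paper's direct exhibition of four distinct elements ($a$, $a^2$, and two points moved by~$\S_a$). Your preliminary reduction to squandles is valid but unnecessary, since the right-translation lemmas — and hence the paper's argument — apply to qualgebras and squandles uniformly.
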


\begin{proof}
Let~$\Q$ be a non-trivial qualgebra or squandle, and $a$ be its element with non-trivial right translation~$\S_a$. Then $\S_{a^2} = \S_a^2$ is different from~$\S_a$, so $\FP(a)$ contains at least $2$ distinct elements~$a$ and~$a^2$ (cf. Lemma~\ref{L:FPandStab}). Further, since $S_a \in \Aut(\Q)$ is not the identity, at least two elements of~$\Q$ should lie outside $\FP(a)$. Altogether, one gets at least $4$ elements.
\end{proof}

We finish by showing that every qualgebra/squandle is ``locally trivial'':

\begin{lemma}\label{L:LocTrivial}
Take a qualgebra $(\Q,\lhd,\op)$ or a squandle $(\Q,\lhd,\sq)$, and an $a \in \Q$. 
The sub-qualgebra/sub-squandle~$\Q_a$ of~$\Q$ generated by~$a$ is trivial. In the qualgebra case, the restriction of operation~$\op$ to~$\Q_a$ is commutative. 
\end{lemma}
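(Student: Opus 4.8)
The plan is to reduce the triviality of $\lhd$ on $\Q_a$ to a statement about the right translations $\S_x$, and then to exploit the morphism $\S$ of Lemma~\ref{L:Sa} together with the fixed-point information of Lemma~\ref{L:FPandStab}. Triviality of $\lhd$ on $\Q_a$ amounts to saying that $(x)\S_y = x$ for all $x,y \in \Q_a$, i.e. that every translation $\S_y$ with $y \in \Q_a$ fixes $\Q_a$ pointwise, so this is what I would aim to establish.

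First I would record two facts. By Lemma~\ref{L:FPandStab}, $\Q_a \subseteq \FP(a)$, so $\S_a$ fixes every element of $\Q_a$; that is, $(x)\S_a = x$ for all $x \in \Q_a$. Secondly, since $\S$ is a qualgebra/squandle morphism by Lemma~\ref{L:Sa}, the image $\S(\Q_a)$ equals the sub-qualgebra/sub-squandle of the group structure $\Aut(\Q)$ generated by $\S(a) = \S_a$ (the image of a generated substructure under a morphism being generated by the image of the generators).

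The core step is to pin down this generated substructure. I would verify that the set of positive powers $\{\S_a^n : n \ge 1\}$ is itself a sub-qualgebra and a sub-squandle of $\Aut(\Q)$ containing $\S_a$: as $\lhd$ is conjugation, $\op$ is multiplication, and $\sq$ is squaring in the group structure, one has $\S_a^i \lhd \S_a^j = \S_a^i$, $\S_a^i \wlhd \S_a^j = \S_a^i$, $\S_a^i \op \S_a^j = \S_a^{i+j}$ and $(\S_a^i)^2 = \S_a^{2i}$, so the powers are closed under all operations. Consequently $\S(\Q_a) \subseteq \{\S_a^n : n \ge 1\}$, whence every $y \in \Q_a$ has $\S_y = \S_a^m$ for some $m \ge 1$. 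Combining this with the first fact, for any $x,y \in \Q_a$ we obtain $x \lhd y = (x)\S_y = (x)\S_a^m = x$, which shows that $\lhd$ restricts to the trivial quandle operation on~$\Q_a$.

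It then remains, in the qualgebra case, to check commutativity of $\op$ on $\Q_a$, which follows at once from Axiom~\eqref{E:QAComm}: for $x,y \in \Q_a$ one has $x \op y = y \op (x \lhd y) = y \op x$ using the triviality of $\lhd$ just proved. The step I expect to be the main obstacle is the core one --- identifying $\S(\Q_a)$ --- since it relies on the fact that a singly-generated sub-qualgebra/sub-squandle of a group qualgebra/squandle consists of positive powers of the generator; however, since I only need the inclusion into $\{\S_a^n : n \ge 1\}$ and not an exact enumeration of which powers occur, the explicit closure computation above renders this routine, and everything else is then immediate.
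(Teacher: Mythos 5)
Your proof is correct, but it takes a genuinely different route from the paper's. The paper's argument is a short fixed-point duality entirely internal to~$\Q$: by Lemma~\ref{L:FPandStab}, $\Q_a \subseteq \St(a)$, so every $x \in \Q_a$ fixes~$a$, i.e.\ $a \in \FP(x)$; since $\FP(x)$ is itself a sub-qualgebra/sub-squandle (the same lemma), it must contain all of~$\Q_a$, and triviality of~$\lhd$ on~$\Q_a$ follows at once --- no appeal to Lemma~\ref{L:Sa} and no computation in $\Aut(\Q)$ is required. You instead use only the $\FP(a)$ half of Lemma~\ref{L:FPandStab} and compensate by pushing into the automorphism group via the morphism~$\S$ of Lemma~\ref{L:Sa}: you locate $\S(\Q_a)$ inside the set $\{\S_a^n : n \ge 1\}$, which is closed under conjugation, multiplication and squaring because powers of a fixed element commute, so every right translation coming from~$\Q_a$ is a power of~$\S_a$ and hence fixes~$\Q_a$ pointwise. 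Your route is somewhat longer but extracts slightly more information (all translations $\S_y$, $y \in \Q_a$, are powers of the single automorphism~$\S_a$, in particular they pairwise commute), whereas the paper's argument is shorter and exploits the symmetry between~$\FP$ and~$\St$ rather than the structure of~$\Aut(\Q)$. For the commutativity of~$\op$ you compute directly from Axiom~\eqref{E:QAComm}, while the paper cites Lemma~\ref{L:TrivialQuandle}; these amount to the same one-line computation.
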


\begin{proof}
Lemma~\ref{L:FPandStab} shows that every $x \in \Q_a$ fixes~$a$. Thus, the set $\FP(x)$ contains~$a$; but, being a sub-qualgebra/sub-squandle of~$\Q$ (again due to Lemma~\ref{L:FPandStab}), it should contain the whole~$\Q_a$. The triviality of~$\lhd$ restricted to~$\Q_a$ follows. The commutativity of~$\op$ on~$\Q_a$ is now a consequence of Lemma~\ref{L:TrivialQuandle}.
\end{proof}

\subsection*{Classification of qualgebras of size $4$}

Since trivial qualgebras/squandles were completely described in Lemma~\ref{L:TrivialQuandle}, only non-trivial structures are studied in the remainder of this section.

We start with a full list of $9$ non-trivial qualgebra structures on a $4$ element set $\P = \{p,q,r,s\}$ (up to isomorphism). Involution
\begin{equation}\label{E:tau}
(p)\rrho = q, (q)\rrho = p, (r)\rrho = r, (s)\rrho = s
\end{equation}
will be used in this description.

\begin{proposition}\label{P:Qualgebras4}
Any non-trivial qualgebra with $4$ elements is isomorphic to the set~$\P$ with the following operations (here~$x$ and~$y$ are arbitrary elements of~$\P$):
\begin{align*}
&x \lhd r = (x)\rrho, & x \lhd y &= x \text{  if } y \neq r;&\\
& r \op r = s, & r \op x &= x \op r = r \text{  if } x \neq r, & \\
& s \op s = s, & q \op s &= s \op q \in \{p,q,s\}, & p \op s &= s \op p = (q \op s)\rrho,\\
& p \op q = q \op p = s, & q \op q &\in \{p,q,s\}, & p \op p &= (q \op q)\rrho.
\end{align*}
Moreover, for any choices of $q \op s$ and $q \op q$ in $\{p,q,s\}$, the resulting structure is a qualgebra.
\end{proposition}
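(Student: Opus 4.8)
The proposition splits into a classification direction (every non-trivial size-$4$ qualgebra is isomorphic to one on the list) and a verification direction (the ``moreover''). The plan is to phrase everything through the right translations $\S_a\colon x\mapsto x\lhd a$ of Notation~\ref{D:Translations}, since Lemma~\ref{L:Sa} turns the axioms into statements about the morphism $\S\colon\Q\to\Aut(\Q)$: Axiom~\eqref{E:QA1} becomes $\S_{b\op c}=\S_b\S_c$, Axiom~\eqref{E:QA2} says each $\S_c$ is an automorphism of $(\Q,\op)$, and Axiom~\eqref{E:QAComm} reads $a\op b=b\op((a)\S_b)$. With this dictionary the verification direction is the easy one, which I would treat last.

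For the classification I would first pin down the underlying quandle. Given a non-trivial $\Q$, Proposition~\ref{L:Qualgebra3} yields $a$ with $\S_a\neq\Id$; since $\S_a$ fixes $a$, it is a permutation of the other three elements, hence a transposition or a $3$-cycle. I would rule out the $3$-cycle case using that $\Q_a\subseteq\FP(a)$ (Lemma~\ref{L:FPandStab}) and $\Q_a$ is trivial (Lemma~\ref{L:LocTrivial}): a $3$-cycle fixes only $a$, forcing $\Q_a=\{a\}$, so $a\op a\in\Q_a$ gives $a\op a=a$ and then $\S_a=\S_{a\op a}=\S_a^2$ by~\eqref{E:QA1}, contradicting order~$3$. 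Thus $\S_a=\rrho$ is the transposition $(p\,q)$ of two elements, fixing $a$ and one further element; I set $r:=a$, and using $\S_{r\op r}=\S_r^2=\Id$ together with $r\op r\in\FP(r)=\{r,s\}$ I identify $s:=r\op r$ as the fourth element, with $\S_s=\Id$. To show no other translation is non-trivial I would use that $\S(\Q)$ is a subgroup of $\Aut(\Q)$ (Lemma~\ref{L:Sa'}) of order at most $4$: a second non-trivial $\S_p$ would be a transposition that, together with $\rrho$, generates either an $S_3$ of order $6$ or a Klein four-group whose non-identity elements fail to fix the points $\S_p,\S_q$ must fix; a short check contradicts $|\S(\Q)|\le 4$ in the first case and the fixing constraint in the second. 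This fixes the quandle exactly as stated.

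With the quandle in hand I would read off $\op$ from the three axioms. Reading \eqref{E:QA1} as $\S_{b\op c}=\S_b\S_c$ says $b\op c=r$ precisely when exactly one of $b,c$ equals $r$; this yields $r\op x=r$ for $x\neq r$, $r\op r=s$, and that every product of elements of $\{p,q,s\}$ stays in $\{p,q,s\}$. Axiom~\eqref{E:QAComm} for $b\neq r$ gives commutativity of $\op$, and for $b=r$ recovers $x\op r=r$. Axiom~\eqref{E:QA2} with $c=r$ says $\rrho$ is an automorphism of $(\Q,\op)$, i.e. $(x\op y)\rrho=(x)\rrho\op(y)\rrho$; combined with commutativity this makes $p\op q$ and $s\op s$ fixed by $\rrho$, hence forced into $\{r,s\}$ and, avoiding $r$, equal to $s$, while it ties $p\op p=(q\op q)\rrho$ and $p\op s=(q\op s)\rrho$. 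Thus $q\op q$ and $q\op s$ are the only free data, each ranging over $\{p,q,s\}$, which is exactly the asserted list.

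Finally, for the ``moreover'' I would observe that each reformulated condition holds for \emph{every} choice of $q\op q,q\op s\in\{p,q,s\}$: the quandle axioms hold because $\rrho$ is an involution and $\S$ is conjugation; $\S_{b\op c}=\S_b\S_c$ holds because the free values avoid $r$, so the rule ``$b\op c=r$ iff exactly one argument is $r$'' is unaffected; each $\S_c\in\{\Id,\rrho\}$ is an $\op$-automorphism because the table is set up in $\rrho$-paired form; and \eqref{E:QAComm} reduces to commutativity plus four $b=r$ checks, none of which sees the free values. I expect the main obstacle to be the first step of the classification, namely the permutation-group argument that nails down the quandle (ruling out a $3$-cycle and a second non-trivial transposition); after that, both the recovery of $\op$ and the final verification are routine bookkeeping.
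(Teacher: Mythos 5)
Your proposal is correct and takes essentially the same route as the paper: both arguments pin down the quandle operation by studying the translation morphism $\S\colon\P\to\Aut(\P)$ (using Lemmas~\ref{L:Sa}, \ref{L:Sa'} and~\ref{L:FPandStab} to force $\S_r=\rrho$ and $\S_y=\Id$ for $y\neq r$; your $3$-cycle exclusion and $S_3$/Klein four-group case split is a minor tactical variant of the paper's ``$\#\FP(x)\ge 2$, hence only transpositions'' argument), and then both read the multiplication table off Axioms~\eqref{E:QA1}--\eqref{E:QAComm} as a chain of equivalences, which yields the ``moreover'' part at the same time. The only piece of the paper's proof you omit is the check that the nine resulting structures are pairwise non-isomorphic; the proposition as quoted does not assert this, so it is not a gap in your proof, but it is needed to justify the count of ``$9$ structures up to isomorphism'' claimed in the text preceding the proposition.
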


In order to better feel the qualgebra structures from the proposition, think of the element~$r$ as the \underline{r}otation (of~$p$ or~$q$), and of~$s$ as the \underline{s}quare (of~$r$).

\begin{proof}
Fix a qualgebra structure on~$\P$. Observe first that for any $x \in \P$, one has $\#\FP(x)\ge 2$. Indeed, otherwise the sub-qualgebra~$\P_x$ generated by~$x$, which is contained in $\FP(x)$ due to Lemma~\ref{L:FPandStab}, would consist of~$x$ itself only, and so, according to Lemma~\ref{L:Sa}, $\S(\{\P_x\}) =\{\S_x\}$ would be a $1$-element sub-qualgebra of $\Aut(\P) \subseteq S_4$, which is possible only if $\S_x = \Id$, giving $\#\FP(x) = 4$. 

Now, condition $\#\FP(x)\ge 2$ implies that $\S_x$ moves at most $2$ elements of~$\P$, so it is a transposition or the identity. But then $\S(\P)$ is a subgroup of~$S_4$ (Lemma~\ref{L:Sa'}) containing nothing except transpositions and the identity, hence either $\S(\P) = \{\Id\}$ (and thus the the qualgebra is trivial), or, without loss of generality,   
$$\S(\P) = \{\Id,\rrho\},$$ 
with, say, $\S_r = \rrho$. We next show that $\S^{-1}(\rrho)$ consists of~$r$ only. Indeed, 
$\S(\P_r)$ is a sub-qualgebra of $\Aut(\P)$ (Lemma~\ref{L:Sa}) contained in $\S(\FP(r))$ (Lemma~\ref{L:FPandStab}), so $\S(\FP(r)) = \{\S(r),\S(s)\} = \{\rrho,\S_s\}$ should include $\rrho^2=\Id$, hence $\S_s = \Id$, implying $s \notin \S^{-1}(\rrho)$. As for~$p$ and~$q$, they are not fixed by~$\rrho$, so they cannot lie in $\S^{-1}(\rrho)$.

We can thus restrict our analysis to the case $\S_r = \rrho$ and $\S_y = \Id$ for $y \neq r$. This choice of operation~$\lhd$ guarantees~\eqref{E:Inv} and~\eqref{E:Idem}. Axiom~\eqref{E:SD} can be checked directly, but we prefer recalling that it is a consequence of \eqref{E:QA1}-\eqref{E:QAComm}.

Let us now analyze specific qualgebra axioms \eqref{E:QA1}-\eqref{E:QAComm}. First, \eqref{E:QA1} translates as $\S_{b \op c} = \S_b \S_c$, which here means that $r \op x = x \op r = r$ for all $x \neq r$, while all other products take value in $\{p,q,s\}$. Next, \eqref{E:QA2} is equivalent to all maps from $\S(\P)$ respecting the operation~$\op$, which here translates as $(a \op b)\rrho = (a)\rrho \op (b)\rrho$. This means that $r \op r$ and $s \op s$ are both $\rrho$-stable, so, lying in $\{p,q,s\}$, they can equal only~$s$; this gives nothing new when one of $a,b$ is~$r$ and the other one is not; and it divides the remaining ordered couples into pairs, with the product for one couple from the pair determined by that for the other (e.g., $p \op s = (q \op s)\rrho$). At last, \eqref{E:QAComm} is automatic when one of the elements~$a$ and~$b$ is~$r$ and the other one is~$p$ or~$q$, and for the other couples it means the commutativity of~$\op$. In particular, this commutativity gives $p \op q = q \op p$, which, combined with $(p \op q)\rrho = (p)\rrho \op (q)\rrho = q \op p$, implies that $p \op q$ is $\rrho$-stable, so, lying in $\{p,q,s\}$, it can equal only~$s$. Putting all these conditions together, one gets the description of~$\op$ given in the statement.

It remains to check that the $9$ qualgebra structures obtained are pairwise non-isomorphic. Let $f:\P \rightarrow \P$ be a bijection intertwining structures $(\lhd,\op_1)$ and $(\lhd,\op_2)$ from our list. Since~$r$ is the only element of~$\P$ with $\S_a \neq \Id$, one has $(r)f=r$, and also $(s)f=(r \op_1 r)f = r \op_2 r =s$. Two options emerge: either $(q)f=q$ and $(p)f=p$, in which case $\op_1$ and $\op_2$ automatically coincide; or $(q)f=p$ and $(p)f=q$, that is, $f = \rrho$, in which case one has
$$x \op_2 y = ((x)f^{-1} \op_1 (y)f^{-1} )f = ((x)\rrho^{-1} \op_1 (y)\rrho^{-1} )\rrho = x \op_1 y,$$ 
since, being a right translation, $\rrho = \S_r$ respects~$\op_1$. One concludes that there are no isomorphisms between different qualgebra structures from our list.
\end{proof}

\subsection*{Properties and examples}

In spite of very close definitions, the $9$ structures above exhibit quite different algebraic properties. Some of them are studied below.

\begin{proposition}\label{P:Qualgebras4Prop}
The operations~$\op$ from Proposition~\ref{P:Qualgebras4} are
\begin{itemize}
\item all commutative;
\item never cancellative;
\item unital if and only if $q \op s = s \op q = q$ and $p \op s = s \op p = p$;
\item associative if and only if $q \op s = s \op q = p \op s = s \op p = s$ and either $q \op q = p \op p = s$, or $q \op q = q$ and $p \op p = p$;
\item never unital associative.
\end{itemize}
\end{proposition}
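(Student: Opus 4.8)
The operations are \emph{commutative} by construction (this was already established in the proof of Proposition~\ref{P:Qualgebras4}), and they are \emph{never cancellative}: the equalities $r \op p = r = r \op q$ hold with $p \neq q$, so $r$ can never be cancelled. For the three remaining items I would introduce the two free parameters $\alpha := q \op q$ and $\beta := q \op s$, recording that $p \op p = (\alpha)\rrho$ and $p \op s = (\beta)\rrho$, with $\alpha,\beta \in \{p,q,s\}$. Throughout I use three structural facts read off from Proposition~\ref{P:Qualgebras4}: the subset $\{p,q,s\}$ is closed under~$\op$; one has $x \op r = r \op x = r$ whenever $x \neq r$, while $r \op r = s$; and $\rrho = \S_r$ is an automorphism of~$\op$ (by Axiom~\eqref{E:QA2}), which is why $p \op p$ and $p \op s$ are determined by their $\rrho$-images.

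For \emph{unitality}, I would first rule out $p$, $q$ and $r$ as candidate units: since $p \op q = s \notin \{p,q\}$, neither $p$ nor $q$ can be neutral, and $r \op r = s \neq r$ eliminates $r$. Thus only~$s$ can be a unit. As $s \op r = r$ and $s \op s = s$ hold in all cases, $s$ is neutral if and only if $s \op p = p$ and $s \op q = q$; both amount to $\beta = q$ (which then forces $p \op s = (\beta)\rrho = p$). This is exactly the displayed condition.

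\emph{Associativity} is the main obstacle, and I would organize it by the number of entries equal to~$r$ in a triple. The triple $(r,r,q)$ already shows that associativity forces $\beta = s$, since $(r \op r)\op q = s \op q = \beta$ while $r \op (r \op q) = r \op r = s$. Conversely, assume $\beta = s$; then $p \op s = s$ as well, so $s$ is absorbing on $\{p,q,s\}$. A short case check using $x \op r = r$ for $x \neq r$, together with $r \op r = s$ and this absorption, shows that \emph{every} triple containing at least one~$r$ associates. For triples inside $\{p,q,s\}$, those meeting~$s$ associate by absorption, so only triples in $\{p,q\}$ remain, and their behaviour is governed by~$\alpha$: if $\alpha = s$ every binary product on $\{p,q\}$ equals~$s$, so associativity is automatic; if $\alpha = q$ the elements $p,q$ are idempotent with $p \op q = s$ absorbing, and a direct verification yields associativity; if $\alpha = p$ then $(p,p,q)$ fails, since $(p \op p)\op q = q \op q = p$ whereas $p \op (p \op q) = p \op s = s$. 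Hence \eqref{E:Ass} holds if and only if $\beta = s$ and $\alpha \in \{q,s\}$, i.e.\ $q \op s = s \op q = p \op s = s \op p = s$ together with either $q \op q = p \op p = s$ or $q \op q = q$, $p \op p = p$.

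Finally, the impossibility of being \emph{unital and associative at once} follows formally from the two preceding characterizations: unitality requires $\beta = q$ and associativity requires $\beta = s$, and these are incompatible because $q \neq s$. I expect the only delicate bookkeeping to lie in the associativity case split; the key simplification is that $\beta = s$ makes~$s$ absorbing, which collapses the $r$-triples and the $s$-triples and leaves just the small $\{p,q\}$ computation separating $\alpha = p$ from $\alpha \in \{q,s\}$.
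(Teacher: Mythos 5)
Your proposal is correct, and its skeleton coincides with the paper's: commutativity is read off the explicit table, non-cancellativity comes from the absorbing element~$r$, the element~$s$ is the only candidate unit and is one exactly when $q \op s = q$, associativity forces $q \op s = s$ via the very same computation $(r \op r)\op q = s \op q$ versus $r \op (r \op q) = s$, and the last item is the formal clash between $q\op s = q$ and $q \op s = s$. The genuine divergence is in proving that the two surviving choices of $q \op q$ are indeed associative. The paper avoids enumerating triples by embedding $(\P,\op)$ into $\ZZ_4^{\times 3}$ with componentwise multiplication ($p \mapsto (a,0,1)$, $q \mapsto (0,a,1)$, $r \mapsto (0,0,3)$, $s \mapsto (0,0,1)$, taking $a=2$ when $q \op q = s$ and $a=1$ when $q \op q = q$), so that associativity is inherited from~$\ZZ_4$; you instead run a direct case check, organized by the observation that $\beta = s$ makes~$s$ absorbing on $\{p,q,s\}$, which collapses all triples containing~$r$ or~$s$ and leaves only the eight triples in $\{p,q\}$. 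Your route is more elementary and self-contained, at the cost of the bookkeeping you acknowledge (which does go through: the only place absorption is genuinely needed among the $r$-triples is $(r,r,x)$). A second, minor difference: the paper excludes $q \op q = p$ by the algebraic chain $q = (q \op q)\rrho = p \op p = p \op (q \op q) = (p \op q)\op q = s \op q = s$, using that $\rrho$ respects~$\op$, whereas you exhibit the concrete failing triple $(p,p,q)$; the two are equivalent, and yours is arguably the more transparent certificate of non-associativity.
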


\begin{proof}
The commutativity is read from the explicit definition of~$\op$. The non-cancellativity follows from the ``absorbing'' property of the element~$r$ with respect to~$\op$.

Further, relations $q \op p = s$ and $r \op s = r$ imply that~$s$ is the only possible neutral element. Examining the definition of~$\op$, one sees that it is indeed so if and only if the value of $q \op s = s \op q$ is chosen to be~$q$ (implying $p \op s = s \op p = (q \op s)\rrho = (q)\rrho = p$).

Associativity is trickier to deal with. First, if~$\op$ is associative, then $s \op q$ has to equal~$s$:
$$s \op q = (r \op r) \op q = r \op (r \op q) = r \op r = s.$$
Since $(s)\rrho =s$, this implies $q \op s = p \op s = s \op p = s$. Next, $q \op q$ can not be~$p$, since this would give 
$$ q = (p)\rrho = (q \op q)\rrho = (q)\rrho \op (q)\rrho = p \op p = p \op (q \op q) = (p \op q) \op q = s \op q = s.$$
Thus, either $q \op q = p \op p = s$, or $q \op q = q$ and $p \op p = p$. It remains to show that these two operations~$\op$ are indeed associative. Consider the direct product $\ZZ_4^{\times 3}$ endowed with the term-by-term multiplication~$\cdot$, and define an injection $\P \hookrightarrow \ZZ_4^{\times 3}$ by
\begin{align*}
p &\mapsto (a,0,1), & r &\mapsto (0,0,3),\\
q &\mapsto (0,a,1), & s &\mapsto (0,0,1),
\end{align*}
for some $a \neq 0$. One easily checks that this injection intertwines operations~$\op$ and~$\cdot$, where one takes $a = 2$ for the choice $q \op q = p \op p = s$, and $a = 1$ for the choice $q \op q = q$, $p \op p = p$. Thus the associativity of~$\cdot$ implies that of~$\op$.  

To conclude, notice that if a unital associative~$\op$ existed, then it would satisfy incompatible conditions $q \op s = q$ and $q \op s = s$.
\end{proof}

Thus, $3$ non-trivial qualgebra structures with $4$ elements are unital, and $2$ are associative. Further, non of these qualgebras can be a sub-qualgebra of a group qualgebra because of the non-cancellativity.

\begin{example}\label{EX:cuffs}
Let us now use the $4$-element qualgebras obtained above for distinguishing the standard cuff graph~$\Cuff_{st}$ from the Hopf cuff graph~$\Cuff_{H}$. Consider their diagrams~$\D_{st}$ and~$\D_H$ depicted on Figure~\ref{pic:cuffs}, and choose the qualgebra~$\P$ from Proposition~\ref{P:Qualgebras4} with $q \op q = s$ and $q \op s = q$. The multiplication~$\op$ of this qualgebra can be briefly described by saying that it is commutative with a neutral element $s$, that the element~$r$ absorbs everything but itself (in the sense that $r \op x = r$), and that $x \op y = s$ for $x=y$ and for $x = (y)\rrho$.

With the orientation on Figure~\ref{pic:cuffs}, the coloring rules for~$\D_{st}$ around $3$-valent vertices read $b \op a = a$ and $b \op c = c$. Further, note that every orientation of~$\D_{st}$ is a well-orientation, and that an orientation change results only in an argument inversion in one or all of the relations above; since~$\op$ is commutative, this does not change the relations. Summarizing, for any orientation of~$\D_{st}$ one gets a bijection
$$\Col_{\P}(\D_{st}) \overset{bij}{\longleftrightarrow} \{(a,b,c) \in \P \,|\, b \op a = a, \,b \op c = c\}.$$
Now, equation $b \op a = a$ (and similarly $b \op c = c$) has $6$ solutions in~$\P$: either~$b$ is the unit~$s$, and~$a$ is arbitrary; or~$b$ is~$p$ or~$q$, and $a=r$. Searching for pairs of solutions with the same~$b$, one gets
$$\Col_{\Q}(\D_{st}) \overset{bij}{\longleftrightarrow} \{(a,s,c) \,|\, a,c \in \Q \} \bigsqcup \{(r,b,r) \,|\, b \in \{p,q\} \},$$
and so $\#\Col_{\P}(\D_{st}) = 4 \cdot 4 + 2 = 18$.

Let us now turn to the Hopf cuff graph diagram~$\D_H$, oriented as shown on Figure~\ref{pic:cuffs}. Coloring rules around crossing points allow one to express~$a'$ and~$c'$ in terms of other colors: $c' = c \lhd a$, $a' = a \wlhd c'$. In our qualgebra~$\P$, all the translations~$\S_x$ (recall Notation~\ref{D:Translations}) are either the identity or~$\rrho$, so they are pairwise commuting involutions, implying $a' = a \wlhd c'= (a)\S_{c \lhd a}^{-1} = (a)\S_{c \lhd a} = (a)(\S_c \lhd \S_a) = (a)\S_c =  a \lhd c$. Further, around $3$-valent vertices coloring rules give $b \op a = a'$ and $b \op c = c'$. Using the preceding remarks, this gives
$$\Col_{\P}(\D_{H}) \overset{bij}{\longleftrightarrow} \{(a,b,c) \in \P \,|\, b \op a = a \lhd c, \,b \op c = c \lhd a\}.$$
The latter system admits no solutions with $b=r$. For $b=s$, the equations become $ a = a \lhd c$ and $c = c \lhd a$, for which the solutions are all pairs $(a,c)$ except $a=r$, $c \in \{p,q\}$ or vice versa. In the remaining case $b \in \{p,q\}$, the only possibility is $a = c =r$.
Summarizing, one gets
$$\Col_{\P}(\D_{H}) \overset{bij}{\longleftrightarrow} \{(a,s,c) \,|\, a,c \in \{p,q,s\} \}\bigsqcup \{(r,s,r),(r,s,s),(s,s,r) \} \bigsqcup \{(r,b,r) \,|\, b \in \{p,q\} \},$$
and so $\#\Col_{\P}(\D_{H}) = 3 \cdot 3 + 3 + 2 = 14 \neq \#\Col_{\P}(\D_{st})$. With the orientation remarks made for~$\D_{st}$, Corollary~\ref{L:QualgebraCountInvar} now guarantees that the two unoriented cuff graphs are not mutually isotopic.

\begin{center}
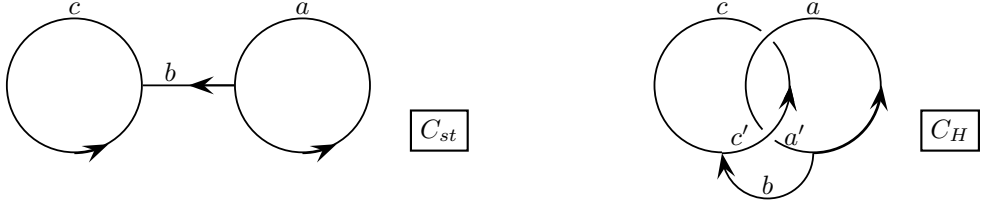
 
\psset{unit=1.2mm}
\begin{pspicture}(-10,-13)(60,11)
\psline[linewidth=0.2](17.5,0)(7.5,0)
\psline[linewidth=0.2,arrowsize=2]{->}(17.5,0)(12.5,0)
\pscircle[linewidth=0.2](0,0){7.5}
\psarc[linewidth=0.2,arrowsize=2]{->}(0,0){7.5}{270}{300}
\pscircle[linewidth=0.2](25,0){7.5}
\psarc[linewidth=0.2,arrowsize=2]{->}(25,0){7.5}{270}{300}
\rput(25,8.5){$a$}
\rput(0,8.5){$c$}
\rput(10.5,1.5){$b$}
\rput(40,-5){\psframebox{$\Cuff_{st}$}}
\end{pspicture}
\begin{pspicture}(-10,-13)(30,11)
\pscircle[linewidth=0.2](0,0){7.5}
\pscircle[linewidth=0.2,border=0.8](10,0){7.5}
\psarc[linewidth=0.2,arrowsize=2,border=0.8]{->}(0,0){7.5}{-90}{0}
\psarc[linewidth=0.2,arrowsize=2]{->}(10,0){7.5}{-90}{0}
\psarc[linewidth=0.2,arrowsize=2]{<-}(5,-7.5){5}{180}{0}
\rput(10,8.5){$a$}
\rput(0,8.5){$c$}
\rput(5,-11){$b$}
\rput(8,-5.5){$a'$}
\rput(2,-5.5){$c'$}
\rput(25,-5){\psframebox{$\Cuff_{H}$}}
\end{pspicture}
\psset{unit=1mm}
\captionof{figure}{Qualgebra colorings for the diagrams of standard and Hopf cuff graphs} \label{pic:cuffs}
\end{center}
\end{example}

\subsection*{Classification of squandles of size $4$}

Let us now turn to non-trivial $4$-element squandle structures. We shall see that $3$ out of the $4$ of them are induced from the qualgebra structures from Proposition~\ref{P:Qualgebras4} according to the procedure described in Example~\ref{EX:QualgebraAsSquandle}.

\begin{proposition}\label{P:Squandles4}
Any non-trivial squandle with $4$ elements is isomorphic
\begin{itemize}
\item either to the sub-squandle~$S^2_3$ of the group squandle of the symmetric group~$S_3$ consisting of the identity and the transpositions $(12)$, $(23)$ and $(13)$;
\item or to the set $\P=\{p,q,r,s\}$ with the following operations (here~$x$ and~$y$ are arbitrary elements of~$\P$, and~$\rrho$ is the involution defined by~\eqref{E:tau}):
\begin{align*}
&x \lhd r = (x)\rrho, & &x \lhd y = x \text{  if } y \neq r;&\\
&r^2 = s^2 = s, &  &q^2 \in \{p,q,s\}, & p^2 = (q^2)\rrho.
\end{align*}
\end{itemize}
\end{proposition}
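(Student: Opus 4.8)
The plan is to adapt the proof of Proposition~\ref{P:Qualgebras4} to the squandle setting, isolating the one step where squandles genuinely behave differently from qualgebras. First I would establish, exactly as in the qualgebra case, that $\#\FP(x) \ge 2$ for every $x \in \Q$: by Lemma~\ref{L:FPandStab} the generated sub-squandle $\Q_x$ is contained in $\FP(x)$, and if $\Q_x = \{x\}$ then $\S(\Q_x) = \{\S_x\}$ would be a one-element sub-squandle of the group squandle $\Aut(\Q)$; its closure under squaring gives $\S_x^2 = \S_x$, whence $\S_x = \Id$ and $\#\FP(x) = 4$, a contradiction. (This is the squandle analogue of the multiplication-closure argument of Proposition~\ref{P:Qualgebras4}.) Since each $\S_x$ fixes at least two points, it is either a transposition or the identity.

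The decisive departure from the qualgebra proof is that Lemma~\ref{L:Sa'} --- the image $\S(\Q)$ being a subgroup --- holds only for qualgebras. What remains true is that $\S(\Q)$ is a sub-squandle of the group squandle $\Aut(\Q)$ sitting inside $\{\Id\} \cup \{\text{transpositions}\}$; since the square of any transposition is $\Id$, closure under squaring forces $\Id \in \S(\Q)$, and closure under the conjugation operation $\S_a \lhd \S_b = \S_b^{-1}\S_a\S_b$ makes $\S(\Q)$ stable under conjugating its transpositions by one another. I would then enumerate the conjugation-closed subsets of at most four elements inside the symmetric group $S_4$ on $\Q$: beyond $\{\Id\}$ (trivial) and $\{\Id, t\}$ (a single transposition), the only options are two disjoint transpositions $\{\Id, t_1, t_2\}$ and the three transpositions of an $S_3$ on a three-element subset $\{\Id, t_1, t_2, t_3\}$, because two non-commuting transpositions already produce a third by conjugation and a fourth would violate $\#\S(\Q) \le 4$.

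I expect the main obstacle to be ruling out the disjoint-transposition case, which is precisely the configuration that cannot occur for qualgebras. Writing $\Q = \{p,q,r,s\}$ with $t_1 = (pq)$ and $t_2 = (rs)$, idempotence forces the element with translation $t_1$ into $\{r,s\}$ and the element with translation $t_2$ into $\{p,q\}$; evaluating $\S_{a \lhd b} = \S_b^{-1}\S_a\S_b$ for these elements (the conjugation being trivial because disjoint transpositions commute) then shows that both of $r,s$ map to $t_1$ and both of $p,q$ map to $t_2$, so $\Id \notin \S(\Q)$ --- contradicting the squaring-closure just proved. For the remaining four-element case $\S(\Q) = \{\Id, t_1, t_2, t_3\}$, the map $\S$ is a bijection and hence a squandle isomorphism onto its image, and that image is the identity-plus-transpositions of an $S_3$, which is exactly $S^2_3$; this case also supplies the counter-example to Lemma~\ref{L:Sa'} promised in the text, since the transpositions of $S_3$ are not closed under multiplication.

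Finally, in the single-transposition case the operation $\lhd$ is pinned down exactly as in Proposition~\ref{P:Qualgebras4}: normalizing $\S_r = \rrho$ (so that, with the notation of~\eqref{E:tau}, $\rrho$ swaps $p,q$ and fixes $r,s$), one checks $\S^{-1}(\rrho) = \{r\}$ and $\S_y = \Id$ for $y \neq r$, giving $x \lhd r = (x)\rrho$ and $x \lhd y = x$ for $y \neq r$. The squaring is then governed by the two squandle axioms read through $\S$: Axiom~\eqref{E:SQA1}, namely $\S_{b^2} = \S_b^2$, places each of $r^2, s^2, p^2, q^2$ in $\S^{-1}(\Id) = \{p,q,s\}$, while Axiom~\eqref{E:SQA2} forces $\rrho = \S_r$ to respect squaring, making the $\rrho$-fixed squares $r^2$ and $s^2$ equal to $s$ and imposing $p^2 = (q^2)\rrho$; this leaves only $q^2 \in \{p,q,s\}$ free, reproducing the three listed structures. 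To close, I would observe that these three are genuine squandles, each induced via $a^2 = a \op a$ from a qualgebra of Proposition~\ref{P:Qualgebras4} (Example~\ref{EX:QualgebraAsSquandle}), whereas $S^2_3$ is a group squandle; pairwise non-isomorphism follows by comparing the cardinalities of the images $\S(\Q)$ together with the fixed-point bookkeeping used to conclude Proposition~\ref{P:Qualgebras4}.
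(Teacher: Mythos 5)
Your proposal is correct, and its skeleton is the same as the paper's: reduce via Lemmas~\ref{L:Sa} and~\ref{L:FPandStab} to the statement that every right translation is a transposition or the identity (with multiplication-closure replaced by squaring-closure, exactly as you do), then classify according to the possible images $\S(\P)$, the four-element image giving $S^2_3$ and the single-transposition image giving the three $\P$-type structures. The one step where you genuinely diverge is the exclusion of the disjoint-transposition case. The paper argues there that $r^2$ must lie in $\FP(r)=\{r,s\}$ while $\S_{r^2}=\S_r^2=\Id$ forces $r^2=s$, hence $\S_s=\Id$, and then exhibits an explicit violation of self-distributivity~\eqref{E:SD}, namely $(q\lhd r)\lhd p = p$ whereas $(q\lhd p)\lhd(r\lhd p)=q$. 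You instead use conjugation-equivariance $\S_{a\lhd b}=\S_b^{-1}\S_a\S_b$ (trivial here, since disjoint transpositions commute) to show that all four translations are transpositions, so $\Id\notin\S(\P)$, contradicting the closure of $\S(\P)$ under squaring. Both arguments are sound; yours is arguably the more conceptual one, since it isolates squaring-closure as the single group-like property that survives for squandles, where the paper instead produces an ad hoc failure of~\eqref{E:SD}. Your remaining glosses (``one checks $\S^{-1}(\rrho)=\{r\}$'', non-isomorphism ``as in Proposition~\ref{P:Qualgebras4}'') are at the same level of detail as the paper itself, and your closing observation that the three $\P$-type squandles are induced, via Example~\ref{EX:QualgebraAsSquandle}, from the qualgebras of Proposition~\ref{P:Qualgebras4} neatly discharges the converse verification that they satisfy~\eqref{E:SQA1}--\eqref{E:SQA2}.
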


\begin{proof}
Fix a squandle structure on~$\P$. 
Repeating verbatim the beginning of the proof of Proposition~\ref{P:Qualgebras4}, one shows that, for any $x \in \P$, $\S_x$ is a transposition or the identity. Forgetting trivial squandles, which correspond to $\S(\P) = \{\Id\}$, consider three remaining cases.
\begin{enumerate}
\item There are two intersecting transpositions --- say,  $(p,q)$ and $(q,r)$ --- in $\S(\P)$. Then $\S(\P)$ also contains $(p,q) \lhd (q,r) = (q,r)(p,q)(q,r) = (p,r)$ and $(p,q)^2 = \Id$. Since~$\P$ itself has only $4$ elements, this implies that~$\S$ is an injection, so, as a squandle, $\P$ is isomorphic to the sub-squandle of~$S_4$ formed by $\Id$, $(12)$, $(23)$ and $(13)$ (which is indeed a sub-squandle since stable by conjugation and squaring). Omitting the element $4$, one sees that the latter sub-squandle of~$S_4$ is isomorphic to the sub-squandle~$S^2_3$ of~$S_3$.
\item There are two non-intersecting transpositions --- say,  $(p,q)$ and $(r,s)$ --- in $\S(\P)$. A fixed point argument shows that  $(p,q) \in \{\S_r, \S_s\}$ and $(r,s)\in \{\S_p, \S_q\}$. Suppose for instance that $\S_r = (p,q)$ and $\S_p = (r,s)$. Consider now the possible values of~$r^2$. According to Lemma~\ref{L:FPandStab}, one has $r^2 \in \FP(r) = \{r,s\}$. On the other hand, $\S_{r^2} = (\S_r)^2 = \Id$, thus $r^2 \neq r$, leaving only the possibility $r^2 = s$. Thus, $\S_s = (\S_r)^2 = \Id$. But this leads to a contradiction with~\eqref{E:SD}:
$(q \lhd r) \lhd p = p \lhd p = p$, but $(q \lhd p) \lhd (r \lhd p) = q \lhd s = q$. Hence this case  does not lead to squandle structures.
\item The only remaining situation is $\S(\P) = \{\Id,\rrho\}$ with, say, $\S_r = \rrho$. Repeating once again an argument from the proof of Proposition~\ref{P:Qualgebras4}, one concludes that operation~$\lhd$ is defined by $\S_r = \rrho$ and $\S_x = \Id$ for $x \neq r$. In Proposition~\ref{P:Qualgebras4}, this operation was shown to satisfy \eqref{E:SD}-\eqref{E:Idem}. Thus
only specific squandle axioms \eqref{E:SQA1}-\eqref{E:SQA2} remain to be checked. First, \eqref{E:SQA1} translates as $\S_{b^2} = \S_b^2$, which here means that $x^2 \in \{p,q,s\}$ for all $x \in \P$. Next, \eqref{E:SQA2} is equivalent to all maps from $\S(\P)$ respecting the operation~$\sq$, which here translates as $(a^2)\rrho = ((a)\rrho)^2$. This means that $p^2 = (q^2)\rrho$, and that $r^2$ and $s^2$ are both $\rrho$-stable, so, lying in $\{p,q,s\}$, they can equal only~$s$. One thus gets the description of~$\sq$ given in the statement.
\end{enumerate}
The four structures obtained are shown to be mutually non-isomorphic in the same way as it was done for qualgebras in Proposition~\ref{P:Qualgebras4}.
\end{proof}

Note that the first structure from the proposition is an example of a sub-squandle of a group squandle (here of~$S_3$) which is not a subgroup, showing that Lemma~\ref{L:Sa'} does not hold for squandles.

\section{Qualgebra $2$-cocycles and weight invariants of graphs}\label{sec:QualgebraHom}

We now return to the general settings of a qualgebra $(\Q,\lhd,\op)$ and $\Q$-colorings of well-oriented knotted $3$-valent graph diagrams, according to coloring rules from Figure~\ref{pic:Colorings}\rcircled{A}\&\rcircled{B}. The aim of this section is to extract weight invariants out of such colorings.

\subsection*{Qualgebra $2$-cocycles as Boltzmann weight functions}

Recall the type of weight functions used for quandle colorings of knot diagrams (Example~\ref{EX:QuandleCol2}): starting with a map~$\w:\Q\times\Q\rightarrow\ZZ$, we applied it to the colors of two arcs around crossing points, the arcs being chosen according to Figure~\ref{pic:Quandles}. Note that the colors of these two arcs determine all other colors around a crossing point. Trying to treat $3$-valent vertices in a similar way, take a map~$\wl:\Q\times\Q\rightarrow\ZZ$, and let $(\w,\wl)$ be a weight function defined on crossing points as above, and on $3$-valent vertices according to Figure~\ref{pic:BoltzmannGraph}. Note that, like for crossing points, we take into consideration the colors of the arcs which determine all other colors around a $3$-valent vertex. Remark also that unrelated maps~$\wl$ and~$\wlinv$ could be chosen for unzip and zip vertices; our choice simplifies further calculations, however conserving abundant examples. To make our notations easier to follow, for denoting the components of our weight function we chose Greek letters with a shape referring to that of corresponding special points.

\begin{center}
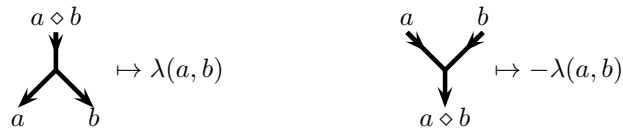

\begin{pspicture}(50,13)
\psline[linewidth=0.6,arrowsize=2]{->}(5,10)(5,7)
\psline[linewidth=0.6,arrowsize=2]{->}(5,10)(5,5)(0,0)
\psline[linewidth=0.6,arrowsize=2]{->}(5,5)(10,0)
\rput[b](5,11){$a \op b$}
\rput[t](0,0){$\bw a \bw$}
\rput[t](10,0){$b$}
\rput(20,5){$\mapsto \wl(a,b)$}
\end{pspicture}
\begin{pspicture}(15,13)
\psline[linewidth=0.6,arrowsize=2]{->}(0,10)(2.5,7.5)
\psline[linewidth=0.6,arrowsize=2]{->}(0,10)(5,5)(5,0)
\psline[linewidth=0.6,arrowsize=2]{->}(10,10)(7.5,7.5)
\psline[linewidth=0.6](10,10)(5,5)
\rput[b](0,11){$a$}
\rput[b](10,11){$b$}
\rput[t](5,0){$a \op b$}
\rput(20,5){$\mapsto -\wl(a,b)$}
\end{pspicture}
\captionof{figure}{Weight function for qualgebra-colored graph diagrams}\label{pic:BoltzmannGraph}
\end{center}

\begin{proposition}\label{P:BoltzmannGraph}
Take a qualgebra $(\Q,\lhd,\op)$ and two maps $\w,\wl:\Q\times\Q\rightarrow\ZZ$. The weight function $(\w,\wl)$ described above (and depicted on Figures~\ref{pic:Quandles} and~\ref{pic:BoltzmannGraph}) is Boltzmann if and only if it satisfies, for all elements of~$\Q$, Axioms \eqref{E:BWR3}-\eqref{E:BWR1} together with three additional ones: 
\begin{align}
&\w(a,b \op c) = \w(a,b) + \w(a\lhd b,c), \label{E:BWRIV}\\
&\w(a\op b, c) + \wl(a \lhd c, b \lhd c) = \w(a,c) +\w(b,c) + \wl(a,b), \label{E:BWRVI}\\ 
&\w(a,b) + \wl(a,b) = \wl(b,a \lhd b). \label{E:BWRV}
\end{align}
\end{proposition}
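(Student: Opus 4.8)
The plan is to verify Definition~\ref{D:BW} move by move, reducing the workload via Lemma~\ref{L:RMovesForWellOrGraphs}: it suffices to check that the $\ww$-weights of the two correspondingly colored sub-diagrams coincide for all oriented versions of R$\mathrm{I}$--R$\mathrm{III}$ and for the six generating moves R$\mathrm{IV}^{z}$, R$\mathrm{IV}^{u}$, R$\mathrm{V}^{z}$, R$\mathrm{V}^{u}$, R$\mathrm{VI}^{z}$, R$\mathrm{VI}^{u}$ from Figure~\ref{pic:RMovesForWellOrGraphs}. Indeed, any remaining well-oriented move is realized as a composition of these, along which the local weight changes add up, so a zero change on each generator forces a zero change on every move. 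Conversely, specializing the Boltzmann condition to a single generator reads off the corresponding axiom; thus each computation establishes both implications of the biconditional at once.

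Moves R$\mathrm{I}$--R$\mathrm{III}$ involve crossing points only, so their analysis is identical to the knot case: by Example~\ref{EX:QuandleCol2} their weight balance is governed precisely by \eqref{E:BWR3} (for R$\mathrm{III}$) and \eqref{E:BWR1} (for R$\mathrm{I}$), with R$\mathrm{II}$ automatic. It remains to treat the three graph moves. For each I will read off, from Figures~\ref{pic:Quandles} and~\ref{pic:BoltzmannGraph}, the weight of every special point on the two sides, using the sign conventions that a crossing contributes $\pm\w$ applied to its two determining colors, and that a trivalent vertex contributes $\wl(a,b)$ (unzip) or $-\wl(a,b)$ (zip) on the colors $a,b$ of its two co-oriented arcs.

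For R$\mathrm{IV}^{z}$ (Figure~\ref{pic:Qualgebras}) the left side carries one crossing and one zip vertex, of weight $\w(a,b\op c)-\wl(b,c)$, while the right side carries two crossings and the same zip vertex, of weight $\w(a,b)+\w(a\lhd b,c)-\wl(b,c)$; the vertex terms cancel and equality becomes exactly \eqref{E:BWRIV}. For R$\mathrm{V}^{z}$ the left side is a crossing followed by a zip vertex on the colors $b$ and $a\lhd b$, weighing $\w(a,b)-\wl(b,a\lhd b)$, whereas the right side is a single zip vertex on $a,b$, weighing $-\wl(a,b)$; equating them gives \eqref{E:BWRV}. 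For R$\mathrm{VI}^{z}$ the left side is a zip vertex on $a,b$ followed by a crossing with $c$, of total weight $-\wl(a,b)+\w(a\op b,c)$, while the right side has the two crossings of $a$ and $b$ with $c$ followed by a zip vertex on $a\lhd c,b\lhd c$, of weight $\w(a,c)+\w(b,c)-\wl(a\lhd c,b\lhd c)$; rearranging yields \eqref{E:BWRVI}.

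Finally, the unzip generators R$\mathrm{IV}^{u}$, R$\mathrm{V}^{u}$, R$\mathrm{VI}^{u}$ are handled in the same fashion, except that the determining colors now propagate through $\wlhd$; the resulting identities are the $\wlhd$-analogues of \eqref{E:BWRIV}--\eqref{E:BWRV} and, exactly as the coloring axioms \eqref{E:QA1'}, \eqref{E:QA2'}, \eqref{E:QAComm'} reduce to \eqref{E:QA1}, \eqref{E:QA2}, \eqref{E:QAComm} in the presence of \eqref{E:Inv} (Lemma~\ref{L:QualgebraProperties}), they are equivalent to \eqref{E:BWRIV}--\eqref{E:BWRV} once the already-established conditions are used. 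I expect the only real difficulty to be bookkeeping: fixing the over/under strand and the zip/unzip type in each figure so that the signs of $\w$ and $\wl$ come out consistently, and confirming that the unzip versions produce no condition beyond the three stated. The clean cancellation of the shared vertex weight in R$\mathrm{IV}$ is the structural reason \eqref{E:BWRIV} constrains only $\w$.
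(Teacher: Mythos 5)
Your proposal is correct and follows essentially the same route as the paper's proof: moves R$\mathrm{I}$--R$\mathrm{III}$ are delegated to the quandle case (Example~\ref{EX:QuandleCol2}), the zip versions of R$\mathrm{IV}$--R$\mathrm{VI}$ are verified by exactly the weight computations the paper records in Figure~\ref{pic:AxiomsQu2Cocycles} (yielding \eqref{E:BWRIV}, \eqref{E:BWRV} and \eqref{E:BWRVI} respectively), and the unzip versions are reduced to these via the $\wlhd$-relations of Lemma~\ref{L:QualgebraProperties}. Your explicit appeal to Lemma~\ref{L:RMovesForWellOrGraphs}, with the observation that weight changes compose additively along a sequence of generating moves, only makes explicit a reduction the paper uses implicitly.
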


\begin{proof}
One should check when each of the six R-moves, combined with the induced coloring transformation from Definition~\ref{D:TopColRules}, leaves the $(\w,\wl)$-weights unchanged. For moves R$\mathrm{I}$-R$\mathrm{III}$, this is known to be equivalent to Axioms \eqref{E:BWR3}-\eqref{E:BWR1} for~$\w$ (cf. Example~\ref{EX:QuandleCol2}). Figure~\ref{pic:AxiomsQu2Cocycles} deals with the zip versions of moves R$\mathrm{IV}$-R$\mathrm{VI}$, the unzip versions being similar due to our choice of weight function around zip and unzip vertices, and to Relations \eqref{E:QA1'}-\eqref{E:QAComm'} allowing to treat operation~$\wlhd$ in a manner analogous to~$\lhd$. In the figure, move R$\mathrm{IV}^z$ (respectively, R$\mathrm{VI}^z$ or R$\mathrm{V}^z$) is shown to preserve weights if and only if~\eqref{E:BWRIV} (respectively, \eqref{E:BWRVI} or~\eqref{E:BWRV}) is satisfied.

\begin{center}
\psset{unit=2mm}
\begin{pspicture}(-12.5,-3.5)(18,13.5)
\pscurve[linewidth=0.2,arrowsize=1.2]{->}(-3,11.5)(-1.5,1.5)(3,-1.5)
\psline[linewidth=0.2,arrowsize=1.2]{->}(3,11.5)(2.5,9)
\pscurve[linewidth=0.2](2.5,9)(2.3,8)(1.5,6)(0.5,5)
\psline[linewidth=0.2,arrowsize=1.2]{->}(0,12.5)(-0.5,10)
\pscurve[linewidth=0.2](-0.5,10)(-0.6,9.5)(-0.4,7)(0.5,5)
\psline[linewidth=0.2,border=0.6,arrowsize=1.2]{->}(0.5,5)(-3,-1.5)
\rput(-3,12.5){$a$}
\rput(0,13.5){$b$}
\rput(3,12.5){$c$}
\rput(-4,-2.5){$b \op c$}
\rput(1.5,3){$b \op c$}
\rput(5.3,-2.7){$a \lhd (b \op c)$}
\psline[linewidth=0.05]{|->}(0,5)(-6,5)
\rput(-11,5){\psframebox{$-\wl(b, c)$}}
\psline[linewidth=0.05]{|->}(-2.5,1.5)(-6,1.5)
\rput(-11,1.5){\psframebox{$\w(a,b \op c)$}}
\rput(14,5){$\overset{\text{R}\mathrm{IV}^z}{\longleftrightarrow}$}
\end{pspicture}
\begin{pspicture}(-7.5,-3.5)(25,13.5)
\pscurve[linewidth=0.2,arrowsize=1.2]{->}(-3,11.5)(1,6)(2,4)(3,-1.5)
\psline[linewidth=0.2,arrowsize=1.2]{->}(3,11.5)(2.8,8.5)
\pscurve[linewidth=0.2,border=0.6](2.8,8.5)(2,4)(-1.5,1.5)
\psline[linewidth=0.2,arrowsize=1.2]{->}(0,12.5)(-0.5,10)
\pscurve[linewidth=0.2,border=0.6](-0.5,10)(-1.2,6)(-1.5,1.5)
\pscurve[linewidth=0.2](2.8,8.5)(2,4)(-1.5,1.5)
\psline[linewidth=0.2,arrowsize=1.2]{->}(-1.5,1.5)(-3,-1.5)
\rput(-3,12.5){$a$}
\rput(0,13.5){$b$}
\rput(3,12.5){$c$}
\rput(-4,-2.5){$b \op c$}
\rput(5.3,-2.7){$(a \lhd b) \lhd c$}
\rput{-55}(0.5,5.5){$a \lhd b$}
\psline[linewidth=0.05]{|->}(-1,1.1)(6,1.1)
\rput(11,0.9){\psframebox{$-\wl(b, c)$}}
\psline[linewidth=0.05]{|->}(0,8.2)(6,8.2)
\rput(11,8.2){\psframebox{$\w(a,b)$}}
\psline[linewidth=0.05]{|->}(3,4)(6,4)
\rput(11,4.1){\psframebox{$\w(a \lhd b,c)$}}
\end{pspicture}

\medskip
\begin{pspicture}(-14.5,-4.5)(17,11.5)
\pscurve[linewidth=0.2,arrowsize=1.2]{->}(-3,9.5)(2,5)(0,2)
\pscurve[linewidth=0.2,border=0.6,arrowsize=1.2]{->}(3,9.5)(-2,5)(0,2)
\psline[linewidth=0.2,arrowsize=1.2]{->}(1,3)(0,2)(0,-2.5)
\rput(-3,10.5){$a$}
\rput(0,-3.5){$b \op (a \lhd b)$}
\rput(3,10.5){$b$}
\rput(4.5,4.5){$a \lhd b$}
\psline[linewidth=0.05]{|->}(-1,2)(-5,2)
\rput(-11,2){\psframebox{$-\wl(b, a \lhd b)$}}
\psline[linewidth=0.05]{|->}(-1,7.5)(-6,7.5)
\rput(-11,7.5){\psframebox{$\w(a,b)$}}
\rput(13,5){$\overset{\text{R}\mathrm{V}^z}{\longleftrightarrow}$}
\end{pspicture}
\begin{pspicture}(-7.5,-4.5)(25,11.5)
\psline[linewidth=0.2,arrowsize=1.2]{->}(0,2)(0,-2.5)
\pscurve[linewidth=0.2,arrowsize=1.2]{->}(-3,9.5)(-2,5)(0,2)
\pscurve[linewidth=0.2,arrowsize=1.2]{->}(3,9.5)(2,5)(0,2)
\rput(-3,10.5){$a$}
\rput(0,-3.5){$a \op b$}
\rput(3,10.5){$b$}
\psline[linewidth=0.05]{|->}(1,2)(5,2)
\rput(11,2){\psframebox{$-\wl(a,b)$}}
\end{pspicture}

\medskip
\begin{pspicture}(-12.5,-3.5)(18,14.5)
\psline[linewidth=0.2,arrowsize=1.2]{->}(-3,11.5)(-2.5,9)
\pscurve[linewidth=0.2](-2.5,9)(-2.3,8)(-1.5,6)(-0.5,5)
\psline[linewidth=0.2,arrowsize=1.2]{->}(0,12.5)(0.5,10)
\pscurve[linewidth=0.2](0.5,10)(0.6,9.5)(0.4,7)(-0.5,5)
\psline[linewidth=0.2,arrowsize=1.2]{->}(-0.5,5)(3,-1.5)
\pscurve[linewidth=0.2,border=0.6,arrowsize=1.5]{->}(3,11.5)(1.5,1.5)(-3,-1.5)
\rput(-3,12.5){$a$}
\rput(0,13.5){$b$}
\rput(3,12.5){$c$}
\rput(-4,-2.5){$c$}
\rput(-1.5,3){$a \op b$}
\rput(5.3,-2.7){$(a \op b) \lhd c$}
\psline[linewidth=0.05]{|->}(-2.5,5)(-6,5)
\rput(-11,5){\psframebox{$-\wl(a,b)$}}
\psline[linewidth=0.05]{|->}(0,1.5)(-6,1.5)
\rput(-11,1.5){\psframebox{$\w(a \op b, c)$}}
\rput(14,5){$\overset{\text{R}\mathrm{VI}^z}{\longleftrightarrow}$}
\end{pspicture}
\begin{pspicture}(-7.5,-3.5)(25,14.5)
\psline[linewidth=0.2,arrowsize=1.2]{->}(-3,11.5)(-2.8,8.5)
\pscurve[linewidth=0.2](-2.8,8.5)(-2,4)(1.5,1.5)
\psline[linewidth=0.2,arrowsize=1.2]{->}(0,12.5)(0.5,10)
\pscurve[linewidth=0.2](0.5,10)(1.2,6)(1.5,1.5)
\psline[linewidth=0.2,arrowsize=1.2]{->}(1.5,1.5)(3,-1.5)
\pscurve[linewidth=0.2,border=0.6,arrowsize=1.2]{->}(3,11.5)(-1,6)(-2,4)(-3,-1.5)
\rput(-3,12.5){$a$}
\rput(0,13.5){$b$}
\rput(3,12.5){$c$}
\rput(-4,-2.5){$c$}
\rput(5.3,-2.7){$(a \lhd c) \op (b \lhd c)$}
\rput(3.5,6){$b \lhd c$}
\rput{-30}(-0.3,1.5){$a \lhd c$}
\psline[linewidth=0.05]{|->}(2.5,1.1)(8,1.1)
\rput(16,0.9){\psframebox{$-\wl(a \lhd c, b \lhd c)$}}
\psline[linewidth=0.05]{|->}(1.5,8.2)(11,8.2)
\rput(16,8.2){\psframebox{$\w(b,c)$}}
\psline[linewidth=0.05]{|->}(-1,4)(11,4)
\rput(16,4.1){\psframebox{$\w(a,c)$}}
\end{pspicture}
\psset{unit=1mm}

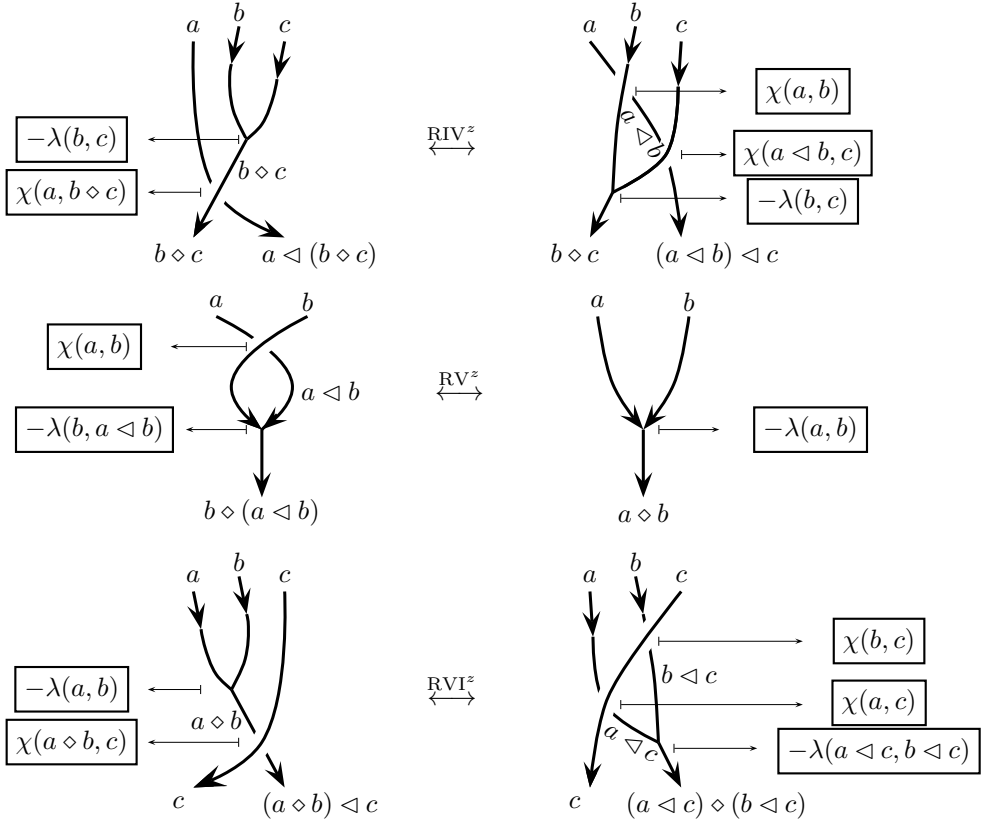
\captionof{figure}{Obtaining axioms for qualgebra $2$-cocycles}\label{pic:AxiomsQu2Cocycles}
\end{center}
\end{proof}

\begin{definition}\label{D:Qualgebra2Coc}
For a qualgebra~$\Q$, a pair of maps $(\w,\wl)$ satisfying the five axioms above is called a ($\ZZ$-valued) \emph{qualgebra $2$-cocycle} of~$\Q$; the term will be commented on below. The set of all qualgebra $2$-cocycles of~$\Q$ is denoted by $Z^2(\Q)$.
\end{definition}

Lemma~\ref{L:WeightedInvar} now allows us to construct weight qualgebra coloring invariants for graphs.
 
\begin{corollary}\label{L:QualgebraWeightInvar}
Take a qualgebra $(\Q,\lhd,\op)$ and a qualgebra $2$-cocycle $(\w,\wl)$. Consider $\Q$-coloring rules from Figure~\ref{pic:Colorings}\rcircled{A}\&\rcircled{B} and the weight function~$(\w,\wl)$ from Figures~\ref{pic:Quandles} and~\ref{pic:BoltzmannGraph}. Then the multi-set $\{\BW_{(\w,\wl)}(\D,\C) | \C \in \Col_{\Q}(\D)\}$ does not depend on the choice of a diagram~$\D$ representing a well-oriented $3$-valent knotted graph~$\Gamma$.
\end{corollary}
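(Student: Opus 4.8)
The plan is to assemble this statement directly from the three pieces of machinery already in place: the abstract weight-invariance Lemma~\ref{L:WeightedInvar}, the characterization of topological coloring rules in Proposition~\ref{P:QualgebraColor}, and the Boltzmann criterion of Proposition~\ref{P:BoltzmannGraph}. First I would invoke Proposition~\ref{P:QualgebraColor}: since $(\Q,\lhd,\op)$ is a qualgebra by hypothesis, the $\Q$-coloring rules from Figure~\ref{pic:Colorings}\rcircled{A}\&\rcircled{B} are topological in the sense of Definition~\ref{D:TopColRules}. Next I would invoke Proposition~\ref{P:BoltzmannGraph}: because $(\w,\wl)$ is a qualgebra $2$-cocycle, i.e.\ satisfies the five axioms of Definition~\ref{D:Qualgebra2Coc}, the weight function $(\w,\wl)$ depicted on Figures~\ref{pic:Quandles} and~\ref{pic:BoltzmannGraph} is a Boltzmann weight function. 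With both hypotheses of Lemma~\ref{L:WeightedInvar} verified, that lemma yields the equality of the multi-sets of $(\w,\wl)$-weights for any two R-equivalent diagrams.

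It then remains to translate R-equivalence into graph isotopy. Here I would appeal, exactly as in the proof of Corollary~\ref{L:QualgebraCountInvar}, to the theorem of Kauffman, Yamada and Yetter \cite{KauffmanGraphs,YamadaGraphs,YetterGraphs}: R-equivalence classes of (well-oriented) graph diagrams correspond precisely to isotopy classes of the represented knotted $3$-valent graphs. Consequently, if $\D$ and $\D'$ are two diagrams of the same well-oriented graph $\Gamma$, they are R-equivalent, so Lemma~\ref{L:WeightedInvar} forces the two weight multi-sets to coincide; the multi-set $\{\BW_{(\w,\wl)}(\D,\C) \mid \C \in \Col_{\Q}(\D)\}$ therefore depends only on the isotopy class of $\Gamma$, as claimed.

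The one point that deserves care, rather than being a genuine obstacle, is that Proposition~\ref{P:BoltzmannGraph} only verifies weight-preservation for the specific generating set of R-moves listed in Lemma~\ref{L:RMovesForWellOrGraphs} (the moves $\mathrm{I}$--$\mathrm{III}$ and the distinguished orientations of $\mathrm{IV}$--$\mathrm{VI}$), whereas the definition of a Boltzmann weight function demands preservation under \emph{every} well-oriented R-move. I would dispose of this exactly as the topological property was handled in Proposition~\ref{P:QualgebraColor}: by Lemma~\ref{L:RMovesForWellOrGraphs} every remaining well-oriented version of $\mathrm{IV}$--$\mathrm{VI}$ is realized as a finite composition of the generating moves, and since $\w$-weights are additive over special points, a move that decomposes into weight-preserving moves is itself weight-preserving. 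Thus the Boltzmann property on the generating set propagates to all R-moves, and the argument above goes through without change.
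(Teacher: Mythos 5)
Your proof is correct and follows exactly the paper's own argument: Proposition~\ref{P:QualgebraColor} for topologicity of the coloring rules, Proposition~\ref{P:BoltzmannGraph} for the Boltzmann property, Lemma~\ref{L:WeightedInvar} for invariance of the weight multi-set under R-equivalence, and the Kauffman--Yamada--Yetter correspondence between R-equivalence classes and isotopy classes of graphs. Your closing paragraph, propagating weight-preservation from the generating moves of Lemma~\ref{L:RMovesForWellOrGraphs} to all well-oriented moves via additivity of weights, is a valid piece of extra care, but it concerns the internal completeness of the proof of Proposition~\ref{P:BoltzmannGraph} (whose statement already asserts full Boltzmann-ness, and whose proof handles the remaining orientations via the relations of Lemma~\ref{L:QualgebraProperties}) rather than a step required by the corollary itself.
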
 

\begin{proof}
Proposition~\ref{P:QualgebraColor} guarantees that the above coloring rules are topological,
and Proposition~\ref{P:BoltzmannGraph} tells that the above weight function is Boltzmann.
Lemma~\ref{L:WeightedInvar} then asserts that the multi-set in question is well-defined on R-equivalence classes of diagrams, which correspond to isotopy classes of graphs.
\end{proof}

One thus gets a systematic way of producing invariants of well-oriented (or unoriented, cf. Proposition~\ref{P:UnVsWellOrient}) graphs, which sharpen the counting invariants from Corollary~\ref{L:QualgebraCountInvar}.

\subsection*{More on qualgebra $2$-cocycles: properties and examples}

Start with an easy observation concerning the structure of $Z^2(\Q)$:

\begin{lemma}\label{L:QualgebraCocGroup}
The space $Z^2(\Q)$ of qualgebra $2$-cocycles of a qualgebra~$\Q$ is an Abelian group under the point-wise addition of the two components; in other words, the sum $(\w,\wl)$ of $(\w',\wl')$ and $(\w'',\wl'')$ is defined by
\begin{align*}
\w(a,b)& = \w'(a,b) + \w''(a,b), & \wl(a,b)& = \wl'(a,b) + \wl''(a,b).
\end{align*} 
Moreover, for a fixed $\Q$-colored graph diagram~$(\D,\C)$, the following map is linear:
\begin{align*}
Z^2(\Q) & \longrightarrow \ZZ, \\
(\w,\wl) & \longmapsto \BW_{(\w,\wl)}(\D,\C).
\end{align*} 
\end{lemma}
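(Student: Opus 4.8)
The plan is to realize both assertions as immediate consequences of the single observation that every condition in sight is $\ZZ$-linear in the unknown maps $\w$ and $\wl$. First I would introduce the ambient object $A = \{(\w,\wl) \mid \w,\wl : \Q \times \Q \to \ZZ\}$, which is an Abelian group under the stated component-wise, point-wise addition, being just the direct product of two copies of the Abelian group $\ZZ^{\Q\times\Q}$. The claim that $Z^2(\Q)$ is an Abelian group then reduces to showing that $Z^2(\Q)$ is a \emph{subgroup} of $A$.

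To see this I would run through the five cocycle axioms --- \eqref{E:BWR3}, \eqref{E:BWR1}, \eqref{E:BWRIV}, \eqref{E:BWRVI}, \eqref{E:BWRV} --- and note that each is a homogeneous $\ZZ$-linear equation in the values $\w(\cdot,\cdot)$ and $\wl(\cdot,\cdot)$: every term is a single such value with coefficient $\pm 1$, and there is no constant term (in particular \eqref{E:BWR1} reads $\w(a,a)=0$). Consequently, for each fixed triple $(a,b,c)$, the corresponding axiom defines a group homomorphism $A \to \ZZ$, and $Z^2(\Q)$ is precisely the intersection of the kernels of all these homomorphisms. An intersection of subgroups is a subgroup, so $Z^2(\Q) \le A$ is an Abelian group, and the sum of two cocycles is again a cocycle, computed component-wise and point-wise exactly as stated.

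For the second assertion I would unfold Definition~\ref{D:BW}: for the weight function $(\w,\wl)$ of Figures~\ref{pic:Quandles} and~\ref{pic:BoltzmannGraph}, the integer $\BW_{(\w,\wl)}(\D,\C)$ is the finite sum, taken over all crossing points and all trivalent vertices of~$\D$, of the values assigned there, each of which is $\pm\w$ or $\pm\wl$ evaluated at certain colours. The crucial point is that since the coloring~$\C$ is held fixed, those colours --- and hence all the arguments fed into $\w$ and $\wl$ --- are fixed, independently of the cocycle. Thus $\BW_{(\w,\wl)}(\D,\C)$ is a fixed $\ZZ$-linear combination of the values of $\w$ and $\wl$, so replacing $(\w,\wl)$ by a component-wise sum $(\w'+\w'',\,\wl'+\wl'')$ adds the contributions term by term, giving $\BW_{(\w'+\w'',\wl'+\wl'')}(\D,\C) = \BW_{(\w',\wl')}(\D,\C) + \BW_{(\w'',\wl'')}(\D,\C)$, which is the desired linearity.

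The whole argument is routine; the only point requiring any care --- and the closest thing to an obstacle --- is checking that every cocycle axiom is genuinely homogeneous linear, so that $Z^2(\Q)$ is a subgroup rather than merely an affine subset. This one reads off directly from the displayed equations, since none of the five carries a nonzero constant term.
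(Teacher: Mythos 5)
Your proposal is correct and follows essentially the same route as the paper's proof, which simply invokes the linearity (homogeneity) of the five cocycle axioms \eqref{E:BWR3}--\eqref{E:BWR1} and \eqref{E:BWRIV}--\eqref{E:BWRV} for the subgroup claim, and the fact that $\BW_{(\w,\wl)}(\D,\C)$ is a sum of values of $\pm\w$ and $\pm\wl$ at colours fixed by~$\C$ for the linearity claim. Your write-up merely makes the paper's ``easy standard verification'' explicit, including the one point worth noting --- that the axioms have no constant terms, so $Z^2(\Q)$ is a subgroup rather than an affine subset.
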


\begin{proof}
An easy standard verification using, for the first assertion, the linearity of all qualgebra $2$-cocycle axioms, and, for the second assertion, the linearity  of our qualgebra coloring rules.
\end{proof}

We continue the generalities about qualgebra $2$-cocycles with a remark on their definition. 
Recall that in the definition of a qualgebra, the self-distributivity axiom turned out to be redundant; here, some axioms can be omitted as well:

\begin{lemma}\label{L:QualgebraCocAxioms}
Take a qualgebra~$\Q$ and two maps $\w,\wl:\Q\times\Q\rightarrow\ZZ$. Relation~\eqref{E:BWR3} for these maps follows from~\eqref{E:BWRVI} and~\eqref{E:BWRV}, and relation~\eqref{E:BWR1} is a consequence of~\eqref{E:BWRV}.
\end{lemma}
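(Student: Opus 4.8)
The plan is to treat the two claims separately, deriving \eqref{E:BWR1} from \eqref{E:BWRV} alone and \eqref{E:BWR3} from \eqref{E:BWRVI} together with \eqref{E:BWRV}; note that \eqref{E:BWRIV} plays no role in either derivation, in agreement with the statement.

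For \eqref{E:BWR1} I would simply set $b=a$ in \eqref{E:BWRV}. Idempotence \eqref{E:Idem} gives $a\lhd a = a$, so the right-hand side $\wl(a, a\lhd a)$ collapses to $\wl(a,a)$, which cancels the $\wl(a,a)$ appearing on the left, leaving $\w(a,a)=0$.

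The interesting part is \eqref{E:BWR3}. The key idea is to produce two different expressions for the term $\w(a\op b, c)$ and subtract them so that this $\op$-term, which does not appear in \eqref{E:BWR3}, is eliminated. First I would record \eqref{E:BWRVI} as written. Then I would reapply \eqref{E:BWRVI} after the substitution $(a,b)\mapsto(b, a\lhd b)$; the crucial point is that semi-commutativity \eqref{E:QAComm} identifies $b\op(a\lhd b)$ with $a\op b$, so both instances of \eqref{E:BWRVI} carry the same leading term $\w(a\op b, c)$. Subtracting the two relations cancels this term and leaves an identity involving only $\w$ and $\wl$.

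It then remains to clear all $\wl$ contributions. For this I would use \eqref{E:BWRV} in the rewritten form $\wl(x,y)=\wl(y, x\lhd y)-\w(x,y)$, applied both to $\wl(a,b)$ and to $\wl(a\lhd c, b\lhd c)$. Self-distributivity \eqref{E:SD}, read as $(a\lhd c)\lhd(b\lhd c)=(a\lhd b)\lhd c$, guarantees that the resulting $\wl$-terms of shape $\wl(b\lhd c, (a\lhd b)\lhd c)$ and $\wl(b, a\lhd b)$ match up and cancel exactly; what survives is precisely \eqref{E:BWR3}. The main obstacle is purely bookkeeping: arranging the substitution so that the semi-commutativity identification is transparent, and choosing the direction in which \eqref{E:BWRV} is applied so that the $\wl$-terms annihilate rather than multiply. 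Once the substitution $(a,b)\mapsto(b, a\lhd b)$ combined with \eqref{E:QAComm} is in place, the remainder is a short linear computation.
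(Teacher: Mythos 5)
Your proof is correct, and for \eqref{E:BWR1} it coincides with the paper's argument (set $b=a$ in \eqref{E:BWRV} and use idempotence). For \eqref{E:BWR3}, however, you carry out the route that the paper explicitly mentions but declines to follow: the paper notes that ``one can either use a direct computation, or a diagrammatic argument'' and opts for the latter, realizing move R$\mathrm{III}$ as a sequence of moves R$\mathrm{V}^u$ and R$\mathrm{VI}^u$ (Figure~\ref{pic:CocyclesOmittingRIII}), so that weight-preservation under those moves --- which is what \eqref{E:BWRVI} and \eqref{E:BWRV} encode via the proof of Proposition~\ref{P:BoltzmannGraph} --- forces weight-preservation under R$\mathrm{III}$, i.e.\ \eqref{E:BWR3}. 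Your computation is the algebraic shadow of that picture, and it does close up exactly as you describe: writing \eqref{E:BWRVI} for $(a,b,c)$ and for $(b,a\lhd b,c)$, semi-commutativity \eqref{E:QAComm} gives $b\op(a\lhd b)=a\op b$, so subtracting kills the terms $\w(a\op b,c)$ and leaves
\[
\wl(a\lhd c,b\lhd c)-\wl(b\lhd c,(a\lhd b)\lhd c)=\w(a,c)-\w(a\lhd b,c)+\wl(a,b)-\wl(b,a\lhd b);
\]
then \eqref{E:BWRV} applied to the pair $(a\lhd c,b\lhd c)$ --- with \eqref{E:SD} identifying $(a\lhd c)\lhd(b\lhd c)$ with $(a\lhd b)\lhd c$ --- evaluates the left-hand side to $-\w(a\lhd c,b\lhd c)$, while \eqref{E:BWRV} applied to $(a,b)$ evaluates $\wl(a,b)-\wl(b,a\lhd b)$ to $-\w(a,b)$, yielding precisely \eqref{E:BWR3}. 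What the paper's diagrammatic argument buys is conceptual clarity: it shows topologically that the two vertex moves generate R$\mathrm{III}$, so the lemma becomes an instance of the general Boltzmann-weight machinery rather than an ad hoc identity. What your computation buys is self-containedness: it needs no colored diagrams or appeal to the coloring bijections, it works verbatim with coefficients in an arbitrary Abelian group, and it makes explicit exactly which qualgebra axioms (\eqref{E:QAComm} and \eqref{E:SD}) are consumed in the cancellation.
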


\begin{proof}
Putting $b=a$ in~\eqref{E:BWRV} and using the idempotence of~$a$, one gets~\eqref{E:BWR1}.

To deduce~\eqref{E:BWR3} from~\eqref{E:BWRVI} and~\eqref{E:BWRV}, one can either use a direct computation, or a diagrammatic argument. We opt for the latter. Consider a sequence of moves R$\mathrm{VI}$ and R$\mathrm{V}$ from Figure~\ref{pic:CocyclesOmittingRIII}. Endow the first and the last diagrams from the figure with the unique colorings extending the partial ones indicated on the Figure, and the intermediate diagrams with the induced colorings (cf. Proposition~\ref{P:QualgebraColor}). Relations~\eqref{E:BWRVI} and~\eqref{E:BWRV} imply, according to the proof of Proposition~\ref{P:BoltzmannGraph}, that the $(\w,\wl)$-weights of all five diagrams coincide. But the $(\w,\wl)$-weights of the first and the last diagrams, decreased by $\wl(a,b)$, are precisely the $(\w,\wl)$-weights of the two sides of a move R$\mathrm{III}$ with colors $a,b,c$ on the top. Recalling that move R$\mathrm{III}$ preserves the $(\w,\wl)$-weights if and only if~\eqref{E:BWR3} holds (cf. Example~\ref{EX:QuandleCol2}), we finish the proof.

\begin{center}
\psset{xunit=1.2mm}
\begin{pspicture}(-4,-2.5)(20,16)
\pscurve[linewidth=0.2,arrowsize=1.2]{->}(-1.5,14)(-3,11.5)(1.5,5)(3,-1.5)
\pscurve[linewidth=0.2,border=0.5,arrowsize=1.2]{->}(-1.5,14)(0,11.5)(-1.5,5)(0,-1.5)
\pscurve[linewidth=0.2,border=0.5,arrowsize=1.2]{->}(3,17)(3,11.5)(1.5,5)(-3,-1.5)
\psline[linewidth=0.2,arrowsize=1.2]{->}(-1.5,17)(-1.5,14)
\psline[linewidth=0.2](-1.5,14)(-2.5,12.8)
\rput(-3.5,13){$a$}
\rput(0.5,13){$b$}
\rput(4,13){$c$}
\rput(13,8){$\overset{\text{R}\mathrm{V}^u}{\longleftrightarrow}$}
\end{pspicture}
\begin{pspicture}(-4,-2.5)(20,16)
\psline[linewidth=0.2,arrowsize=1.2]{->}(-1.5,17)(-1.5,14)
\pscurve[linewidth=0.2,arrowsize=1.2]{->}(-1.5,14)(0,11.5)(1.5,5)(3,-1.5)
\pscurve[linewidth=0.2,arrowsize=1.2]{->}(-1.5,14)(-3,11.5)(-1.5,5)(0,-1.5)
\pscurve[linewidth=0.2,border=0.5,arrowsize=1.2]{->}(3,17)(3,11.5)(1.5,5)(-3,-1.5)
\rput(13,8){$\overset{\text{R}\mathrm{VI}^u}{\longleftrightarrow}$}
\end{pspicture}
\begin{pspicture}(-4,-2.5)(20,16)
\psline[linewidth=0.2,arrowsize=1.2]{->}(-1.5,17)(1.5,6)
\pscurve[linewidth=0.2,arrowsize=1.2]{->}(1.5,6)(2.5,4)(3,-1.5)
\pscurve[linewidth=0.2,arrowsize=1.2]{->}(1.5,6)(0.5,4)(0,-1.5)
\pscurve[linewidth=0.2,border=0.5,arrowsize=1.2]{->}(3,17)(-1,12)(-3,-1.5)
\rput(13,8){$\overset{\text{R}\mathrm{V}^u}{\longleftrightarrow}$}
\end{pspicture}
\begin{pspicture}(-4,-2.5)(20,16)
\psline[linewidth=0.2,arrowsize=1.2]{->}(-1.5,17)(1.5,6)
\pscurve[linewidth=0.2,arrowsize=1.2]{->}(1.5,6)(0.5,4)(3,-1.5)
\pscurve[linewidth=0.2,border=0.5,arrowsize=1.2]{->}(1.5,6)(2.5,4)(0,-1.5)
\pscurve[linewidth=0.2,border=0.5,arrowsize=1.2]{->}(3,17)(-1,12)(-3,-1.5)
\psline[linewidth=0.2](1.5,6)(1.2,5.5)
\rput(13,8){$\overset{\text{R}\mathrm{VI}^u}{\longleftrightarrow}$}
\end{pspicture}
\begin{pspicture}(-4,-2.5)(6,16)
\psline[linewidth=0.2,arrowsize=1.2]{->}(-1.5,17)(-1.5,14)
\pscurve[linewidth=0.2,border=0.5,arrowsize=1.2]{->}(-1.5,14)(-3,11.5)(-1.5,5)(3,-1.5)
\pscurve[linewidth=0.2,border=0.5,arrowsize=1.2]{->}(-1.5,14)(0,11.5)(1.5,5)(0,-1.5)
\pscurve[linewidth=0.2,border=0.5,arrowsize=1.2]{->}(3,17)(3,11.5)(-1.5,5)(-3,-1.5)
\psline[linewidth=0.2](-1.5,14)(-2.5,12.7)
\rput(-3.5,13){$a$}
\rput(0.5,13){$b$}
\rput(4,13){$c$}
\end{pspicture}
\psset{xunit=1mm}
\captionof{figure}{Move R$\mathrm{III}$ as a sequence of moves R$\mathrm{VI}$ and R$\mathrm{V}$}\label{pic:CocyclesOmittingRIII}
\end{center}
\end{proof}

It is thus sufficient to keep only Axioms \eqref{E:BWRIV}-\eqref{E:BWRV} in the definition of qualgebra $2$-cocycles, simplifying their investigation. 

\begin{example}\label{EX:2cocCountVertices}
Let us explore qualgebra $2$-cocycles with a zero part $\w$. In this situation, Axioms \eqref{E:BWRIV}-\eqref{E:BWRV} become
\begin{align}
& \wl(a \lhd c, b \lhd c) = \wl(a,b), \label{E:BWRVI_}\\ 
& \wl(a,b) = \wl(b,a \lhd b). \label{E:BWRV_}
\end{align}
Relation~\eqref{E:BWRVI_} implies that $\wl(b,a \lhd b) = \wl(b \lhd b,a \lhd b) = \wl(b,a)$, thus the maps $(0,\wl)$ form a $2$-cocycle if and only if $\wl$ is a symmetric invariant (in the sense of~\eqref{E:BWRVI_}) form on~$\Q$. The simplest example of such a form is the constant map $\wl_1(a,b)=1$; in this case, $\BW_{(0,\wl_1)}(\D,\C)$ does not depend on the coloring~$\C$ and counts the difference between the numbers of unzip and zip vertices. Another example is the Kronecker delta  $\delta(a,b) = \begin{cases} 1 \text{ if } a = b \\ 0 \text{ otherwise} \end{cases}$, which is also a symmetric invariant form; in this case, $\BW_{(0,\delta)}(\D,\C)$ counts the difference between the numbers of $\C$-isosceles unzip and zip vertices (cf. Definition~\ref{D:IsoscelesColor}).
\end{example}

\subsection*{Qualgebra $2$-cocycles for trivial qualgebras}

We next explicitly describe the structure of $Z^2(\Q)$ for trivial qualgebras~$\Q$ (Definition~\ref{D:TrivialQuandle}):

\begin{proposition}\label{P:TrivialQuandleCoc}
Take a trivial qualgebra $(\Q,\lhd_0,\op)$. Endow~$\Q$ with an arbitrary linear order. Let $\ASF(\Q)$ be the Abelian group of all anti-symmetric bilinear forms~$\asf$ on~$\Q$ (i.e., $\asf(a,b)+\asf(b,a)=0$ and $\asf(a,b\op c) = \asf(a,b) + \asf(a,c)$), and let $\SF(\Q)$ be the Abelian group of all symmetric forms~$\wl$ on~$\Q$ (i.e.,
$\wl(a,b) = \wl(b,a)$). Then the Abelian group $Z^2(\Q)$ of qualgebra $2$-cocycles of~$\Q$ is a direct sum of $\LL = \{\L_{\wl} = (0,\wl) \,|\, \wl \in \SF(\Q) \}$ and of $\XX =\{\X_{\asf}= (\asf,\wl_{\asf}) \,|\, \asf \in \ASF(\Q)\}$, where 
\begin{align*}
\wl_{\asf}(a,b) &= \begin{cases} 0 \text{ if } a \le b, \\ \asf(b,a) \text{ otherwise}.\end{cases}
\end{align*}
\end{proposition}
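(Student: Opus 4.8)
The plan is to exploit the two simplifications already available. Lemma~\ref{L:QualgebraCocAxioms} lets me discard Axioms~\eqref{E:BWR3} and~\eqref{E:BWR1}, so a pair $(\w,\wl)$ lies in $Z^2(\Q)$ if and only if it satisfies only~\eqref{E:BWRIV}, \eqref{E:BWRVI} and~\eqref{E:BWRV}; and the triviality of~$\lhd_0$ means $a\lhd b = a$ everywhere, which I substitute into these three identities. Carrying out this substitution, \eqref{E:BWRIV} becomes $\w(a,b\op c)=\w(a,b)+\w(a,c)$; in~\eqref{E:BWRVI} the two copies of $\wl(a,b)$ cancel (since $a\lhd c=a$ and $b\lhd c=b$), leaving $\w(a\op b,c)=\w(a,c)+\w(b,c)$; and~\eqref{E:BWRV} collapses to $\w(a,b)=\wl(b,a)-\wl(a,b)$. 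Thus $\w$ is forced to be biadditive with respect to~$\op$, while the only surviving constraint linking $\w$ and $\wl$ is this last relation.

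Next I would read off anti-symmetry: adding the reduced~\eqref{E:BWRV} for the pairs $(a,b)$ and $(b,a)$ gives $\w(a,b)+\w(b,a)=0$, so every cocycle has $\w\in\ASF(\Q)$. Conversely I would record that both proposed families consist of genuine cocycles. For $(0,\wl)$ the reduced axioms hold precisely when $\wl$ is symmetric (the trivial-quandle specialization of Example~\ref{EX:2cocCountVertices}), giving $\LL\cong\SF(\Q)$. For $\X_\asf=(\asf,\wl_\asf)$, biadditivity of $\asf$ (which in the $\ASF(\Q)$ definition follows from anti-symmetry together with additivity in the second argument) yields~\eqref{E:BWRIV} and~\eqref{E:BWRVI}, while a short case check against the linear order --- splitting into $a<b$, $a>b$, $a=b$ and using $\asf(a,a)=0$ --- verifies~\eqref{E:BWRV}. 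A one-line computation $\wl_{\asf}+\wl_{\asf'}=\wl_{\asf+\asf'}$ shows that $\asf\mapsto\X_\asf$ is a group homomorphism, so $\XX$ is a subgroup isomorphic to $\ASF(\Q)$.

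It then remains to prove that the decomposition is direct and exhausts $Z^2(\Q)$. Directness is immediate: an element of $\LL\cap\XX$ has $\w$-component equal both to $0$ and to some $\asf$, forcing $\asf=0$ and hence $\wl_\asf=0$. For surjectivity, given any cocycle $(\w,\wl)$ I would set $\asf:=\w\in\ASF(\Q)$ and subtract $\X_\asf$; the difference is $(0,\wl-\wl_\asf)$, and the point is to show $\wl-\wl_\asf$ is symmetric. This is where the two instances of~\eqref{E:BWRV} are combined: the instance for $(\w,\wl)$ and the instance for the cocycle $\X_\asf$ both give $\wl(a,b)-\wl(b,a)=-\w(a,b)=\wl_\asf(a,b)-\wl_\asf(b,a)$, so the anti-symmetric parts agree and $\wl-\wl_\asf$ is symmetric, placing $(\w,\wl)-\X_\asf$ in $\LL$.

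I expect the only genuinely delicate point to be the role of the linear order in $\wl_\asf$: over~$\ZZ$ one cannot split an anti-symmetric $\asf$ as ``half on each side'', so the order is used to concentrate all of $\asf$ on the pairs $a>b$, yielding a $\ZZ$-valued section of the antisymmetrization map $\wl\mapsto\bigl(\wl(b,a)-\wl(a,b)\bigr)$. The case analysis proving $\X_\asf$ is a cocycle, and the matching of anti-symmetric parts in the surjectivity step, are the two places where this choice must be handled with care; everything else is routine linear bookkeeping.
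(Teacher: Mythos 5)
Your proposal is correct and takes essentially the same route as the paper's own proof: reduction to Axioms~\eqref{E:BWRIV}--\eqref{E:BWRV} via Lemma~\ref{L:QualgebraCocAxioms}, specialization to the trivial operation~$\lhd_0$, anti-symmetry of~$\w$ extracted from the reduced~\eqref{E:BWRV}, verification that~$\LL$ and~$\XX$ consist of cocycles and are subgroups with trivial intersection, and the decomposition $(\w,\wl) = \X_{\w} + (0,\wl-\wl_{\w})$ with the remainder shown symmetric. The only cosmetic difference is that the paper discards the reduced~\eqref{E:BWRVI} immediately as a consequence of the reduced~\eqref{E:BWRIV} plus anti-symmetry, while you carry it along; the substance is identical.
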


\begin{proof}
According to Lemma~\ref{L:QualgebraCocAxioms}, we are looking for  maps $\w,\wl:\Q\times\Q\rightarrow\ZZ$ satisfying Axioms \eqref{E:BWRIV}-\eqref{E:BWRV}. Using the triviality of the quandle operation~$\lhd_0$, and renaming the variables in~\eqref{E:BWRVI}, rewrite the Axioms as
\begin{align}
&\w(a,b \op c) = \w(a,b) + \w(a,c),\label{E:Tr1}\\
&\w(b\op c, a) = \w(b,a) +\w(c,a),\label{E:Tr2}\\ 
&\w(a,b) = \wl(b,a) - \wl(a,b).\label{E:Tr3}
\end{align}
The last one implies that~$\w$ is anti-symmetric, which makes~\eqref{E:Tr2} a consequence of~\eqref{E:Tr1}, and also shows that $\w \in \ASF(\Q)$. It suffices thus to consider Axioms~\eqref{E:Tr1} and~\eqref{E:Tr3} only.
Maps $\X_{\asf}$ and $\L_{\wl}$ are easily checked to satisfy these relations. Moreover, $\LL$ is a subgroup of $Z^2(\Q)$ by construction, and so is $\XX$, since $\X_{\asf} + \X_{\asf'} = \X_{\asf+\asf'}$. The intersection of~$\XX$  and $\LL$ is trivial: indeed, $\X_{\asf} = \L_{\wl}$ implies $\asf = 0$, hence $\X_{\asf} = 0$. To see that the two generate the whole $Z^2(\Q)$, note that, as shown above, one has $\w \in \ASF(\Q)$ for any $(\w,\wl) \in Z^2(\Q)$; then $(\w,\wl) - \X_{\w}$ is of the form $(0,\wl')$ and still lies in $Z^2(\Q)$, so, due to~\eqref{E:Tr3}, it satisfies $\wl'(a,b) = \wl'(b,a)$, hence $(0,\wl') = \L_{\wl'}$.
\end{proof}

One thus gets an Abelian group isomorphism $Z^2(\Q) \cong \SF(\Q) \oplus \ASF(\Q)$ for any trivial qualgebra~$\Q$.

\begin{example}\label{EX:2cocCountVerticesTrivial}
Returning to Example~\ref{EX:2cocCountVertices}, one sees that the part~$\LL$ of $Z^2(\Q)$ always contains the cocycles $\L_{\wl_1}$ and $\L_{\delta}$, where $\wl_1(a,b)=1$ for all the arguments, and~$\delta$ is the Kronecker delta.
\end{example}

Note that if~$\Q$ is finite, then the part~$\LL$ of $Z^2(\Q)$ has a basis
$\{\L_{x,y} = (0, \wl_{x,y}) \,|\, x \le y \}$, where $\wl_{x,y}$ takes value $1$ on (perhaps coinciding) pairs $(x,y)$ and $(y,x)$, and value $0$ elsewhere. Moreover, for finite~$\Q$ the part~$\XX$ of $Z^2(\Q)$ becomes trivial, since $\ASF(\Q)$ contains the zero map only: indeed, for $\w \in \ASF(\Q)$ the bilinearity implies that $\w(a,b \op b) = 2\w(a,b)$, thus if~$\w$ takes a non-zero value $\w(a,b)$, then it also takes arbitrary large (or small) values $2^k\w(a,b)$, $k \in \NN$, which is impossible since the set of values of~$\w$ is finite for finite~$\Q$. However, the part~$\XX$ can be non-trivial even for finite~$\Q$ if the coefficients live, for instance, in a finite cyclic group instead of~$\ZZ$.

\subsection*{Qualgebra $2$-cocycles for size $4$ qualgebras}

We now study the structure of $Z^2(\P)$ for non-trivial $4$-element qualgebras~$\P$, classified above:

\begin{proposition}\label{P:Z2Qualgebras4}
Let $(\P,\lhd,\op)$ be any of the nine $4$-element qualgebras from Proposition~\ref{P:Qualgebras4}. Then the group of its $2$-cocycles $Z^2(\P)$ is free Abelian of rank~$8$.
\end{proposition}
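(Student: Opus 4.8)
The plan is to reduce the statement to a rank computation and then exploit how rigid the quandle part of all nine structures is. Since a qualgebra $2$-cocycle is a pair of maps $\w,\wl\colon\P\times\P\to\ZZ$ constrained by \emph{integer-linear} relations, the group $Z^2(\P)$ is a subgroup of $\ZZ^{32}$, hence automatically free Abelian; the only issue is its rank. That rank equals the dimension of the $\QQ$-vector space of solutions, because for a subspace $V\subseteq\QQ^{32}$ cut out by rational linear equations the lattice $\ZZ^{32}\cap V$ is free of rank $\dim_\QQ V$. By Lemma~\ref{L:QualgebraCocAxioms} it suffices to impose Axioms~\eqref{E:BWRIV}, \eqref{E:BWRVI} and~\eqref{E:BWRV}. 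The key simplification is that all nine qualgebras of Proposition~\ref{P:Qualgebras4} share the \emph{same} operation~$\lhd$, whose only non-identity right translation is $\S_r=\rrho$ (so $x\lhd y=x$ for $y\neq r$, and $x\lhd r=(x)\rrho$); the nine cases differ only through the two choices $\gamma:=q\op q$ and $\beta:=q\op s$ in $\{p,q,s\}$. I will carry the computation once, keeping $\beta,\gamma$ symbolic.

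First I would extract $\w$ from~\eqref{E:BWRIV}. Feeding in the near-trivial~$\lhd$, the relation for $b\neq r$ reads $\w(a,b\op c)=\w(a,b)+\w(a,c)$; specialising $c=r$ (and using $x\op r=r$) forces $\w(a,p)=\w(a,q)=0$, whence $\w(a,s)=0$ as well via $p\op q=s$. The only remaining instance with $b\op c=r$ and $b=r$ uses $r\op r=s$ and yields $\w(a,r)+\w(a\rrho,r)=0$. Hence $\w$ is supported on the pairs $(p,r),(q,r)$, with $\w(p,r)=-\w(q,r)=:t$ and $\w(r,r)=\w(s,r)=0$: a single free parameter, uniformly in all nine structures.

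Next I would feed this into~\eqref{E:BWRV} and~\eqref{E:BWRVI}. Axiom~\eqref{E:BWRV} with $b\neq r$ gives $\wl(a,b)=\wl(b,a)$, so $\wl$ is symmetric, while the case $b=r$ gives $t=\wl(q,r)-\wl(p,r)$, so $t$ is determined by~$\wl$. In~\eqref{E:BWRVI}, the instances with $c\neq r$ are tautological, and for $c=r$ the relation becomes
\[
\wl(a\rrho,b\rrho)-\wl(a,b)=\epsilon(a)+\epsilon(b)-\epsilon(a\op b),\qquad \epsilon(x):=\w(x,r).
\]
Using the symmetry of~$\wl$ and the $\rrho$-antisymmetry of both sides, the independent unordered pairs involving $p$ or $q$ are $\{p,q\}$ (automatic), $\{p,r\}$ (which merely reproduces $t=\wl(q,r)-\wl(p,r)$, hence is redundant), and the two genuinely new equations
\[
\wl(q,q)-\wl(p,p)=2t+\epsilon(\gamma),\qquad \wl(q,s)-\wl(p,s)=t+\epsilon(\beta),
\]
where $\epsilon(\gamma),\epsilon(\beta)\in\{t,-t,0\}$ and $t=\wl(q,r)-\wl(p,r)$.

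Finally I would count. A symmetric form on a $4$-element set has a $10$-dimensional space of parameters, and $\w$ together with $t$ is then determined. The two displayed equations each contain a pair of variables ($\wl(p,p),\wl(q,q)$ for the first; $\wl(p,s),\wl(q,s)$ for the second) that does not occur in the other, so they are linearly independent no matter which values $\epsilon(\gamma),\epsilon(\beta)$ take; they therefore cut the dimension down by exactly~$2$, giving $10-2=8$. The main obstacle is precisely this last independence check across the nine cases: a priori one must worry that for some choice of $\beta,\gamma$ the two constraints degenerate or that a hidden relation appears. In fact the variation only rescales the coefficient of $t=\wl(q,r)-\wl(p,r)$, leaving the ``private variable'' argument intact, so the rank is $8$ for all nine qualgebras.
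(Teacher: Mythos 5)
Your proposal is correct and takes essentially the same route as the paper: after invoking Lemma~\ref{L:QualgebraCocAxioms} to keep only Axioms~\eqref{E:BWRIV}--\eqref{E:BWRV}, you run the same case analysis (exploiting that all nine structures share the quandle operation, with $\S_r=\rrho$ the only non-identity translation, and differ only through $q\op q$ and $q\op s$), arriving at exactly the paper's relations~\eqref{E:2for4_1}--\eqref{E:2for4_7} --- vanishing and antisymmetry of~$\w$, symmetry of~$\wl$, and the three relations tying $\wl(p,r),\wl(p,p),\wl(p,s)$ to their $q$-counterparts via $\w(q,r)$. The only divergence is the finishing step: the paper exhibits eight values that can be prescribed arbitrarily and determine the rest (an explicit $\ZZ$-basis, recorded as Notation~\ref{N:BasisQualgebraCoc4} and reused in Proposition~\ref{P:H2Qualgebras4}, and phrased so as to remain valid over arbitrary coefficient groups), whereas you count rational dimensions ($10$ symmetric parameters minus $2$ independent constraints) and use that a rationally defined sublattice of $\ZZ^{32}$ is free of rank $\dim_\QQ V$ --- a correct shortcut over~$\ZZ$, though it would not transfer to coefficient groups with torsion.
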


\begin{proof}
Lemma~\ref{L:QualgebraCocAxioms} tells us to look for  maps $\w,\wl:\P\times\P\rightarrow\ZZ$ satisfying Axioms \eqref{E:BWRIV}-\eqref{E:BWRV}. 

Start with Axiom~\eqref{E:BWRIV}. For $c=r$ and $b \neq r$, one has $b \op c =r$ and $a \lhd b = a$, so~\eqref{E:BWRIV} is equivalent to $\w(a,b) = 0$. One gets the first relation describing $2$-cocycles:
\begin{align}\label{E:2for4_1}
\forall \: x, \; \forall \: y \neq r, \qquad \w(x,y) &= 0.
\end{align} 
Case $b=r$, $c \neq r$, leads to the same relation. Further, for $b,c \neq r$, their product $b \op c$ is also different from~$r$, so~\eqref{E:2for4_1} implies~\eqref{E:BWRIV}. In the remaining case $b = c =r$, one gets $\w(a,r \op r) = \w(a,r) + \w(a\lhd r,r)$. The right side simplifies as $\w(a,r) + \w((a)\rrho,r)$, the left one is $\w(a,r \op r) = \w(a,s) = 0$ due to~\eqref{E:2for4_1}. One obtains 
\begin{align}
\w(p,r) + \w(q,r) &= 0, \label{E:2for4_2}\\
2\w(r,r) = 2\w(s,r) &= 0. \notag
\end{align}
We choose not to remove the coefficients $2$ in the last relation, so that our argument remains valid for $2$-cocycles with coefficients in any Abelian group; in any case, relation $\w(r,r) = \w(s,r)=0$ will be obtained below without assumptions on the coefficient group.

We now turn to Axiom~\eqref{E:BWRV}. If $b \neq r$, then $a \lhd b = a$, and, using~\eqref{E:2for4_1}, our axiom becomes $\wl(a,b) = \wl(b,a)$. This relation also holds true for $b=r, a \neq r$ by a symmetry argument, and trivially for $a=b=r$. Summarizing, one gets 
\begin{align}\label{E:2for4_3}
\forall \: x, y, \qquad \wl(x,y) &= \wl(y,x).
\end{align}
For $b=r$, \eqref{E:BWRV} becomes $\w(a,r) = \wl(r,(a)\rrho) - \wl(a,r)$, or, separating different values of~$a$ and using the symmetry~\eqref{E:2for4_3} of~$\wl$,
\begin{align}
\w(r,r) = \w(s,r) &= 0,\label{E:2for4_4}\\
\wl(p,r) - \wl(q,r) &= \w(q,r),\label{E:2for4_5}
\end{align}
and $\wl(q,r) - \wl(p,r) = \w(p,r)$, which is a consequence of~\eqref{E:2for4_5} and~\eqref{E:2for4_2} and is thus discarded.

It remains to analyze Axiom~\eqref{E:BWRVI}. For $c \neq r$ or for $c=r$ with $a,b \in \{r,s\}$, one has $a \lhd c = a$ and $b \lhd c = b$, so everything becomes zero due to~\eqref{E:2for4_1}. Consider now the case $c=r$. If $\{a,b\} = \{p,q\}$, then $a \op b = s$ (hence $\w(a \op b,c) = \w(s,r) = 0$ due to~\eqref{E:2for4_4}), $\w(a,r) + \w(b,r) = 0$ because of~\eqref{E:2for4_2}, and $\wl(a \lhd c, b \lhd c) = \wl((a)\rrho,(b)\rrho) = \wl(b,a) = \wl(a,b)$; all of these together imply our axiom. If $a=b=q$, then one gets 
\begin{align}
\wl(p,p) - \wl(q,q) &= 2\w(q,r) - \w(q \op q,r).\label{E:2for4_6}
\end{align}
Case $a=b=p$ leads to the same relation due to~\eqref{E:2for4_2}. For $a=r$, $b \in \{p,q\}$, one has $a \op b = r$, and our axiom becomes $\wl(r,(b)\rrho) = \w(b,r) + \wl(r,b)$, which is equivalent to~\eqref{E:2for4_5} (due to~\eqref{E:2for4_2} and~\eqref{E:2for4_3}). Case $b=r$, $a \in \{p,q\}$ is analogous. If $a=s$ and $b=q$, then our axiom becomes $\w(s \op q,r) +  \wl(s,p) = \w(q,r) + \wl(s,q)$, or else
\begin{align}
\wl(p,s) - \wl(q,s) &= \w(q,r) - \w(q \op s,r).\label{E:2for4_7}
\end{align}
Cases $a=s$, $b = p$ or $b=s$, $a \in \{p,q\}$ lead to the same relation.

Putting everything together, one concludes that $(\w,\wl)$ is a $2$-cocycle for~$\P$ if and only if the maps $\w,\wl:\P\times\P\rightarrow\ZZ$ satisfy Relations \eqref{E:2for4_1}-\eqref{E:2for4_7}. Note that $\w(q \op q,r)$ equals $\w(q,r)$, $-\w(q,r)$ or $0$, according to $q \op q$ being chosen as $q$, $p$ or $s$, and similarly for $\w(q \op s,r)$. Thus, one sees that the $8$ values $\w(q,r)$, $\wl(q,r)$, $\wl(q,s)$, $\wl(q,q)$, $\wl(q,p)$, $\wl(r,r)$, $\wl(s,r)$ and $\wl(s,s)$ can be chosen arbitrarily, and the other values of~$\w$ and~$\wl$ are deduced from these in a unique way. This shows that $Z^2(\P)$ is a free Abelian group of rank~$8$. Indeed, its $i$th generator can be obtained by letting the $i$th of the above values be~$1$, declaring the other $7$ values zero, and calculating the remaining values of~$\w$ and~$\wl$ using Relations \eqref{E:2for4_1}-\eqref{E:2for4_7}.
\end{proof}

\begin{notation}\label{N:BasisQualgebraCoc4}
We denote by $(\b^\w_{q,r}, \b_{q,r}, \b_{q,s}, \b_{q,q}, \b_{q,p}, \b_{r,r}, \b_{s,r}, \b_{s,s})$ the basis of $Z^2(\P)$ obtained in the proof.
\end{notation}


\subsection*{Qualgebra $2$-coboundaries}

Recall the definition $\w_\f(a, b) = \f(a\lhd b) - \f(a)$ of a $2$-coboundary for quandles, with an arbitrary map $\f:\Q \rightarrow \ZZ$ (Example~\ref{EX:QuandleCol2}). It can be interpreted as the difference between the total weight $\f(b) + \f(a\lhd b)$ at the bottom of the diagram describing the quandle coloring rule around a crossing point, and the total weight $\f(a) + \f(b)$ at the top of this diagram (see Figure~\ref{pic:Colorings}\rcircled{A}). Trying to treat the coloring rule around a $3$-valent vertex (Figure~\ref{pic:Colorings}\rcircled{B}) in a similar way, one gets a good candidate for the notion of qualgebra $2$-coboundary:

\begin{definition}\label{D:Qualgebra2Cob}
For a qualgebra~$\Q$ and a  map $\f:\Q \rightarrow \ZZ$, the pair of maps $(\w_\f,\wl_\f)$ defined by
\begin{align*}
\w_\f(a, b)& = \f(a\lhd b) - \f(a),\\
\wl_\f(a, b)& = \f(a) + \f(b) -\f(a\op b) 
\end{align*}
is called a ($\ZZ$-valued) \emph{qualgebra $2$-coboundary} of~$\Q$. The set of all qualgebra $2$-coboundaries of~$\Q$ is denoted by $B^2(\Q)$.
\end{definition}

\begin{proposition}\label{P:Qualgebra2Cob} 
Given a qualgebra~$(\Q,\lhd,\op)$, the set of its qualgebra $2$-coboundaries $B^2(\Q)$ is an Abelian subgroup of $Z^2(\Q)$. Moreover, for any $\Q$-colored graph diagram $(\D,\C)$ and any $2$-coboundary $(\w,\wl)$, the weight $\BW_{(\w,\wl)}(\D,\C)$ is zero.
\end{proposition}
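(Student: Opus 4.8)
The plan is to establish the two assertions separately. For the subgroup claim, the main point is that every pair $(\w_\f,\wl_\f)$ is genuinely a $2$-cocycle. By Lemma~\ref{L:QualgebraCocAxioms} it is enough to verify Axioms \eqref{E:BWRIV}, \eqref{E:BWRVI} and \eqref{E:BWRV}, and each of these reduces to a one-line telescoping cancellation once the definitions $\w_\f(a,b)=\f(a\lhd b)-\f(a)$ and $\wl_\f(a,b)=\f(a)+\f(b)-\f(a\op b)$ are plugged in. Concretely, \eqref{E:BWRIV} collapses using translation composability \eqref{E:QA1}, so that $\f(a\lhd(b\op c))=\f((a\lhd b)\lhd c)$ matches both sides; \eqref{E:BWRVI} collapses using distributivity \eqref{E:QA2}, since the terms $\f((a\op b)\lhd c)=\f((a\lhd c)\op(b\lhd c))$ cancel and leave $\f(a\lhd c)+\f(b\lhd c)-\f(a\op b)$ on both sides; and \eqref{E:BWRV} collapses using semi-commutativity \eqref{E:QAComm}, so that $\f(a\op b)=\f(b\op(a\lhd b))$. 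Having shown $B^2(\Q)\subseteq Z^2(\Q)$, I would then observe that the assignment $\f\mapsto(\w_\f,\wl_\f)$ is visibly additive in~$\f$, i.e.\ a homomorphism from the Abelian group of maps $\Q\to\ZZ$ into $Z^2(\Q)$; its image $B^2(\Q)$ is therefore an Abelian subgroup.

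For the vanishing of $\BW_{(\w_\f,\wl_\f)}(\D,\C)$, the plan is a global telescoping argument over the arcs of~$\D$. First I would record that, for a coboundary, the local weight at every special point~$P$ can be rewritten uniformly as
\[
\sum_{\alpha \text{ originating at } P}\f(\C(\alpha))-\sum_{\alpha \text{ terminating at } P}\f(\C(\alpha)),
\]
where ``originating/terminating'' is read off the orientation. This is exactly what the signs in Figures~\ref{pic:Quandles} and~\ref{pic:BoltzmannGraph} encode: at a crossing only the understrand breaks, giving $\f(\text{out})-\f(\text{in})=\w_\f(a,b)$ while the over-strand passes through and contributes nothing; at a zip vertex the two top arcs terminate and the bottom arc originates, giving $\f(a\op b)-\f(a)-\f(b)=-\wl_\f(a,b)$; at an unzip vertex the single top arc terminates and the two bottom arcs originate, giving $\f(a)+\f(b)-\f(a\op b)=\wl_\f(a,b)$. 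Summing this uniform expression over all special points, each arc of~$\D$ contributes its value $\f(\C(\alpha))$ once with a plus sign (at its unique originating point) and once with a minus sign (at its unique terminating point), so everything cancels and the total weight is~$0$; closed-loop arcs carrying no special point contribute nothing and do not affect the count.

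The routine parts are the three algebraic cancellations, which I would present compactly. The step that needs the most care is the bookkeeping in the telescoping argument: I must check that in a closed graph diagram every arc has exactly one tail endpoint and one head endpoint among the special points (so that each $\f$-value is counted once positively and once negatively), that the over-strand at a crossing is a single arc passing through rather than two arcs meeting, and that the orientation conventions --- in particular the well-orientedness of the vertices, i.e.\ only zip and unzip types occur --- make the ``originating vs.\ terminating'' assignment unambiguous at every special point. Once this combinatorial accounting is pinned down, the cancellation is immediate.
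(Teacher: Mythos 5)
Your proposal is correct, but it is organized differently from the paper's proof in its first half. For the inclusion $B^2(\Q)\subseteq Z^2(\Q)$, you perform the direct verification of Axioms \eqref{E:BWRIV}--\eqref{E:BWRV} (correctly reducing to these three via Lemma~\ref{L:QualgebraCocAxioms}, and your three cancellations via \eqref{E:QA1}, \eqref{E:QA2}, \eqref{E:QAComm} all check out), whereas the paper explicitly mentions this route but instead argues diagrammatically: the local weights of $(\w_\f,\wl_\f)$ are ``bottom-minus-top'' $\f$-increments, so they are automatically unchanged under any R-move (whose two sides share boundary colors), hence the weight function is Boltzmann and Proposition~\ref{P:BoltzmannGraph} forces the cocycle conditions with no computation at all. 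The paper's version buys conceptual clarity --- it explains \emph{why} coboundaries must be cocycles and reuses existing machinery --- while yours is self-contained, elementary, and makes the role of each qualgebra axiom explicit. For the vanishing of $\BW_{(\w_\f,\wl_\f)}(\D,\C)$, your argument and the paper's are essentially the same telescoping idea with different bookkeeping: you rewrite every local weight uniformly as $\sum_{\text{out}}\f - \sum_{\text{in}}\f$ and double-count over special points versus arcs, while the paper first telescopes the crossing weights along each edge of the graph (the color only changes at undercrossings), obtaining $\sum_e \f(\C(t(e)))-\f(\C(s(e)))$, and then reorganizes this sum at the $3$-valent vertices where it cancels the vertex weights. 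Your per-point accounting is slightly more direct and handles crossings and vertices on an equal footing; the paper's per-edge version makes the intermediate invariant (the increment of $\f$ along an edge) visible. The subtleties you flag --- closed loops without special points, over-strands at crossings, and well-orientedness making ``in/out'' unambiguous --- are exactly the right ones, and each is harmless for the reasons you indicate.
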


Before giving a proof, we write explicitly the weights of crossing points and vertices constructed out of the maps~$\w_\f$ and~$\wl_\f$ according to the rules from Figures~\ref{pic:Quandles} and~\ref{pic:BoltzmannGraph}; see Figure~\ref{pic:WeigtsCob}.

\begin{center}
\begin{pspicture}(37,13)
\psline[linewidth=0.6,arrowsize=2]{->}(0,10)(10,0)
\psline[linewidth=0.6,border=1.8,arrowsize=2]{->}(10,10)(0,0)
\rput[b](0,11){$a$}
\rput[b](10,11){$b$}
\rput[t](0,0){$b$}
\rput[t](10,0){$a \lhd b$}
\rput(23,8){$\mapsto \f(a \lhd b)$}
\rput(23,3){$ -\f(a)$}
\psline[linestyle=dotted](34,-2)(34,12)
\end{pspicture}
\begin{pspicture}(33,13)
\psline[linewidth=0.6,arrowsize=2]{->}(10,10)(0,0)
\psline[linewidth=0.6,border=1.8,arrowsize=2]{->}(0,10)(10,0)
\rput[b](0,11){$b$}
\rput[b](10,11){$a \lhd b$}
\rput[t](0,0){$\bw a \bw$}
\rput[t](10,0){$b$}
\rput(21,8){$\mapsto \f(a)$}
\rput(21,3){$ -\f(a \lhd b)$}
\psline[linestyle=dotted](30,-2)(30,12)
\end{pspicture}
\begin{pspicture}(38,13)
\psline[linewidth=0.6,arrowsize=2]{->}(5,10)(5,7)
\psline[linewidth=0.6,arrowsize=2]{->}(5,10)(5,5)(0,0)
\psline[linewidth=0.6,arrowsize=2]{->}(5,5)(10,0)
\rput[b](5,11){$a \op b$}
\rput[t](0,0){$\bw a \bw$}
\rput[t](10,0){$b$}
\rput(23,8){$\mapsto \f(a) + \f(b)$}
\rput(23,3){$- \f(a \op b)$}
\psline[linestyle=dotted](36,-2)(36,12)
\end{pspicture}
\begin{pspicture}(30,13)
\psline[linewidth=0.6,arrowsize=2]{->}(0,10)(2.5,7.5)
\psline[linewidth=0.6,arrowsize=2]{->}(0,10)(5,5)(5,0)
\psline[linewidth=0.6,arrowsize=2]{->}(10,10)(7.5,7.5)
\psline[linewidth=0.6](10,10)(5,5)
\rput[b](0,11){$a$}
\rput[b](10,11){$b$}
\rput[t](5,0){$a \op b$}
\rput(22,8){$\mapsto \f(a \op b)$}
\rput(22,3){$ - \f(a) - \f(b)$}
\end{pspicture}

\medskip
\captionof{figure}{Weight function for maps~$\w_\f$ and~$\wl_\f$}\label{pic:WeigtsCob}
\end{center}

\begin{proof}
Let us first show that a qualgebra $2$-coboundary $(\w_\f,\wl_\f)$ of~$\Q$ is also a qualgebra $2$-cocycle of~$\Q$. One can either check the necessary Axioms \eqref{E:BWRIV}-\eqref{E:BWRV} directly, or develop the ``total weight increment'' argument which lead to the definition of qualgebra $2$-coboundaries. Indeed, the $(\w_\f,\wl_\f)$-weight (Figure~\ref{pic:WeigtsCob}) of the $\Q$-colored diagrams that appear in R-moves with unzip vertices (Figure~\ref{pic:AxiomsQu2Cocycles}) is the difference between the total $\f$-weight at the bottom and at the top of these diagrams. Since the bottom/top colors are the same for both diagrams involved in an R-move, these diagrams have the same $(\w_\f,\wl_\f)$-weights, which means, according to (the proof of) Proposition~\ref{P:BoltzmannGraph}, that $(\w_\f,\wl_\f)$ is a qualgebra $2$-cocycle.

We have thus showed that $B^2(\Q) \subseteq Z^2(\Q)$. To see that it is an Abelian subgroup, observe that $(\w_\f,\wl_\f) + (\w_{\f'},\wl_{\f'}) = (\w_{\f+\f'},\wl_{\f+\f'})$, where maps $\Q \rightarrow \ZZ$ are added point-wise.

Take now a $\Q$-colored graph diagram $(\D,\C)$ and a $2$-coboundary $(\w_\f,\wl_\f)$. As shown above, the latter is also a $2$-cocycle, and hence, according to Proposition~\ref{P:BoltzmannGraph}, defines a Boltzmann weight function. We shall now prove that the total $\w_\f$-weight of the crossing points of $(\D,\C)$ kills the total $\wl_\f$-weight of its $3$-valent vertices, implying that $\BW_{(\w_\f,\wl_\f)}(\D,\C)=0$.

Consider an edge~$e$ of~$\D$, and analyse how the color behaves when one moves along~$e$. The color changes from~$a$ to $a \lhd b$ or $a \wlhd b$ when~$e$ goes under a $b$-colored arc (depending on the orientation of the latter) and stays constant otherwise. Observing that $\f(a \lhd^{\pm 1} b) - \f(a)$ is precisely the $\w_\f$-weight of the crossing point where the color changes, one concludes that the total weight of all the crossing points of~$\D$ is the sum $\sum_{e} \f(\C(t(e))) - \f(\C(s(e)))$ taken over all the edges~$e$ of~$\D$, where $s(e)$ and $t(e)$ are, respectively, the first and the last arcs of~$e$. Since each edge starts and finishes at a $3$-valent vertex, this sum can be reorganized to the sum $\sum_v \sum_{\alpha \in \A(v)} \pm \f(\alpha)$ taken over all the vertices~$v$ of~$\D$, where $\A(v)$ is the set of arcs adjacent to~$v$, and $\f(\alpha)$ is taken with the sign~$-$ if~$\alpha$ is directed from~$v$, and~$+$ otherwise. On the other hand, the total weight of all the $3$-valent vertices is the sum of the same form, but with the opposite sign convention (see Figure~\ref{pic:WeigtsCob}).
\end{proof}

\begin{example}\label{EX:2CobTrivialQualgebras}
Lets us now describe a qualgebra $2$-coboundary $(\w_\f,\wl_\f)$ for a trivial qualgebra~$(\Q,\lhd_0,\op)$ (Definition~\ref{D:TrivialQuandle}). Its $\w$-component is necessarily zero: $\w_\f(a, b) = \f(a\lhd_0 b) - \f(a) = \f(a) - \f(a) =0$. Its $\wl$-component is a symmetric form $\wl_\f(a, b) = \f(a) + \f(b) -\f(a\op b)$ (recall that~$\op$ is commutative for trivial qualgebras). Thus our $2$-coboundaries have the form  $\L_{\wl_\f}$, where~$\f$ runs through all maps from~$\Q$ to~$\ZZ$, and they all lie in the $\LL$-part of $Z^2(\Q)$ (cf. Proposition~\ref{P:TrivialQuandleCoc}).
\end{example}

\subsection*{Towards a qualgebra homology theory}

Proposition~\ref{P:Qualgebra2Cob} legitimates the following

\begin{definition}\label{D:QualgebraH2}
For a qualgebra~$\Q$, the quotient Abelian group $H^2(\Q) = Z^2(\Q) / B^2(\Q)$ is called the second ($\ZZ$-valued) \emph{qualgebra cohomology} of~$\Q$.
\end{definition}

Moreover, Proposition~\ref{P:Qualgebra2Cob} and Lemma~\ref{L:QualgebraCocGroup} imply that the $\Q$-colored graph diagram weight $\BW_{[(\w,\wl)]}(\D,\C)$ is well defined for equivalence classes $[(\w,\wl)] \in H^2(\Q)$. Note that however this need not be true for sub-diagrams.

We now calculate the second qualgebra cohomology for non-trivial $4$-element qualgebras. Remark that the result is the same for all the nine structures. Note also the torsion appearing in the cohomology groups.

\begin{proposition}\label{P:H2Qualgebras4}
Let $(\P,\lhd,\op)$ be any of the nine $4$-element qualgebras from Proposition~\ref{P:Qualgebras4}. Then one has $B^2(\P) \cong \ZZ^4$ and $H^2(\P) \cong \ZZ/2\ZZ \oplus \ZZ^4$.
\end{proposition}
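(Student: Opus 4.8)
The plan is to leverage the explicit description $Z^2(\P) \cong \ZZ^8$ from Proposition~\ref{P:Z2Qualgebras4} and to realize $B^2(\P)$ as the image of the coboundary homomorphism
$$\partial \colon \{\f \colon \P \to \ZZ\} \cong \ZZ^4 \longrightarrow Z^2(\P) \cong \ZZ^8, \qquad \f \longmapsto (\w_\f, \wl_\f),$$
after which both $B^2(\P)$ and $H^2(\P) = Z^2(\P)/B^2(\P)$ can be read off from the Smith normal form of the associated $8 \times 4$ integer matrix $M$ (columns indexed by $\f(p),\f(q),\f(r),\f(s)$, rows by the eight free coordinates of Notation~\ref{N:BasisQualgebraCoc4}).

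First I would compute $\partial$ in these coordinates. Since $\S_r = \rrho$ is the only non-trivial translation, $\w_\f$ vanishes except at $(\cdot,r)$, yielding the single relevant value $\w_\f(q,r) = \f(q \lhd r) - \f(q) = \f(p) - \f(q)$; the values of $\wl_\f(a,b) = \f(a)+\f(b)-\f(a\op b)$ at the seven distinguished positions follow directly from the multiplication table of Proposition~\ref{P:Qualgebras4}. The decisive structural fact, valid for all nine qualgebras at once, is that $\f(r)$ occurs in exactly one of the eight coordinates, namely
$$\wl_\f(r,r) = 2\f(r) - \f(s),$$
with coefficient $2$, and is absent or cancels everywhere else (for instance $\wl_\f(q,r) = \f(q)$ and $\wl_\f(s,r) = \f(s)$, because $q\op r = s\op r = r$). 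This is the algebraic reflection of the relation $r\op r = s$ together with $r$ being the unique element carrying a non-trivial right translation.

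Next I would record $M$ and compute its elementary divisors $d_1 \mid d_2 \mid d_3 \mid d_4$ by the gcd-of-minors method, crucially noting that the two rows depending on the choices of $q\op s$ and $q\op q$ (the coordinates $\wl(q,s)$ and $\wl(q,q)$) never touch the $\f(r)$-column. The four structure-independent rows $\w(q,r)=\f(p)-\f(q)$, $\wl(q,r)=\f(q)$, $\wl(s,r)=\f(s)$ and $\wl(r,r)=2\f(r)-\f(s)$ already supply $1\times1$, $2\times2$ and $3\times3$ minors equal to $1$, forcing $d_1 = d_2 = d_3 = 1$. For $d_4$ I would argue that the $\f(r)$-column has its unique non-zero entry equal to $2$, so every $4\times4$ minor must use that column and is hence divisible by $2$; since the minor on rows $\w(q,r),\wl(q,r),\wl(r,r),\wl(s,r)$ equals exactly $2$, one gets $d_1d_2d_3d_4 = 2$ and therefore $d_4 = 2$.

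Finally I would conclude: all $d_i$ being non-zero, $\partial$ is injective, so $B^2(\P) \cong \ZZ^4$; and the cokernel of an injection $\ZZ^4 \hookrightarrow \ZZ^8$ with elementary divisors $1,1,1,2$ is $\ZZ/2\ZZ \oplus \ZZ^4$, which is exactly $H^2(\P)$. The only real subtlety — and the reason the answer is identical for all nine structures — is isolating the single coordinate that carries the coefficient $2$ and verifying that the structure-dependent coordinates play no role in any of the minors selected; once this is arranged, the remainder is routine linear algebra over~$\ZZ$.
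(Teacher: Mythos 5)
Your proposal is correct, and while it rests on exactly the same computation as the paper, it packages the linear algebra differently. The data are identical: your $8\times 4$ matrix $M$ is precisely the transpose of the paper's Table~\ref{tab:b_a}, in which the coboundaries $\b_a=(\w_{\f_a},\wl_{\f_a})$ of the Dirac maps $\f_a$ are evaluated on the eight coordinates of Notation~\ref{N:BasisQualgebraCoc4}; and your key structural observation --- that $\f(r)$ enters only through $\wl_\f(r,r)=2\f(r)-\f(s)$, with coefficient $2$, while the structure-dependent coordinates $\wl(q,s)$ and $\wl(q,q)$ never involve $\f(r)$ --- is the same fact the paper encodes as $2\b_{r,r}=\ob_r=\b_r$. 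The difference is in the quotient computation: the paper interposes a subgroup $Z'$ between $B^2(\P)$ and $Z^2(\P)$, exhibits explicit bases $\B'$ and $\B''$, and assembles $H^2(\P)$ from $Z^2(\P)/Z'\cong\ZZ/2\ZZ$ and $Z'/B^2(\P)\cong\ZZ^4$, whereas you compute the elementary divisors of $M$ (your minor arguments are sound: every $4\times 4$ minor uses the $\f(r)$-column and is thus even, the displayed one equals $2$, and the smaller unit minors force $d_1=d_2=d_3=1$), so that injectivity gives $B^2(\P)\cong\ZZ^4$ and the cokernel gives $H^2(\P)\cong\ZZ/2\ZZ\oplus\ZZ^4$ in one stroke. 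Your route has a genuine advantage: the paper's displayed isomorphism $Z^2(\P)/B^2(\P)\cong Z^2(\P)/Z'\oplus Z'/B^2(\P)$ is not automatic --- an extension of $\ZZ/2\ZZ$ by $\ZZ^4$ need not split --- and really depends on the extra observation that $2\b_{r,r}$ lies in $B^2(\P)$ itself, so that $[\b_{r,r}]$ has order $2$ and spans a complement; the Smith-normal-form argument makes this point moot. What the paper's basis manipulation buys in exchange is explicit cocycle representatives $\b_{q,s},\b_{q,q},\b_{q,p},\b_{s,s},\b_{r,r}$ for generators of $H^2(\P)$, which are used immediately after the proposition to describe the corresponding weight invariants combinatorially; to recover those from your argument you would have to keep track of the unimodular row operations performed during the Smith reduction.
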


\begin{proof}
Recall the basis $\B=(\b^\w_{q,r}, \b_{q,r}, \b_{q,s}, \b_{q,q}, \b_{q,p}, \b_{r,r}, \b_{s,r}, \b_{s,s})$  of $Z^2(\P)$ (Notation~\ref{N:BasisQualgebraCoc4}). Consider also the subgroup $Z'$ of $Z^2(\P)$ with basis $\B'=(\b^\w_{q,r}, \b_{q,r}, \b_{q,s}, \b_{q,q}, \b_{q,p}, 2\b_{r,r}, \b_{s,r}-\b_{r,r}, \b_{s,s})$. The ``Dirac maps'' $\f_a:\P \rightarrow \ZZ$, $a \in \P$ defined by $\f_a(x) = \delta(a,x) = \begin{cases} 1 \text{ if } x = a \\ 0 \text{ otherwise} \end{cases}$ form a basis of the Abelian group  of maps $\f: \P \rightarrow \ZZ$, hence the pairs of maps $\b_a=(\w_{\f_a},\wl_{\f_a})$ with $a \in \P$ generate $B^2(\P) \subset Z^2(\P)$. We shall now show that in fact all the~$\b_a$ lie in~$Z'$, and that $\B''=(\b_p, \b_q, \b_r, \b_s, \b_{q,s}, \b_{q,q}, \b_{q,p}, \b_{s,s})$ is an alternative basis of $Z'$; this would give a $4$-element basis $(\b_p, \b_q, \b_r, \b_s)$ of $B^2(\P)$ and a $4$-element basis $([\b_{q,s}], [\b_{q,q}], [\b_{q,p}], [\b_{s,s}])$ of $Z'/B^2(\P)$ (here and afterwards the square brackets stand for equivalence classes of pairs of maps). Moreover, by construction $Z^2(\P)/Z' \cong \ZZ/2\ZZ$, and $[\b_{r,r}]$ is its generator. Putting together all the pieces, one gets
$$H^2(\P) = Z^2(\P)/B^2(\P) \cong  Z^2(\P)/Z' \oplus Z'/B^2(\P) \cong \ZZ/2\ZZ \oplus \ZZ^4.$$

In order to show that $\B''$ is indeed a basis, we calculate for the $2$-coboundaries $\b_a$ the $8$ values which completely determine a $2$-cocycle (cf. the proof of Proposition~\ref{P:Z2Qualgebras4}):

\begin{center}
\begin{tabular}{|c|cccc|cccc|}
\hline 
& $\w(q,r)$ & $\wl(q,r)$ & $\wl(r,r)$ & $\wl(s,r)$ & 
$\wl(q,s)$ & $\wl(q,q)$ & $\wl(q,p)$ & $\wl(s,s)$\\
\hline
$\b_p$ & $1$ & $0$ & $0$ & $0$ & $-\alpha_1$ & $-\beta_1$ & $1$ & $0$ \\
$\b_q$ & $-1$ & $1$ & $0$ & $0$ & $1-\alpha_2$ & $2-\beta_2$ & $1$ & $0$ \\
$\b_r$ & $0$ & $0$ & $2$ & $0$ & $0$ & $0$ & $0$ & $0$ \\
$\b_s$ & $0$ & $0$ & $-1$ & $1$ & $1-\alpha_3$ & $-\beta_3$ & $-1$ & $1$ \\
\hline
\end{tabular}
\captionof{table}{Essential components of the $2$-coboundaries $\b_a$}\label{tab:b_a}
\end{center}

In the table, exactly one $\alpha_i$ and one $\beta_j$ equal $1$, while the other are zero; this depends on the values of $q \op s$ and $q \op q$ in our $\P$.

Adding some linear combinations of the $2$-cocycles $\b_{q,s}$, $\b_{q,q}$, $\b_{q,p}$, and $\b_{s,s}$, one can transform the $\b_a$'s into $2$-cocycles $\ob_a \in Z'$ for which the value table can be obtained from Table~\ref{tab:b_a} by replacing everything in its right part (after the middle vertical bar) by zeroes. Since the $8$ values in the table completely determine a $2$-cocycle, one can express the elements of~$\B'$ in terms of those of~$\B''$: 
\begin{align*}
\b^\w_{q,r} &= \ob_q, & \b_{s,r}-\b_{r,r} &=\ob_s,\\
\b_{q,r} &= \ob_q + \ob_p, &  2\b_{r,r} &= \ob_r.
\end{align*}
Now~$\B''$ is a basis since~$\B'$ is such.
\end{proof}

Observe that the weight invariants corresponding to the $2$-cocycles $\b_{q,s}$, $\b_{q,q}$, $\b_{q,p}$, $\b_{s,s}$, and $\b_{r,r}$, whose classes modulo $B^2(\P)$ generate $H^2(\P)$, have an easy combinatorial description. Namely, $\BW_{\b_{q,s}}(\D,\C)$ counts the difference between the numbers of  unzip and zip vertices whose adjacent co-oriented arcs are colored with either~$s$ and~$p$, or~$s$ and~$q$; $\BW_{\b_{q,p}}(\D,\C)$ counts a similar difference for arcs colored with~$p$ and~$q$; 
finally, $\BW_{\b_{q,q}}(\D,\C)$ (respectively, $\BW_{\b_{s,s}}(\D,\C)$) counts a similar difference for both arcs having the same color~$p$ or~$q$ (respectively, $s$). 
Now, Proposition~\ref{P:Qualgebra2Cob} and Lemma~\ref{L:QualgebraCocGroup} imply that $\b_{r,r}$ gives only trivial invariants, at least when one works over~$\ZZ$ (one has $2\BW_{\b_{r,r}}(\D,\C) = \BW_{2\b_{r,r}}(\D,\C)=0$, the cocycle $2\b_{r,r}$ being a coboundary), and that the four invariants described above contain all the information one can deduce from non-trivial $4$-element qualgebra colorings of graphs using the Boltzmann weight method.

\begin{remark}
One would certainly expect qualgebra $2$-cocycles and $2$-coboundaries described above to fit into a complete \emph{qualgebra cohomology theory}, extending the celebrated quandle cohomology theory. However, the author knows how to construct such a theory for \emph{non-commutative qualgebras} only (that is, one keeps Axioms~\eqref{E:SD}-\eqref{E:Idem} and~\eqref{E:QA1}-\eqref{E:QA2}, but not the ``semi-commutativity''~\eqref{E:QAComm}). Topologically, this structure corresponds to \emph{rigid-vertex} well-oriented $3$-valent graphs, for which move R$\mathrm{V}$ should be removed from the list of Reidemeister moves (cf. also \cite{KauffmanGraphs}). Two-cocycles for this structure are defined by Relations~\eqref{E:BWR3}-\eqref{E:BWR1} and~\eqref{E:BWRIV}-\eqref{E:BWRVI} (Relation~\eqref{E:BWRV} being omitted), and they give precisely the Boltzmann weight functions for rigid-vertex graph diagrams. A cohomology theory for non-commutative qualgebras can be defined using the \textit{braided system} concept from \cite{Lebed2}; we shall present the details in a separate publication.
\end{remark}

\subsection*{Squandle $2$-cocycles}

Weight invariants can also be constructed out of squandle colorings, by a procedure that very closely repeats what we have done for qualgebra colorings. We shall now briefly present relevant definitions and results; all the details and proofs can be easily adapted from the qualgebra case.

\begin{definition}\label{D:Squandle2Coc}
For a squandle~$\Q$, a ($\ZZ$-valued) \emph{squandle $2$-cocycle} of~$\Q$ is a pair of maps $\w:\Q\times\Q\rightarrow\ZZ$, $\wl:\Q \rightarrow\ZZ$ satisfying Axioms \eqref{E:BWR3}-\eqref{E:BWR1} together with two additional ones: 
\begin{align*}
&\w(a,b^2) = \w(a,b) + \w(a\lhd b,b),\\
&\w(a^2, b) + \wl(a \lhd b) = 2\w(a,b) + \wl(a).
\end{align*}
The Abelian group of all squandle $2$-cocycles of~$\Q$ is denoted by $Z^2(\Q)$.
\end{definition}

Note that Axioms~\eqref{E:BWR3}-\eqref{E:BWR1} can no longer be omitted from the definition. 

\begin{proposition}\label{P:BoltzmannGraphSquandle}
Take a squandle~$\Q$ and maps $\w:\Q\times\Q\rightarrow\ZZ$, $\wl:\Q \rightarrow\ZZ$. The weight function constructed out of $(\w,\wl)$ according to Figures~\ref{pic:Quandles} and~\ref{pic:BoltzmannGraphSquandle} is Boltzmann if and only if $(\w,\wl) \in Z^2(\Q)$.
\end{proposition}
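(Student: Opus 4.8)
The plan is to imitate the proof of Proposition~\ref{P:BoltzmannGraph} almost verbatim, the only structural change being that the binary vertex weight $\wl(a,b)$ of the qualgebra setting is replaced by the unary weight $\wl(a)$ dictated by the squandle coloring rule of Figure~\ref{pic:Colorings}\rcircled{C}, in which the single color $a$ of the two co-oriented arcs determines the third color $a^2$. By Definition~\ref{D:BW} I must check, for each of the six R-moves together with the induced coloring change from Definition~\ref{D:TopColRules}, that the $(\w,\wl)$-weights of the two sub-diagrams coincide, and then show that the conjunction of these conditions is exactly membership in $Z^2(\Q)$.

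First I would dispose of moves R$\mathrm{I}$-R$\mathrm{III}$: since the coloring rule around crossing points is unchanged from the quandle case, the computation of Example~\ref{EX:QuandleCol2} shows that weight invariance under these three moves is equivalent to Axioms \eqref{E:BWR3}-\eqref{E:BWR1} for $\w$. Next, for the zip versions of R$\mathrm{IV}$ and R$\mathrm{VI}$ I would reproduce the weight book-keeping of Figure~\ref{pic:AxiomsQu2Cocycles}, now with both co-oriented arcs at every vertex carrying the same color. Tracking colors through R$\mathrm{IV}^z$, the vertex merges two copies of $b$ into $b^2$, so the qualgebra relation \eqref{E:BWRIV} specializes at $c=b$ to the first new squandle axiom $\w(a,b^2)=\w(a,b)+\w(a\lhd b,b)$; likewise R$\mathrm{VI}^z$, whose vertex produces $a^2$, specializes \eqref{E:BWRVI} (setting the two qualgebra vertex-arguments equal, so that the binary $\wl(x,x)$ becomes the unary $\wl(x)$) to $\w(a^2,b)+\wl(a\lhd b)=2\w(a,b)+\wl(a)$. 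The unzip versions follow from the zip ones exactly as in the qualgebra proof, by invoking the compatibility relations \eqref{E:SQA1'}-\eqref{E:SQA2'} of Lemma~\ref{L:SQualgebraProperties}, which let one treat $\wlhd$ in parallel with $\lhd$.

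The one move that behaves differently is R$\mathrm{V}$, and I expect this to be the conceptual crux. In the qualgebra case R$\mathrm{V}$ produced the genuinely new relation \eqref{E:BWRV}; here the isosceles constraint forces the two arcs entering the vertex to share a single color $a$, so that the crossing in R$\mathrm{V}$ only sees $a\lhd a=a$ by idempotence, and the move collapses to the single requirement $\w(a,a)=0$, i.e.\ Axiom \eqref{E:BWR1}. Thus R$\mathrm{V}$ imposes nothing beyond \eqref{E:BWR1}, and in particular the diagrammatic reduction of Lemma~\ref{L:QualgebraCocAxioms}---which derived \eqref{E:BWR3} from \eqref{E:BWRVI} and \eqref{E:BWRV} by decomposing an R$\mathrm{III}$ move into a chain of R$\mathrm{V}$'s and R$\mathrm{VI}$'s---is no longer available, since the intermediate diagrams of Figure~\ref{pic:CocyclesOmittingRIII} carry non-isosceles vertices that squandle colorings do not permit. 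This is precisely why \eqref{E:BWR3} and \eqref{E:BWR1} must now be kept as independent axioms, and verifying this non-omissibility is the only point where the squandle argument genuinely diverges from the qualgebra one. Assembling the six move-conditions then yields exactly the axioms of Definition~\ref{D:Squandle2Coc}, establishing the desired equivalence.
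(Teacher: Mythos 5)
Your proposal is correct and is essentially the paper's own argument: the paper leaves this proof implicit (``all the details and proofs can be easily adapted from the qualgebra case''), and your move-by-move adaptation of Proposition~\ref{P:BoltzmannGraph} --- R$\mathrm{I}$--R$\mathrm{III}$ giving \eqref{E:BWR3}--\eqref{E:BWR1}, R$\mathrm{IV}^z$/R$\mathrm{VI}^z$ giving the two new axioms, the unzip versions via Lemma~\ref{L:SQualgebraProperties}, and R$\mathrm{V}$ collapsing so that the reduction of Lemma~\ref{L:QualgebraCocAxioms} is unavailable and \eqref{E:BWR3}--\eqref{E:BWR1} must be kept as axioms --- is exactly that adaptation. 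One micro-step worth writing out: in R$\mathrm{V}$ the isosceles constraint at the vertex reads $a\lhd b=b$, and this forces $a=b$ because the right translation $\S_b$ is injective and fixes~$b$ by \eqref{E:Idem}; with that one line the collapse of R$\mathrm{V}$ to $\w(a,a)=0$ is fully justified.
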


\begin{center}
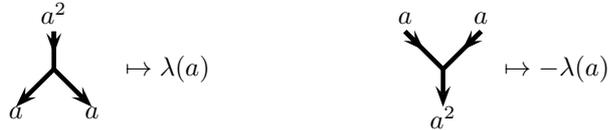

\begin{pspicture}(50,12)
\psline[linewidth=0.6,arrowsize=2]{->}(5,10)(5,7)
\psline[linewidth=0.6,arrowsize=2]{->}(5,10)(5,5)(0,0)
\psline[linewidth=0.6,arrowsize=2]{->}(5,5)(10,0)
\rput[b](5,11){$a^2$}
\rput[t](0,0){$a$}
\rput[t](10,0){$a$}
\rput(20,5){$\mapsto \wl(a)$}
\end{pspicture}
\begin{pspicture}(15,12)
\psline[linewidth=0.6,arrowsize=2]{->}(0,10)(2.5,7.5)
\psline[linewidth=0.6,arrowsize=2]{->}(0,10)(5,5)(5,0)
\psline[linewidth=0.6,arrowsize=2]{->}(10,10)(7.5,7.5)
\psline[linewidth=0.6](10,10)(5,5)
\rput[b](0,11){$a$}
\rput[b](10,11){$a$}
\rput[t](5,0){$a^2$}
\rput(20,5){$\mapsto -\wl(a)$}
\end{pspicture}
\captionof{figure}{Weight function for squandle-colored graph diagrams}\label{pic:BoltzmannGraphSquandle}
\end{center}

\begin{corollary}\label{L:SquandleWeightInvar}
Take a squandle~$\Q$ and a squandle $2$-cocycle $(\w,\wl)$. Consider $\Q$-coloring rules from Figure~\ref{pic:Colorings}\rcircled{A}\&\rcircled{C} and the weight function from Figures~\ref{pic:Quandles} and~\ref{pic:BoltzmannGraphSquandle}, still denoted by $(\w,\wl)$. Then the multi-set $\{\BW_{(\w,\wl)}(\D,\C) \,|\, \C\in\Col_{\Q}(\D)\}$ does not depend on the choice of a diagram~$\D$ representing a well-oriented $3$-valent knotted graph~$\Gamma$.
\end{corollary}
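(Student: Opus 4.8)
The plan is to deduce this corollary as a direct instantiation of the general weight-invariant machinery developed in Section~\ref{sec:Colorings}, exactly paralleling the proof of the qualgebra analogue, Corollary~\ref{L:QualgebraWeightInvar}. The three ingredients I need are already in place: a topological coloring rule, a Boltzmann weight function, and the abstract Lemma~\ref{L:WeightedInvar} that converts these into an invariant.

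First I would invoke Proposition~\ref{P:SquandleColor}, which guarantees that the $\Q$-coloring rules from Figure~\ref{pic:Colorings}\rcircled{A}\&\rcircled{C} are topological whenever $(\Q,\lhd,\sq)$ is a squandle. This is precisely our hypothesis, so the coloring rules satisfy Definition~\ref{D:TopColRules}. Next I would apply Proposition~\ref{P:BoltzmannGraphSquandle}, which asserts that the weight function built from a pair $(\w,\wl)$ according to Figures~\ref{pic:Quandles} and~\ref{pic:BoltzmannGraphSquandle} is Boltzmann if and only if $(\w,\wl)$ is a squandle $2$-cocycle, i.e.\ lies in $Z^2(\Q)$. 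Since we are given such a $2$-cocycle, the weight function in question is indeed Boltzmann in the sense of Definition~\ref{D:BW}.

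With both a topological coloring rule and a Boltzmann weight function at hand, Lemma~\ref{L:WeightedInvar} applies verbatim and yields the multi-set equality~\eqref{E:BWBij} for any two R-equivalent diagrams. Finally, by the results of Kauffman, Yamada and Yetter (\cite{KauffmanGraphs,YamadaGraphs,YetterGraphs}), the six Reidemeister moves of Figures~\ref{pic:RMoves} and~\ref{pic:RMovesGraphs} generate R-equivalence of well-oriented graph diagrams, and R-equivalence classes correspond to isotopy classes of knotted $3$-valent graphs; for well-oriented diagrams one may even restrict to the reduced move set of Lemma~\ref{L:RMovesForWellOrGraphs}. Hence the multi-set $\{\BW_{(\w,\wl)}(\D,\C) \,|\, \C\in\Col_{\Q}(\D)\}$ depends only on the isotopy class of the represented graph~$\Gamma$, as claimed.

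There is no genuine obstacle here: the entire difficulty has been absorbed into the earlier propositions, in particular into verifying (in the proof of Proposition~\ref{P:BoltzmannGraphSquandle}) that the squandle $2$-cocycle axioms are exactly the conditions forcing the weight function to be invariant under moves R$\mathrm{IV}$--R$\mathrm{VI}$. The only point worth a word of care is that, as the paper itself remarks after Definition~\ref{D:Squandle2Coc}, Axioms~\eqref{E:BWR3}--\eqref{E:BWR1} can \emph{not} be omitted in the squandle setting (unlike in the qualgebra case treated in Lemma~\ref{L:QualgebraCocAxioms}), so one must ensure the hypothesis $(\w,\wl)\in Z^2(\Q)$ includes them; this is already built into Definition~\ref{D:Squandle2Coc}, so the argument goes through unchanged.
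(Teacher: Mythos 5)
Your proof is correct and follows exactly the route the paper intends: the paper explicitly states that the squandle results are proved by adapting the qualgebra case, and your argument is precisely the squandle transcription of the proof of Corollary~\ref{L:QualgebraWeightInvar} (topological rules via Proposition~\ref{P:SquandleColor}, Boltzmann property via Proposition~\ref{P:BoltzmannGraphSquandle}, then Lemma~\ref{L:WeightedInvar} together with the Kauffman--Yamada--Yetter correspondence between R-equivalence and isotopy). Your closing remark about Axioms~\eqref{E:BWR3}--\eqref{E:BWR1} being non-omittable in the squandle setting is also consistent with the paper's own caveat after Definition~\ref{D:Squandle2Coc}.
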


\begin{definition}\label{D:Squandle2Cob}
For a squandle~$\Q$ and a  map $\f:\Q \rightarrow \ZZ$, the pair of maps $(\w_\f,\wl_\f)$ defined by
\begin{align*}
\w_\f(a, b)& = \f(a\lhd b) - \f(a),\\
\wl_\f(a)& = 2\f(a)-\f(a^2) 
\end{align*}
is called a ($\ZZ$-valued) \emph{squandle $2$-coboundary} of~$\Q$. The Abelian group of all squandle $2$-coboundaries of~$\Q$ is denoted by $B^2(\Q)$.
\end{definition}

\begin{proposition}\label{P:Squandle2Cob}
Given a squandle~$\Q$, the set of its squandle $2$-coboundaries $B^2(\Q)$ is a subgroup of $Z^2(\Q)$. Moreover, for any $\Q$-colored graph diagram $(\D,\C)$ and any $2$-coboundary $(\w,\wl)$, the weight $\BW_{(\w,\wl)}(\D,\C)$ is zero.
\end{proposition}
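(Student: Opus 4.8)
The plan is to mirror the proof of Proposition~\ref{P:Qualgebra2Cob} almost verbatim, using the ``total $\f$-weight increment'' interpretation of the coboundary maps. First I would check that every squandle $2$-coboundary $(\w_\f,\wl_\f)$ is in fact a squandle $2$-cocycle. Rather than verifying the two defining relations from Definition~\ref{D:Squandle2Coc} by hand, I would observe that the value $\w_\f(a,b)=\f(a\lhd b)-\f(a)$ is precisely the difference between the total $\f$-weight of the bottom arcs and that of the top arcs of the crossing pattern from Figure~\ref{pic:Quandles}, while $\wl_\f(a)=2\f(a)-\f(a^2)$ is the analogous bottom-minus-top difference for the squandle vertex pattern from Figure~\ref{pic:BoltzmannGraphSquandle} (the two co-oriented legs contribute $\f(a)+\f(a)$ and the doubled leg contributes $\f(a^2)$; one checks this holds in both orientations). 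Consequently, for any of the six R-moves the $(\w_\f,\wl_\f)$-weight of a correspondingly colored sub-diagram equals the total $\f$-weight at its bottom minus that at its top. Since the coloring bijection from Definition~\ref{D:TopColRules} matches the two sub-diagrams outside the small ball, their top and bottom boundary colors agree, so the two weights coincide; by Proposition~\ref{P:BoltzmannGraphSquandle} this means $(\w_\f,\wl_\f)$ is a squandle $2$-cocycle, that is, $B^2(\Q)\subseteq Z^2(\Q)$.

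Next I would establish that $B^2(\Q)$ is an Abelian subgroup. This is immediate from the linearity of the assignments $\f\mapsto\w_\f$ and $\f\mapsto\wl_\f$: for maps $\f,\f':\Q\to\ZZ$ one has $(\w_\f,\wl_\f)+(\w_{\f'},\wl_{\f'})=(\w_{\f+\f'},\wl_{\f+\f'})$, and the zero map yields the zero coboundary, so $B^2(\Q)$ is closed under addition and inverses inside the Abelian group $Z^2(\Q)$.

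The last and most substantive step is the vanishing of $\BW_{(\w_\f,\wl_\f)}(\D,\C)$ on every colored diagram, for which I would reuse the telescoping argument from Proposition~\ref{P:Qualgebra2Cob}. Walking along a fixed edge~$e$ of~$\D$, the color changes only when~$e$ passes under another arc, and there it changes from~$a$ to $a\lhd^{\pm 1}b$; the $\w_\f$-weight of that crossing is exactly $\f(a\lhd^{\pm 1}b)-\f(a)$, so the sum of the crossing weights along~$e$ telescopes to $\f(\C(t(e)))-\f(\C(s(e)))$, the $\f$-values of the last and first arcs of~$e$. Summing over all edges and regrouping the terms by the $3$-valent vertex at which each edge starts or ends, the total crossing weight becomes $\sum_v\sum_{\alpha\in\A(v)}\pm\f(\C(\alpha))$, with sign~$-$ for arcs directed out of~$v$ and~$+$ otherwise. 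By the definition of~$\wl_\f$ (and Figure~\ref{pic:BoltzmannGraphSquandle}), the total weight of the $3$-valent vertices is the very same sum taken with the opposite sign convention, so the two cancel and $\BW_{(\w_\f,\wl_\f)}(\D,\C)=0$.

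I expect the only delicate point to be the sign bookkeeping in this last step: checking that at a squandle vertex, where the two co-oriented legs carry a common color~$a$ and the remaining leg carries~$a^2$, the signed contribution $\pm(2\f(a)-\f(a^2))$ arising from the edge-telescoping matches, with opposite sign, the vertex weight $\mp\wl_\f(a)$. This is handled exactly as in the qualgebra case, where the co-oriented legs could carry distinct colors~$a$ and~$b$; the squandle situation is the specialization $b=a$, so no genuinely new phenomenon arises.
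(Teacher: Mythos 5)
Your proposal is correct and is essentially the paper's own argument: the paper gives no separate proof for the squandle case, stating only that ``all the details and proofs can be easily adapted from the qualgebra case,'' and your three steps (the bottom-minus-top $\f$-weight interpretation to show $B^2(\Q)\subseteq Z^2(\Q)$ via Proposition~\ref{P:BoltzmannGraphSquandle}, linearity of $\f\mapsto(\w_\f,\wl_\f)$ for the subgroup claim, and the edge-telescoping argument for the vanishing of $\BW_{(\w_\f,\wl_\f)}$) are exactly the adaptation of the proof of Proposition~\ref{P:Qualgebra2Cob} that the paper intends. Your closing remark is also accurate: the vertex sign bookkeeping is just the specialization $b=a$ of the qualgebra computation, so nothing new is needed.
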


\begin{definition}\label{D:H2}
For a squandle~$\Q$, the quotient Abelian group $H^2(\Q) = Z^2(\Q) / B^2(\Q)$ is called the second ($\ZZ$-valued) \emph{squandle cohomology} of~$\Q$.
\end{definition}

\begin{example}
For trivial squandles, all $2$-cocycles have the form $(\w,\wl)$, where~$\wl$ is arbitrary, and~$\w$ satisfies
\begin{align*}
&\w(a,b^2) = \w(a^2, b) = 2\w(a,b),\\
&\w(a,a)=0.
\end{align*}
In particular, all $\ZZ$-valued $2$-cocycles of finite squandles have a zero $\w$-part.
The $2$-coboundaries have the form $(0,\wl_\f)$ here, where~$\wl_\f(a)=2\f(a)-\f(a^2)$.
\end{example}

\begin{example}
Recall the four $4$-element squandles from Proposition~\ref{P:Squandles4}. Arguments analogous to those used to prove Propositions~\ref{P:Z2Qualgebras4} and~\ref{P:H2Qualgebras4} show that for all these squandles, the Abelian groups $Z^2(\Q)$ and $B^2(\Q)$ are free of rank~$4$. As for cohomologies, one has $H^2(\Q) \cong \ZZ / 2\ZZ$, except for the squandle of the second type with $q^2=s$, in which case one obtains $H^2(\Q) \cong \ZZ / 2\ZZ \oplus \ZZ / 2\ZZ$.
\end{example}

\section{Going further}

This is the first paper in a series of publications devoted to qualgebras and squandles. A lot of work remains to be done on the algebraic as well as on the topological sides.

First, we are currently working on an algebraic study of qualgebras and squandles (\cite{LebedQAAlg}): their general properties, the ``qualgebrization'' of familiar quandles (cf. Example~\ref{EX:NonQualgebras}), conceptual examples, a classification of all structures in small size. Sizes $5$ and $6$ are still doable by hand, and contain a large variety of examples. It would be interesting to calculate the induced invariants for reasonably ``small'' graphs.
Also, as mentioned in Section~\ref{sec:QualgebraHom}, general qualgebra and squandle cohomology theories would be of interest.

There is also a variation of qualgebra/squandle structure called \textit{symmetric qualgebra/squandle}. It includes a special involution~$\rho$ compatible both with the quandle operation~$\lhd$ --- in the sense of Axioms~\eqref{E:SymQu1}-\eqref{E:SymQu3} (thus~$\rho$ is a good involution), and with the qualgebra/squandle operation --- in the sense of certain natural axioms. 
Symmetric quandles were invented by S.Kamada (\cite{KamadaSymm}) in order to extend quandle coloring invariants of oriented knots to unoriented ones; they were later used by Y.Jang and K.Oshiro (\cite{JangOshiro}) for extending quandle coloring invariants of oriented graphs (with coloring rules from Figure~\ref{pic:QuandlesForGraphs}\rcircled{C}) to unoriented ones. Similarly, our symmetric qualgebras/squandles are tailored for coloring unoriented knotted $3$-valent graph diagrams, and therefore lead to invariants of such graphs. Together with the usual group example, one finds numerous examples even in small size. A detailed study of these structures and their topological applications will appear in a subsequent publication.

Lastly, a variation of coloring ideas includes assigning colors to diagram regions, and not only arcs, with a relevant notion of topological coloring rules. Such colorings are called \textit{shadow colorings} in the quandle case, and corresponding counting and weight invariants prove to be extremely powerful for knots. The same can be done for graphs by introducing the notions of \textit{qualgebra/squandle modules} (used for coloring regions), \textit{qualgebra/squandle $2$-cocycles with coefficients} (used for fabricating Boltzmann weight functions), and constructing counting and weight invariants out of these. Note that, regarding a qualgebra/squandle as a module over itself, one naturally gets a definition of \textit{qualgebra/squandle $3$-cocycles} (without coefficients), suggesting one more step towards a qualgebra/squandle cohomology theory. All of this will be presented in details elsewhere.

\appendix
\appendixpage
\section{Proof of Proposition~\ref{P:QualgebraAxiomatize}}\label{A:Proof}

Take two elements $a \neq b$ from~$\SS$. In (the group qualgebra of) $FG_{\SS}$, one has
\begin{equation}\label{E:FreeQA}
(b \lhd a) \op (a \lhd b) = ((a \wlhd b) \lhd a) \op b,
\end{equation} 
since both equal $a^{-1}bab^{-1}ab$. Let us show that this relation fails in $FAQA_{\SS}$.

We first present a detailed description of $FAQA_{\SS}$. The proof is omitted here, but will appear in~\cite{LebedQAAlg}; it is close to what was done in a related context in~\cite{DehornoyFreeALDS}.
A \textit{$\lhd$-term} in $FAQA_{\SS}$ is an element of the form 
\begin{equation*}
t = (\cdots ((a_0 \lhd^{\varepsilon_1} a_1) \lhd^{\varepsilon_2} a_2) \cdots )\lhd^{\varepsilon_r} a_r,
\end{equation*}
where $a_i \in \SS$, $\varepsilon_i \in \{\pm\}$, and, as usual, $\lhd^+$ denotes~$\lhd$, while $\lhd^- = \wlhd$. We compactly write it as
\begin{equation}\label{E:FreeQA2}
t = a_0 \lhd^{\varepsilon_1} a_1 \lhd^{\varepsilon_2} a_2 \cdots \lhd^{\varepsilon_r} a_r.
\end{equation}
A $\lhd$-term is called \textit{reduced} if $a_0 \neq a_1$ and there are no $i > 0$ with $a_i = a_{i+1}$ and $\varepsilon_i = -\varepsilon_{i+1}$. Applying Axioms~\eqref{E:Inv}-\eqref{E:Idem}, seen as rewriting rules here, any $\lhd$-term~$t$ can be presented as a uniquely determined reduced one, denoted by $red(t)$ and called the \textit{reduced form} of~$t$. 

\begin{lemma}\label{L:FreeQA}
\begin{enumerate}
\item Any $x \in FAQA_{\SS}$ can be written in a \emph{product form}, i.e., omitting parentheses thanks to the associativity, as $x = t_1 \op t_2 \op \cdots \op t_n$, where each $t_i$ is a reduced $\lhd$-term. 
\item If an $x \in FAQA_{\SS}$ has two presentations $x = t_1 \op t_2 \op \cdots \op t_n$ and $x = t'_1 \op t'_2 \op \cdots \op t'_{n'}$ as above, then $n = n'$, and the presentations are related by a finite sequence of applications of~\eqref{E:QAComm}. Concretely, a \emph{``positive''} application of~\eqref{E:QAComm} consists in replacing $t_i \op t_{i+1}$ with $t_{i+1} \op red(t_i \lhd t_{i+1})$, and a \emph{``negative''} application replaces it with $red(t_{i+1} \wlhd t_i) \op t_{i}$, where $t_{i+1} \wlhd t_i$ for example is seen as a $\lhd$-term via
\begin{align*}
t_{i+1} \wlhd t_i &= t_{i+1} \wlhd (a_0 \lhd^{\varepsilon_1} a_1 \cdots \lhd^{\varepsilon_r} a_r) = t_{i+1} \lhd^{-\varepsilon_r} a_r \cdots \lhd^{-\varepsilon_1} a_1 \wlhd a_0\lhd^{\varepsilon_1} a_1\cdots \lhd^{\varepsilon_r} a_r.
\end{align*}
\end{enumerate}
\end{lemma}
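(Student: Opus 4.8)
The plan is to prove the two parts separately: Part~(1), the existence of a product form, by a structural induction that uses the other axioms as rewriting rules, and Part~(2), uniqueness up to~\eqref{E:QAComm}, by constructing an explicit model of $FAQA_{\SS}$ whose elements are exactly the $\sim$-classes of product forms, $\sim$ being the equivalence generated by the positive and negative moves described in the statement.

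First I would prove Part~(1) by induction on the way $x \in FAQA_{\SS}$ is built from the generators $\SS$ using $\lhd$, $\wlhd$ and $\op$. A single generator is already a reduced $\lhd$-term (with $r=0$). If $x = y \op z$ with $y,z$ in product form, associativity lets me concatenate the two forms. If $x = y \lhd^{\pm} z$ with $y = t_1 \op \cdots \op t_n$ and $z = s_1 \op \cdots \op s_m$ in product form, I first push the right translation inside the left product using distributivity~\eqref{E:QA2} (resp.~\eqref{E:QA2'}), obtaining $x = (t_1 \lhd^{\pm} z) \op \cdots \op (t_n \lhd^{\pm} z)$; then I unfold conjugation by the product $z$ into iterated conjugation by the factors $s_j$ via translation composability~\eqref{E:QA1} (resp.~\eqref{E:QA1'}); finally I turn each $t_i \lhd^{\pm} s_j$, a conjugation of one $\lhd$-term by another, back into a single $\lhd$-term by means of the quandle identity $a \lhd (b \lhd c) = ((a\wlhd c)\lhd b)\lhd c$ and its variants, exactly as in the computation in the proof of Lemma~\ref{L:Sa}. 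Applying \eqref{E:Inv} and \eqref{E:Idem} as rewriting rules then yields reduced $\lhd$-terms, the existence and uniqueness of $red(t)$ being the fact recalled just before the lemma.

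For Part~(2) I would build the model directly. Let $M$ be the set of finite sequences of reduced $\lhd$-terms modulo $\sim$, and write $[t_1,\dots,t_n]$ for a class. I define $\op$ on $M$ by concatenation (well defined and associative, since the moves are local), and $\lhd$ factor-wise by $[t_1,\dots,t_n] \lhd [s_1,\dots,s_m] = [\,red(t_1 \lhd s_1 \lhd \cdots \lhd s_m),\dots,red(t_n \lhd s_1 \lhd \cdots \lhd s_m)\,]$, each entry being read as iterated conjugation and re-expanded into a single $\lhd$-term as in Part~(1). The crucial point is that $\lhd$ is well defined on $\sim$-classes: conjugation by $z$ depends only on the composite right translation $\prod_j \S_{s_j}$, which is unchanged by a move on $z$ because $\S_{s_j}\S_{s_{j+1}} = \S_{s_{j+1}}\S_{red(s_j \lhd s_{j+1})}$, while a move on the left argument is carried to a move on the result since semi-commutativity commutes with a fixed right translation. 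I would then verify that $(M,\lhd,\op)$ satisfies \eqref{E:SD}--\eqref{E:Idem}, \eqref{E:QA1}, \eqref{E:QA2}, associativity and, by construction, \eqref{E:QAComm}, so that $M$ is an associative qualgebra, and that $a \mapsto [a]$ enjoys the universal property: any map $\SS \to A$ into an associative qualgebra extends uniquely to a morphism $M \to A$ sending $[t_1,\dots,t_n]$ to the $\op$-product of the interpretations of the $t_i$, this being well defined precisely because the defining moves are instances of~\eqref{E:QAComm} valid in $A$. Hence $M \cong FAQA_{\SS}$. Since by Part~(1) this isomorphism carries each element to the $\sim$-class of any of its product forms, two product forms represent the same element of $FAQA_{\SS}$ if and only if they are $\sim$-equivalent; as each move preserves the number of factors, this also forces $n = n'$.

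The main obstacle is the well-definedness of $\lhd$ on $M$ together with the verification of the qualgebra axioms in $M$: these are intricate bookkeeping computations on iterated conjugations of $\lhd$-terms, and the one genuinely delicate check is that the factor-wise conjugation formula respects the moves in both arguments at once. I would organise this around the representation $\S$ into the automorphism group (Lemma~\ref{L:Sa}), which reduces each such check to an identity between products of right translations and keeps the combinatorics manageable. A rewriting approach via a confluent term-rewriting system is possible in principle, but it is hampered by the non-terminating nature of~\eqref{E:QAComm}, which is why I favour the explicit model.
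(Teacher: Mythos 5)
The first thing to note: the paper contains no proof of this lemma. It states explicitly that ``the proof is omitted here, but will appear in~\cite{LebedQAAlg}; it is close to what was done in a related context in~\cite{DehornoyFreeALDS}.'' So there is no in-paper argument to compare yours against; your proposal has to stand on its own. On that basis, the overall architecture is right and is the standard one for such normal-form statements: Part~(1) by structural induction (using~\eqref{E:QA2}/\eqref{E:QA2'} to distribute a translation over a product, \eqref{E:QA1}/\eqref{E:QA1'} to unfold conjugation by a product into iterated conjugations, and quandle identities --- legitimately applied, since there you are computing inside $FAQA_{\SS}$, which is a qualgebra --- to recompress each $t_i \lhd^{\pm} s_j$ into a single $\lhd$-term, then reducing); and Part~(2) by realizing $FAQA_{\SS}$ concretely as the set $M$ of $\sim$-classes of sequences of reduced $\lhd$-terms, with concatenation and factor-wise conjugation, and checking the universal property. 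You also correctly isolate the crux of Part~(2): well-definedness of $\lhd$ on $\sim$-classes in both arguments, with each check reduced to an identity between composites of right translations.

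There is, however, one genuine soft spot. The identities you lean on --- $\S_{s_j}\S_{s_{j+1}} = \S_{s_{j+1}}\S_{red(s_j \lhd s_{j+1})}$ as operators on reduced $\lhd$-terms, invariance of translations under $red$, and the fact that a composite translation commutes with $(t,t') \mapsto red(t \lhd t')$ --- are statements about the term model, and they do not follow from Lemma~\ref{L:Sa}, which is a theorem about qualgebras: invoking it for $M$ (or for the set of reduced terms) before these are known to be qualgebras/quandles is circular. What your argument actually requires, as a preliminary lemma, is that the reduced $\lhd$-terms equipped with the operation ``expand, then reduce'' form a quandle --- in fact the free quandle on~$\SS$ --- on which every right translation acts as an automorphism. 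This is true and classical (identify $a_0 \lhd^{\varepsilon_1}a_1 \cdots \lhd^{\varepsilon_r}a_r$ with the conjugate $w^{-1}a_0w$, $w = a_1^{\varepsilon_1}\cdots a_r^{\varepsilon_r}$, in the free group, under which the expansion formula becomes conjugation; alternatively, represent each $\lhd$-term by its string of generator translations), but it must be stated and proved: the paper's definition of $red$ involves only the rewriting rules~\eqref{E:Inv}--\eqref{E:Idem}, and self-distributivity-type identities for reduced terms are not formal consequences of those rules. Once this foundation is in place, your verifications (including the not-quite-immediate check of~\eqref{E:Idem} in $M$, which needs a full cycle of positive moves, and of~\eqref{E:QAComm}, which needs moving every $t_i$ past every $s_j$) do go through, and the isomorphism $M \cong FAQA_{\SS}$, together with the fact that each move preserves the number of factors, yields Part~(2) exactly as you say.
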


A reduced $\lhd$-term~$t$ written as in~\eqref{E:FreeQA2} is called a \textit{tail} of a reduced $\lhd$-term~$t'$ if    
\begin{equation}\label{E:FreeQA4}
t' = b_0 \lhd^{\zeta_1} b_1 \cdots \lhd^{\zeta_s} b_s \lhd a_0 \lhd^{\varepsilon_1} a_1 \cdots \lhd^{\varepsilon_r} a_r,
\end{equation}
with the additional technical condition $b_s \neq a_0$. This relation is clearly transitive: a tail of a tail of~$t'$ is still a tail of~$t'$.

This vocabulary allows us to state a lemma crucial for proving the proposition:
\begin{lemma}\label{L:FreeQA2}
Let~$t$ and~$t'$ be reduced $\lhd$-terms such that~$t$ is a tail of~$t'$. Then~$t'$ is a tail of $red(t \lhd t')$.
\end{lemma}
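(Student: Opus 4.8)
The plan is to reduce everything to a single \emph{conjugation-expansion identity} and then exploit the cancellation forced by the tail hypothesis. First I would record, for any element $x$ of $FAQA_{\SS}$ and any $\lhd$-term $y = c_0 \lhd^{\eta_1} c_1 \cdots \lhd^{\eta_m} c_m$ (with $c_i \in \SS$, $\eta_i \in \{\pm\}$), the identity
\[
x \lhd y = x \lhd^{-\eta_m} c_m \lhd^{-\eta_{m-1}} c_{m-1} \cdots \lhd^{-\eta_1} c_1 \lhd c_0 \lhd^{\eta_1} c_1 \cdots \lhd^{\eta_m} c_m .
\]
This is a pure quandle identity, proved by induction on $m$ from the one-step relation $x \lhd (z \lhd^{\eta} c) = ((x \lhd^{-\eta} c) \lhd z) \lhd^{\eta} c$, which itself follows from the fact that the right translation $\S_c$ and its inverse $x \mapsto x \wlhd c$ are quandle automorphisms, i.e. from \eqref{E:SD} and \eqref{E:Inv}. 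In particular no use of $\op$, and no appeal to the group-qualgebra model, is needed, so the identity is legitimate inside $FAQA_{\SS}$.

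Next I would apply this with $x = t = a_0 \lhd^{\varepsilon_1} a_1 \cdots \lhd^{\varepsilon_r} a_r$ and $y = t'$, reading off from \eqref{E:FreeQA4} that the conjugating word of $t'$ is $b_1^{\zeta_1}\cdots b_s^{\zeta_s}\, a_0^{+}\, a_1^{\varepsilon_1}\cdots a_r^{\varepsilon_r}$ over the base $b_0$. The expansion then prepends to $t$ the reversed inverse conjugators $\lhd^{-\varepsilon_r} a_r \cdots \lhd^{-\varepsilon_1} a_1 \lhd^{-} a_0 \lhd^{-\zeta_s} b_s \cdots \lhd^{-\zeta_1} b_1$. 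This is the crux: the tail hypothesis is exactly what makes the body of $t$ coincide with the outer part of $t'$'s conjugating word, so that the block $\lhd^{\varepsilon_1} a_1 \cdots \lhd^{\varepsilon_r} a_r \lhd^{-\varepsilon_r} a_r \cdots \lhd^{-\varepsilon_1} a_1$ collapses to the identity by \eqref{E:Inv}, and the surviving $a_0 \lhd^{-} a_0 = a_0 \wlhd a_0$ becomes $a_0$ by idempotence. What remains is the explicit $\lhd$-term
\[
t \lhd t' = a_0 \lhd^{-\zeta_s} b_s \cdots \lhd^{-\zeta_1} b_1 \lhd b_0 \lhd^{\zeta_1} b_1 \cdots \lhd^{\zeta_s} b_s \lhd a_0 \lhd^{\varepsilon_1} a_1 \cdots \lhd^{\varepsilon_r} a_r ,
\]
whose suffix starting at $b_0$ is literally $t'$.

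Finally I would check that this term is already reduced, so that it equals $red(t \lhd t')$, and that it exhibits $t'$ as a tail. The consecutive pairs split into two families: those inside the suffix $b_0 \lhd^{\zeta_1} b_1 \cdots \lhd^{\zeta_s} b_s \lhd a_0 \lhd^{\varepsilon_1} a_1 \cdots = t'$ are non-reducible because $t'$ is reduced, and those inside the prefix $a_0 \lhd^{-\zeta_s} b_s \cdots \lhd^{-\zeta_1} b_1$ are non-reducible because a cancellation there would force some $\zeta_i = -\zeta_{i+1}$, again contradicting reducedness of $t'$. The two delicate junctions are the base of the whole term, where $a_0 \neq b_s$ is guaranteed by the condition $b_s \neq a_0$ in \eqref{E:FreeQA4} (for $s=0$ this reads $a_0 \neq b_0$), and the spot $\cdots \lhd^{-\zeta_1} b_1 \lhd b_0 \cdots$, where $b_1 \neq b_0$ holds since $t'$ is reduced. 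Thus the prefix ends in a letter ($b_1$, or $a_0$ when $s=0$) distinct from the base $b_0$ of $t'$, which is exactly the requirement of \eqref{E:FreeQA4} for $t'$ to be a tail of $red(t \lhd t')$. The main obstacle is not conceptual but the careful bookkeeping of signs and of the boundary cases ($s=0$ and the two junctions) in this reducedness verification; the group-qualgebra picture, though not literally valid in $FAQA_{\SS}$, serves as a safe guide for predicting the final normal form.
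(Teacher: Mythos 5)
Your proof is correct and follows essentially the same route as the paper: expand $t \lhd t'$ via the conjugation-expansion identity, let the tail hypothesis force the cancellation of the block $\lhd^{\varepsilon_1}a_1\cdots\lhd^{\varepsilon_r}a_r\lhd^{-\varepsilon_r}a_r\cdots\lhd^{-\varepsilon_1}a_1$ and of $a_0\wlhd a_0$, and then verify reducedness and the tail condition (including the $s=0$ and $s\ge 1$ cases) exactly as the paper does. The only difference is cosmetic: you justify the expansion identity by induction from the quandle axioms, whereas the paper takes it as read from its statement of Lemma~\ref{L:FreeQA}.
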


\begin{proof}
Writing~$t$ and~$t'$ as in~\eqref{E:FreeQA2} and~\eqref{E:FreeQA4}, one has
\begin{align*}
t \lhd t' &= (a_0 \lhd^{\varepsilon_1} a_1 \cdots \lhd^{\varepsilon_r} a_r) \lhd (b_0 \lhd^{\zeta_1} b_1 \cdots \lhd^{\zeta_s} b_s \lhd a_0 \lhd^{\varepsilon_1} a_1 \cdots \lhd^{\varepsilon_r} a_r)\\
&= a_0 \lhd^{\varepsilon_1} a_1 \cdots \lhd^{\varepsilon_r} a_r \lhd^{-\varepsilon_r} a_r \cdots \lhd^{-\varepsilon_1} a_1 \wlhd a_0 \lhd^{-\zeta_s} b_s \cdots \lhd^{-\zeta_1} b_1 \lhd b_0 \lhd^{\zeta_1} b_1 \cdots \lhd^{\varepsilon_r} a_r\\
&= a_0 \lhd^{-\zeta_s} b_s \cdots \lhd^{-\zeta_1} b_1 \lhd b_0 \lhd^{\zeta_1} b_1 \cdots \lhd^{\varepsilon_r} a_r.
\end{align*}
The last $\lhd$-term is reduced since~$t'$ is such and since $b_s \neq a_0$ (cf. the definition of a tail). Further, it has~$t'$ as a tail (the technical condition becomes $b_1 \neq b_0$ if $s \ge 1$ --- which follows from the definition of a reduced $\lhd$-term --- and $a_0 \neq b_0$ if $s = 0$ -- which is precisely $b_s \neq a_0$).
\end{proof}

Now, let us return to Relation~\eqref{E:FreeQA}. Both of its sides are written in a product form. Starting with its left-hand side, we shall show that after any number of ``positive'' applications of~\eqref{E:QAComm}, neither of the two $\lhd$-terms becomes $b$; the case of ``negative'' applications is treated similarly, and Lemma~\ref{L:FreeQA} then assures that the two sides of~\eqref{E:FreeQA} represent different elements of $FAQA_{\SS}$.

The first ``positive'' application of~\eqref{E:QAComm} gives
$$(b \lhd a) \op (a \lhd b) = (a \lhd b) \op (b \lhd a \wlhd b \lhd a \lhd b).$$
The $\lhd$-term $a \lhd b$ is a tail of $b \lhd a \wlhd b \lhd a \lhd b$, both of them being reduced.
Now, according to Lemma~\ref{L:FreeQA2} and the transitivity of the tail relation, the $\lhd$-term $a \lhd b$ will be a tail of all the $\lhd$-terms appearing after all further ``positive'' applications of~\eqref{E:QAComm}. Therefore, one never gets the $\lhd$-term~$b$, of which $a \lhd b$ is not a tail.

\bibliographystyle{alpha}
\bibliography{biblio}

\end{document}